\newtheoremstyle{note}{12pt}{12pt}{}{}{\bfseries}{.}{.5em}{}
\title{\LARGE\textbf{Invariant Manifolds for Non-differentiable Operators}}
\author{Marco Martens and Liviana Palmisano}
\newtheorem{theo}[equation]{Theorem}
\newtheorem{prop}[equation]{Proposition}
\newtheorem{defin}[equation]{Definition}
\newtheorem{rem}[equation]{Remark}
\newtheorem{cor}[equation]{Corollary}
\numberwithin{equation}{section}
\newtheorem{lem}[equation]{Lemma}
\newcommand{\N}{{\mathbb N}}
\newcommand{\Z}{{\mathbb Z}}
\newcommand{\R}{{\mathbb R}}
\renewcommand{\S}{{\mathbb S}^1}
\newcommand{\Ct}{{{\mathcal C}^3}}
\newcommand{\Cr}{{{\mathcal C}^r}}
\newcommand{\vers}{\longrightarrow}
\newcommand{\Cuno}{{{\mathcal C}^1}}
\renewcommand{\L}{{\mathscr L}}
\newcommand{\W}{{\mathscr W}}
\begin{document}
\maketitle
\author
\textcolor{blue}{}\global\long\def\sbr#1{\left[#1\right] }
\textcolor{blue}{}\global\long\def\cbr#1{\left\{  #1\right\}  }
\textcolor{blue}{}\global\long\def\rbr#1{\left(#1\right)}
\textcolor{blue}{}\global\long\def\ev#1{\mathbb{E}{#1}}
\textcolor{blue}{}\global\long\def\R{\mathbb{R}}
\textcolor{blue}{}\global\long\def\E{\mathbb{E}}
\textcolor{blue}{}\global\long\def\norm#1#2#3{\Vert#1\Vert_{#2}^{#3}}
\textcolor{blue}{}\global\long\def\pr#1{\mathbb{P}\rbr{#1}}
\textcolor{blue}{}\global\long\def\qq{\mathbb{Q}}
\textcolor{blue}{}\global\long\def\aa{\mathbb{A}}
\textcolor{blue}{}\global\long\def\ind#1{1_{#1}}
\textcolor{blue}{}\global\long\def\pp{\mathbb{P}}
\textcolor{blue}{}\global\long\def\cleq{\lesssim}
\textcolor{blue}{}\global\long\def\ceq{\eqsim}
\textcolor{blue}{}\global\long\def\Var#1{\text{Var}(#1)}
\textcolor{blue}{}\global\long\def\TDD#1{{\color{red}To\, Do(#1)}}
\textcolor{blue}{}\global\long\def\dd#1{\textnormal{d}#1}
\textcolor{blue}{}\global\long\def\eqdef{:=}
\textcolor{blue}{}\global\long\def\ddp#1#2{\left\langle #1,#2\right\rangle }
\textcolor{blue}{}\global\long\def\En{\mathcal{E}_{n}}
\textcolor{blue}{}\global\long\def\Z{\mathbb{Z}}
\textcolor{blue}{{} }

\textcolor{blue}{}\global\long\def\nC#1{\newconstant{#1}}
\textcolor{blue}{}\global\long\def\C#1{\useconstant{#1}}
\textcolor{blue}{}\global\long\def\nC#1{\newconstant{#1}\text{nC}_{#1}}
\textcolor{blue}{}\global\long\def\C#1{C_{#1}}
\textcolor{blue}{}\global\long\def\meas{\mathcal{M}}
\textcolor{blue}{}\global\long\def\cSpace{\mathcal{C}}
\textcolor{blue}{}\global\long\def\pspace{\mathcal{P}}

\begin{abstract} 
A general invariant manifold theorem is needed to study the topological classes of smooth dynamical systems. These classes are often invariant under renormalization. The classical invariant manifold theorem cannot be applied, because the renormalization operator for smooth systems is not differentiable and sometimes does not have an attractor. Examples are the renormalization operator for general smooth dynamics, such as unimodal dynamics, circle dynamics, Cherry dynamics, Lorenz dynamics, H\'enon dynamics, etc.
A general method to construct invariant manifolds of non-differentiable non-linear operators is presented. An application is that the $\mathcal C^{4+\epsilon}$ Fibonacci Cherry maps form a $\mathcal C^1$ codimension one manifold. 
\end{abstract}

\section{Introduction}
The classical invariant manifold theorem assures the existence of a submanifold preserved by a given map. The map has to satisfy two crucial conditions, namely, it has to be differentiable and it has to have an hyperbolic invariant set (for exampe an hyperbolic fixed point). The invariant manifold theorem has many applications in smooth hyperbolic dynamics where it supplies the framework for a complete topological description of the dynamics in phase space.

The invariant manifold theorem also plays an important role in the description of parameter space of families of dynamical systems. Namely, in one and two dimensional dynamics, the topology of systems can sometimes be characterized in terms of renormalization schemes. This is the case for circle diffeomorphisms, critical circle maps, unimodal maps, quadratic-like maps and dissipative H\'enon maps at the boundary of chaos. A topological class is a collection of systems which share a topological property. For example, systems which are conjugate on their attractors form topological classes. A topological class is often an invariant manifold of renormalization. The renormalization operator acts on parts of the corresponding spaces of systems. 

In the analytic setting of circle diffeomorphisms, unimodal maps, quadratic-like maps or strongly dissipative H\'enon-like maps at the boundary of chaos, the classical invariant manifold theorem can be applied and it assures that many topological classes are indeed finite codimension analytic manifolds. In these contexts the renormalization operators are indeed differentiable and have hyperbolic attractors.

One expects that many topological classes are also smooth submanifolds in the context of smooth systems,  like the ones mentioned before, as well as Lorenz dynamics, H\'enon dynamics, Cherry dynamics, etc.

Unfortunately, in the smooth context, the renormalization operators are not differentiable and sometimes they don't have attractors at all. The only results in this direction so far has been realized for the particular case of smooth unimodal dynamics, see \cite{Davie, dFdMP}. In this case the renormalization operator is not differentiable, however it has an hyperbolic attractor which is part of the space of analytic maps. The convergence of renormalization towards the space of analytic systems allows to extend a topological class of analytic unimodal maps into the corresponding topological class of smooth unimodal maps. 

Anticipating that renormalization will be a powerful tool to describe the topological classes of dynamical systems in many different contexts, one would like to have a general invariant manifold theorem which gives a method to prove that the topological classes are smooth manifolds also when the renormalization operator is not differentiable, like in smooth dynamics and when the renormalization does not have an attractor, like in Fibonacci unimodal dynamics, \cite{MilnorLyubic}, Lorenz dynamics, \cite{MW} and Cherry dynamics, \cite{5aut, P1, P2}. The main theorem presented here, namely Theorem \ref{InvMan}, provides such a method. 

The method has two parts. The first one is very general and is the same in all different contexts. The second part has a quantitative aspect which depends on the specific setting. We illustrate it in the most difficult situation where the renormalization operator is not differentiable and does not have an attractor, namely in Fibonacci Cherry dynamics. A very similar situation occurs for Fibonacci unimodal dynamics and Lorenz dynamics. However, the method is applicable in much broader contexts, for example smooth one dimensional dynamics and H\'enon dynamics. One has to adapt the quantitative aspect of the method following the guidline given in the application to Cherry dynamics.

The two main ingredients of the classical invariant manifold theorem, such as the differentiability and the hyperbolicity of the operator are substituted by weaker versions: jump-out differentiability, see Definition \ref{def:jumpoutdiff} and topological hyperbolicity, see Theorem \ref{InvMan}.
One of the reason why the renormalization operator is not differentiable in the smooth context is that composition is not differentiable. We replace the space of systems by decomposed systems in which renomalization is jump-out differentiable and the jump-out derivative is described, see Section \ref{deco}. The construction of the invariant manifold uses the classical graph transform which is studied with the non conventional method of curve dynamics. Curve dynamics does rely on Lipschitz regularity instead of differentiability.  

The main theorem, Theorem \ref{InvMan}, is stated in the context of Cherry dynamics. Renormalization of Cherry dynamics is introduced in Section \ref{section:renorm}. This concerns the dynamics of circle maps with a flat spot and critical exponents greater than one. For those maps whose critical exponent is between one and two and the rotation number is the Fibonacci number, the renormalizations diverge and in particular the Fibonacci renormalization operator does not have an attractor. The specific degeneration of the renormalizations is quantified by three invariants given in Proposition \ref{superformula}. These invariants determine the specific form of the quantitative aspects in the Invariant Manifold Theorem \ref{InvMan}, which applied in this case gives the following. Refer to Theorem \ref{manifold}.
\paragraph{Theorem A.}\label{manifoldintr}The $\mathcal C^{4+\epsilon}$ Fibonacci Cherry maps form a $\Cuno$ codimension one  manifold.

\bigskip

The ingredients developed to construct the smooth Fibonacci class of Theorem A. can also be used to study one of the fundamental questions in low-dimensional dynamics whether two systems with the same topological properties have also 
the same geometrical properties.
More precisely, consider two dynamical systems defined by two functions $f$ and $g$ and suppose that there exists an 
homeomorphism $h$ which conjugates $f$ and $g$ on their attractors. Is $h$ a $\mathcal C^{1+\beta}$, $\beta>0$, diffeomorphism? 
Such a regularity of the conjugacy implies that the geometry of the two systems is rigid, it is not possible to modify it 
on asymptotical small scales. The fact that the conjugacy has any regularity is by itself very surprising. 
It tells that as soon the topology of a systems is known, the asymptotic small scale geometry is also determined.
The rigidity question has been studied for circle diffeomorphisms
\cite{H79} \cite{Yo}, critical circle homeomorphisms \cite{dFdM}, \cite{Y}, \cite{Y1}, \cite{GdM}, \cite{GMdM}, unimodal maps
\cite{Lan}, \cite{S}, \cite{Mc}, \cite{Lyu1}, \cite{Lyu2}, \cite{dMP}, \cite{dFdMP}, 
circle maps with breakpoints \cite{KT1}, \cite{KK14} and for Kleinian groups \cite{Mo}. 

In all these cases the topological conjugacy is indeed smooth, $\mathcal C^{1+\beta}$, $\beta>0$. In the context of Fibonacci Cherry dynamics one encounter a different behavior, also detected in unimodal Fibonacci dynamics, see \cite{MilnorLyubic}. Given a Fibonacci Cherry map $f$ with critical exponent bigger than one and smaller than two, the smoothness of the conjugacy is determined by the three geometrical invariants, $C(f)=\left(C_u(f), C_-(f),C_s(f)\right)$. Refer to Theorem \ref{LM}. 

\paragraph{Theorem B.} Two $\mathcal C^{4+\epsilon}$ Fibonacci Cherry maps, $f$ and $g$ with the same critical exponent bigger than one and smaller than two,  are $\mathcal C^{1+\beta}$, $\beta>0$, conjugate if and only if the three geometrical invariants are the same, $C(f)=C(g)$.

\bigskip

Different phenomena concerning rigidity of systems has been detected also for strongly dissipative H\'enon-like maps, see \cite{CLM05, ML} and Lorenz maps, see \cite{MW}. A new rigidity conjecture, taking into account the recent phenomena detected for one and two dimensional dynamical systems, has been formulated in \cite{MPW}. 
\paragraph {Standing notation.} Let $\alpha_n$ and $\beta_n$ be two sequences of positive numbers. We say that  $\alpha_n$ is of the order of $\beta_n$ if there exists an uniform constant $K>0$ such that, for $n$ big enough  $\alpha_n<K \beta_n$. We will use the notation $\alpha_n=O(\beta_n).$ Moreover we denote by $[a, b] = [b, a]$ the shortest interval
between $a$ and $b$ regardless of the order of these two points. The length of that interval
in the natural metric will be denoted by $\left|[a , b]\right|$. 
\paragraph{Acknowledgements.}
The authors would like to thank IMPAN for its hospitality. The work was initiated and mostly developed at IMPAN. The first author was partially supported by the NSF grant 1600554 and the second author was partially supported by the Leverhulme Trust through
the Leverhulme Prize of C. Ulcigrai and by the Trygger foundation, Project CTS
17:50.
\section{Smooth invariant manifolds} 
The classical invariant manifold theorem assures the existence of a submanifold preserved by a given map. The map has to satisfy two crucial conditions, namely, it has to be differentiable and it has to have an hyperbolic invariant set. Here a general invariant manifold theorem is given. The two crucial conditions needed for the classical invariant manifold theorem are substituted by weaker ones: jump-out differentiability and topological hyperbolocity, see Theorem \ref{InvMan}. 

\subsection{Jump-out differentiability}
\begin{defin}\label{def:jumpoutdiff}
Let $\left(B_0, \|\cdot\|_0\right)$ and $\left(B_1, \|\cdot\|_1\right)$ be two normed spaces such that $B_1\subset B_0$ and $\|b\|_0\leq\|b\|_1$ for all $b\in B_1$. Let $U\subset B_1$ be an open subset and let $$F:U\subset B_1\to B_1\subset B_0.$$ 
We say that $F$ is jump-out-differentiable if, for every $p\in U$, $$F:U\subset B_1\to B_0$$ is differentiable in $p$. Moreover the derivative $DF_p$ extends to a bounded operator $$DF_p:B_0\to B_0,$$ and  $DF_p$ depends continuously on $p$.
\end{defin}
\begin{rem}
The word "jump-out-differentiability" describes the following situation. The differentiable map $F:U\to B_0$ is not necessarily differentiable as map $F:U\to B_1$, although $F(U)\subset B_1$. 
The derivative jumps-out $B_1$, namely for some $p\in B_1$, $\text{Image} (DF_p(B_1))\not\subset B_1$. Observe that in finite dimensional vector spaces all norms are equivalent. As consequence the notion of jump-out-differentiability and differentiability are in fact equivalent when the dimension of $B_0$ is finite. The jump-out-differentiability is purely an infinite dimensional phenomenon.  

The following example will illustrate jump-out differentiability. It is similar to what we will encounter when discussing renormalization in Section \ref{jumpoutdiff}. Let $\eta\in C^1(\R,\R)$ and $Z:\R\to C^1(\R,\R)$ defined by
$$
Z(t)(x)=t\eta(tx).
$$
Then $Z$ is not differentiable. However,
$$
Z:\R\to C^0(\R,\R)
$$
is differentialble with derivative $DZ_t: \R\to C^0(\R,\R)$ given by
$$
DZ_t(x)=txD\eta(tx)+\eta(tx).
$$
Observe that
$$
\text{Image}(DZ_t)\not\subset C^1(\R,\R).
$$
\end{rem}
\begin{rem}\label{chainrule}
The chain rule holds for jump-out differentiable maps. More in detail, the composition of two jump-out differentiable maps $F$ and $G$ is again jump-out differentiable and the extension of the derivative of the composition is the composition of the extended derivatives of $F$ and $G$. 
\end{rem}
\subsection{Cone field}
In the following we introduce a cone field which will be used to study the graph transform in the proof of the main theorem. The role of the cone field here is the same as in the proof of the classical invariant manifold theorem. Observe that the cone field in Definition \ref{degconefield} is degenerating when $y$ goes to infinity. The form of the cone is inspired by the specific degeneration of renormalization of Fibonacci Cherry dynamics. This specific choice of the cone field is one of the quantitative aspect of the method discussed in the introduction which has to be adapted in other applications.

We fix two normed spaces $\left(B_0, \|\cdot\|_0\right)$ and $\left(B_1, \|\cdot\|_1\right)$ with $B_1\subset B_0$ and $\|b\|_0\leq\|b\|_1$ for all $b\in B_1$.  If $\left(B, \|\cdot\|\right)$ is a normed space then
 we will use the norm $|p|=|y|+\|b\|$ with $p=(y,b)\in \R\times B$ on $\R\times B$.
Moreover, we fix $\theta>0$, $0<\kappa<1$, we chose an open set  $U\subset B_1$ and we define a cone field on $\R\times B_0$.

\begin{defin}\label{degconefield}
Let $p=(y,b)\in \R\times B_0$. The cone at $p$ is defined as
$$C_p=\left\{(\Delta y,\Delta b)\in \R\times B_0 | \text{ }\theta |y|^{\kappa}\|\Delta b \|_0 < | \Delta y |\right\}.$$ 
\end{defin}

\subsection{Almost horizontal curves and almost vertical graps}
\begin{defin}
An almost horizontal curve $\gamma$ is the graph of a continuous function $\hat\gamma:[t_-,t_+]\subset [0,1]\to B_1$ having the following properties:
\begin{itemize}
\item $\hat\gamma:[t_-,t_+]\to B_0$ is continuously differentiable,
\item for every $p\in\gamma$, the tangent space satisfies $T_p\gamma\subset C_p$.
\end{itemize} 
The set of almost horizontal curves is denoted by $\Gamma_0$.
\end{defin}
\begin{lem}\label{perturbation}
Let $\gamma\in\Gamma_0$ and $p_1,p_2\in\gamma$. Then there exist $V_1$ neighborhood of $p_1$ and $V_2$ neighborhood of $p_2$ such that, for all $p'_1\in V_1$ and $p'_2\in V_2$ there exists $\gamma'\in\Gamma_0$ such that $p'_1,p'_2\in\gamma'$.
\end{lem}
\begin{proof} Let $p'_1=(y_1, b_1)$ and $p'_2=(y_2, b_2)$ any two points. Let $\varphi$ be the affine function in $y$ with $\varphi(y_1)=b_1-\hat\gamma(y_1)$ and $\varphi(y_2)=b_2-\hat\gamma(y_2)$. Consider the function $$\hat\gamma'(y)=\hat\gamma(y)+\varphi(y).$$ Then $\gamma'=\text{graph}(\hat\gamma')$ is a smooth curve passing trough $p'_1$ and $p'_2$. 
Let $y$ be in the domain of $\hat\gamma$ and $\left(\Delta y, \Delta b\right)$ be a tangent vector to the curve $\gamma'$ in the point $\left(y, \hat\gamma'(y)\right)$. Then $\Delta b=\left(D\hat\gamma+D\varphi\right)\Delta y$. By continuity, there exists $\epsilon>0$, independent on $y$, such that $\theta y^{\kappa}\|D\hat\gamma\|_0\leq 1-\epsilon$. As consequence $$\theta y^{\kappa}\|\Delta b\|_0\leq \left(1-\epsilon +\theta y^{\kappa} \|D\varphi\|_0\right)|\Delta y|.$$ Notice that if $p'_1=p_1$ and $p'_2=p_2$, then $\varphi\equiv 0$. Hence, for $p'_1$ close enough to $p_1$ and $p'_2$ close enough to $p_2$,  $\theta y^{\kappa}\|\Delta b\|_0\leq |\Delta y|.$ As consequence $\gamma'\in\Gamma_0$.
\end{proof}

\begin{defin}
An almost vertical graph $\omega$ is the graph of a continuous function $\hat\omega: U\subset B_1\to\R$ such that, for all almost horizontal curve $\gamma\in\Gamma_0$, $\gamma\cap\omega$ is at most one point.  The set of almost vertical graphs is denoted by $\Omega_0$.
\end{defin}
\subsection{Invariant Manifold Theorem}
We are now ready to state our main theorem.
\begin{theo}[\bf Invariant Manifold Theorem]\label{InvMan}
Let $\left(B_0, \|\cdot\|_0\right)$ and $\left(B_1, \|\cdot\|_1\right)$ be two normed spaces with $B_1\subset B_0$ and $\|b\|_0\leq\|b\|_1$ for all $b\in B_1$. Let $U\subset B_1$ be open, $\partial_\pm\in \Omega_0$ be the almost vertical graphs corresponding to $\hat\partial_\pm$ with $1>\hat\partial_+(b)>\hat\partial_-(b)>0$, for $b\in U$ and 
$$
D=\{(y,b)\in [0,1]\times U| \hat\partial_-(b)\le y \le \hat\partial_+(b)\}.
$$
For all $p=(y,b)\in D$ with $F(p)=(\tilde y,\tilde b)$ assume the map
$$F:D\to \R\times B_1\subset \R\times B_0$$ 
has the following properties. 
\begin{itemize}
\item $F$ is jump-out-differentiable.
\item $F$ has derivatives of the form: there exist $E\neq 0$ and $0\leq\kappa<1$ such that, if $DF_p\left(\Delta y,\Delta b\right)=\left(\Delta\tilde y,\Delta\tilde b\right)$, then 
\begin{equation}\label{cond2}
\left\{\begin{matrix}
\Delta\tilde y&=&\frac{E_p}{y}\Delta y+O\left(\|\Delta b\|_0\right)\\
&&\\
\|\Delta\tilde b \|_0&=&O\left(\frac{1}{\tilde y^{\kappa}}|\Delta y|+\|\Delta b\|_0\right)
\end{matrix}\right.
\end{equation}
where ${1}/{E}\leq |E_p|\leq E$.
\end{itemize}
Let ${1}/{4}>\delta>0$ and let $U_{\delta}\subset U$ such that, for all $b\in U_{\delta}$, $\hat\partial_{+}(b)<\delta$. Let $D_{\delta}\subset D$ be the set of $(y,b)\in D$, with $b\in  U_{\delta}$. 

\begin{itemize}
\item $F:D_{\delta}\to\R\times B_1\subset \R\times B_0$ is topologically hyperbolic for all $\delta>0$, i.e.
\begin{eqnarray*}
\text{If } F(\hat\partial_+(b),b)&=&(\tilde{y},\tilde{b}) \text{ then }
\tilde{y}\le \hat\partial_-(\tilde{b}),\\
\text{If } F(\hat\partial_-(b),b)&=&(\tilde{y},\tilde{b})\text{ then }\tilde{y}\ge \hat\partial_+(\tilde{b}),\\
\text{If } F(y,b)&=&(\tilde{y},\tilde{b})\text{ then }\tilde{b}\in U_{\delta}.
\end{eqnarray*}
\item $F$ has vertical $\xi$-expansion, $\xi>0$, i.e.
\begin{eqnarray}
\label{cond3}
\frac{y}{\tilde y^{\kappa}}&\geq &\xi.
\end{eqnarray}
 
\item $F$ has $\eta$-dominating horizontal expansion, $\eta>0$,  i.e.
\begin{eqnarray}
\label{cond4}
\frac{y^2}{\tilde y^{\kappa}}&\leq &\eta.
\end{eqnarray}

\end{itemize}
For $\theta>0$, $\delta>0$, $\eta>0$ small enough and $\xi>0$ large enough, the invariant set $$W=\left\{p\in D | \forall n\in\N\text{ }F^n(p)\in D\right\}$$ is an almost vertical graph of a $\Cuno$ function $\hat\omega:U_{\delta}\to \R$.
\end{theo}
\begin{rem}
Lemma \ref{lemma1} states that the expansion along vertical graphs is of the order ${y}/{\tilde y^{\kappa}}$. This motivates the name of "vertical $\xi$-expansion". Lemma \ref{lemma1} and Lemma \ref{yexpansion} state that the expansion along almost horizontal curves dominates the expansion along almost vertical graphs by a factor of order ${y^2}/{\tilde y^{\kappa}}$. This motivates the name of "$\eta$-dominating horizontal expansion".
\end{rem}
\begin{rem}
Observe that $W$ is not necessarily the stable manifold of an hyperbolic fixed point, as in the most classical context. Theorem \ref{InvMan} will in fact be applied to the context of Cherry maps whose renormalization does not have a fixed point. $W$ will correspond to the class of maps with Fibonacci rotation number.  The renormalizations diverge to infinity. 
\end{rem}
From now on we assume the conditions of the Theorem \ref{InvMan}. The proof involves adjustments of parameters $\delta$, $\theta$, $\chi$, $\xi$ and $\eta$. When these adjustments are required in a proof of a lemma, then they are expressed in the statement of the lemma and assumed in the sequel. 
\begin{lem}\label{lemma1}
For $\theta>0$ small enough and $\xi>0$ large enough, there exists $K>0$ such that if $p=(y,b)\in D$ with $\tilde p=F(p)=(\tilde y, \tilde b)\in D$ and $\Delta q=\left(\Delta y, \Delta b\right)$ with $\Delta\tilde q=DF_p\left(\Delta q\right)\notin C_{F(p)}$, then 
\begin{enumerate}
\item $|\Delta y|\leq K y\|\Delta b\|_{0}$,
\item $|\Delta\tilde q|\leq K\left(\frac{y}{\tilde y^{\kappa}}\right)|\Delta q|$.
\end{enumerate}
\end{lem}
\begin{proof}
Let $\Delta\tilde q=\left(\Delta \tilde y, \Delta\tilde b\right)$. By (\ref{cond2}) and the fact that $\Delta\tilde q\notin C_{F(p)}$ we get
\begin{eqnarray*}
\frac{E_p}{ y}|\Delta y|+ O\left(\| \Delta b\|_0\right)\leq |\Delta\tilde y|< \theta\tilde y^{\kappa} \| \Delta\tilde b\|_0
= \theta\tilde y^{\kappa}O\left(\frac{1}{\tilde y^{\kappa}}|\Delta y|+\| \Delta b\|_0\right).
\end{eqnarray*}
As consequence, for $\theta>0$ small enough we get property $1$. For proving property $2$, it is enought to use (\ref{cond2}), (\ref{cond3}) and property $1$, namely
$$|\Delta\tilde q|\leq\frac{E_p}{y}|\Delta y|+O\left(\frac{1}{\tilde y^{\kappa}}|\Delta y|+\|\Delta_b\|_0\right)=O\left(\frac{y}{\tilde y^{\kappa}}\|\Delta b\|_0\right)=O\left(\frac{y}{\tilde y^{\kappa}}|\Delta q|\right).$$
\end{proof}
The following lemma states that the cone field $C_p$ is expanding and invariant.
\begin{lem}\label{yexpansion}
For ${1}/{4}>\delta>0$ small enough, the following holds. Let $p=(y,b)\in D$ and $\Delta q=(\Delta y, \Delta b)\in C_p$ with $\Delta \tilde q=DF_p(\Delta x)=(\Delta\tilde y, \Delta\tilde b)$, then $\Delta \tilde q\in C_{F(p)}$ and $$|\Delta\tilde y|\geq\frac{1}{2Ey}|\Delta y|\geq 2 |\Delta y|.$$ 
\end{lem}
\begin{proof}
By (\ref{cond2}) and by the fact that $\Delta x\in C_p$ we get 
$$|\Delta\tilde y|\geq \frac{1}{Ey}|\Delta y|+O\left(|\Delta b\|_{0}\right)=\frac{1}{Ey}|\Delta y|+O\left(\frac{1}{\theta y^{\kappa}}|\Delta y|\right)=\frac{1}{Ey}|\Delta y|\left(1+O\left(\frac{y^{1-\kappa}}{\theta}\right)\right).$$ The expansion estimate follows by taking $\delta>0$ small enough. Left is to show the cone invariance.
From $(\ref{cond2})$, the fact that $\Delta q\in C_p$ and the expansion estimate we get 
$$\theta \tilde y^{\kappa}\|\Delta\tilde b\|_0=O\left(1+\frac{\tilde y^{\kappa}}{y^{\kappa}}\right)|\Delta y|=O\left(y+{\tilde y^{\kappa}}{y^{1-\kappa}}\right)|\Delta\tilde y|.$$ The cone invariance follows by taking $\delta$ small enough. 
\end{proof}

We denote by $\Gamma$ the subset of $\Gamma_0$ of all almost horizontal curves having the additional property that $\gamma\cap\partial_-$ and $\gamma\cap\partial_+$ consist each of exactly one point. 
We denote by $\Omega$ the subset of $\Omega_0$ of all almost vertical graphs having the additional property that $\omega\subset D_{\delta}$. 
The next lemma states that the set of almost vertical graphs is invariant. This crucial property reduces the method to curve dynamics.
\begin{lem}\label{Fgamma}
If $\gamma\in\Gamma$, then $F(\gamma\cap D_{\delta})\in\Gamma$.
\end{lem}
\begin{proof}
Observe that, by the definition of $\Omega$, $\gamma\cap D_{\delta}$ is itself and almost horizontal curve, it only contains one component. Hence, we may assume that $\gamma\subset D_{\delta}$. We introduce the map $$F{\gamma}:[t_-,t_+]\ni t\mapsto F\left(\left(t,\hat\gamma(t)\right)\right)\subset B_1.$$
By Remark \ref{chainrule}, $F\gamma:[t_-,t_+]\to B_0$ is continuously differentiable and by the cone invariance, Lemma \ref{yexpansion}, for every $p\in\gamma$, $T_{F(p)}F\left(\gamma\right)=DF_p\left(T_p\gamma\right)\subset C_{F(p)}$. Suppose now that $F(\gamma)$ is not a graph. Then, there exists $\tilde p\in F(\gamma)$ such that the tangent vector of $F(\gamma)$ in $\tilde p$ is of the form $(0,\Delta b)$ which is not in the cone $C_{\tilde p}$. This contradict the cone invariance. We proved that $F(\gamma)\in\Gamma_0$. 
Because $F$ is topologically hyperbolic we have that $F(\gamma)\cap\partial_{\pm}\neq\emptyset$. Hence $F(\gamma)\in\Gamma$.
\end{proof}
\begin{cor}\label{Fgamma0}
If $\gamma\in\Gamma_0$, then $F(\gamma\cap D_{\delta})\in\Gamma_0$.
\end{cor}
\begin{cor}\label{Finj}
If $\gamma\in\Gamma$, then $F_{|\gamma\cap D_{\delta}}$ is injective.
\end{cor}
The proof of Theorem \ref{InvMan} is divided into $2$ steps inspired by the graph transform method. In the following $2$ subsections, discussing these steps, we assume the hypotheses of Theorem \ref{InvMan}.

\subsection{The graph transform}
In this section we introduce the graph transform. The invariant manifold of $F$ will be the fixed point of the graph transform. 

We define a distance $d$ on $\Omega$. Let $\omega_1,\omega_2\in\Omega$ and $\gamma\in\Gamma$. Let $p_1=(y_1, b_1)=\gamma\cap\omega_1$ and let $p_2=(y_2, b_2)=\gamma\cap\omega_2$.  We define the distance $$d\left(\omega_1,\omega_2\right)=\sup_{\gamma\in\Gamma}|y_1-y_2|\leq 1.$$

\begin{lem}
$\left(\Omega, d\right)$ is complete.
\end{lem}
\begin{proof}
Notice that the uniform distance on the functions $\hat\omega$ is bounded by $d$. As consequence, any Cauchy sequence $\omega_n$ in $\Omega$ has a limit $\omega$ which is the graph of a continuous function $\hat\omega$. Because $D_{\delta}$ is closed, $\hat\omega\in D_{\delta}$. Moreover, for any $\gamma\in\Gamma$, $\gamma\cap\omega\neq\emptyset$. It remains to  prove that this intersection consists of only one point. By contradiction, suppose there is a $\gamma\in\Gamma$ such that $\gamma\cap\omega\supset \left\{p_1,p_2\right\}$. By Lemma \ref{perturbation}, there exists $\tilde\gamma\in\Gamma$ such that, for $n$ large enough, $\tilde\gamma\cap\omega_n$ contains at least two points. This is a contradiction and hence $\omega\in\Omega$.
\end{proof}

We are ready to define the graph transform $T:\Omega\to \Omega$. Let $\omega\in\Omega$, $b\in U_{\delta}$ and $\gamma_b$ the (almost) horizontal curve at $b$, given by $\hat\gamma_b(t)=(t,b)$. By Lemma \ref{Fgamma}, we know that $F\gamma_b\in\Gamma$, hence $F\gamma_b\cap\omega=\left\{\tilde p\right\}$ and by Corollary \ref{Finj}, $F^{-1}(\omega)$ is the graph of a function. We call this graph ${T\omega}$.

\begin{lem}
 $T{\omega}\in\Omega$.
\end{lem}
\begin{proof}
Let $\gamma\in\Gamma$. We have to prove that $\gamma\cap T\omega$ is a point. By Lemma \ref{Fgamma}, $F\gamma\in\Gamma$, hence $F\gamma\cap\omega=\left\{\tilde p\right\}$. As a consequence, $$\gamma\cap T\omega=F^{-1}(\tilde p),$$ which is a unique point by Corollary \ref{Finj}.
\end{proof}
The graph transform $T:\Omega\to \Omega$ is defined by $\omega\mapsto T\omega$.
\begin{lem}
For $\delta>0$ small enough, $T$ is a contraction.
\end{lem}
\begin{proof}
 Let $\omega_1,\omega_2\in\Omega$ and $\gamma\in\Gamma$. Denote by $p_1=(y_1,b_1)=\gamma\cap T\omega_1$, by $p_2=(y_2,b_2)=\gamma\cap T\omega_2$, by $\tilde p_1=(\tilde y_1,\tilde b_1)=F\gamma\cap \omega_1$ and by $\tilde p_2=(\tilde y_2,\tilde b_2)=F\gamma\cap \omega_2$.
 By Lemma \ref{yexpansion}, for $\delta$ small enough,  $$|y_1-y_2|\leq {2Ey} |\tilde y_1-\tilde y_2| \leq\frac{1}{2} |\tilde y_1-\tilde y_2|,$$ and as a consequence 
 $$d\left(T\omega_1,T\omega_2\right)\leq\frac{1}{2} d\left(\omega_1,\omega_2\right).$$
 \end{proof}
 The graph transform is then a contraction on a complete space. It follows the next proposition:
 \begin{prop}
 $T$ has a unique fixed point $\omega^*\in\Omega$.
 \end{prop}
\begin{lem}\label{omegaW}
 Let $\omega^*$ be the fixed point of $T$. Then $$\omega^*=W=\left\{p\in D_{\delta} | \forall n\in\N\text{ }F^n(p)\in D_{\delta}\right\}.$$
\end{lem}
\begin{proof}
Take $p\in\omega^*\subset D_{\delta}$. Because $\omega^*$ is a fixed point for $T$, $\omega^*=F^{-1}\omega^*$. As a consequence, $F(p)\in\omega^*\subset D_{\delta}.$ Hence $\omega^*\subset W$. Take now $p=(y,b)\in W$. Notice that, for all $n\in\N$, $p_n=F^n(p)=(y_{n},b_n)\in D_{\delta}$. Let $\hat p=(\hat y,b)\in \omega^*\subset D_{\delta}$ with $\hat y=\omega^*(b)$. Observe that, because $\omega^*$ is a fixed point for $T$, for all $n\in\N$, $\hat p_n=F^n(\hat p)=(\hat y_n,b_n)\in\omega^*\subset D_{\delta}$. In particular $\hat y_n\leq \frac{1}{4}$. We want to prove that $p=\hat p\in\omega^*$. Suppose not. Let $\gamma$ be the line segment between $p$ and $\hat p$ and let $\gamma_n=F^n\gamma$. Observe that $\gamma_n\subset D_{\delta}$.  By Lemma \ref{yexpansion}, for all $n\in\N$, $$|y_n-\hat y_n|\geq 2^n |y-\omega^*(b)|.$$ 
We get to a contradiction, noticing that, for $n$ big enough, either $p_n$ or $\hat p_n$ is not in $D_{\delta}$. 
 \end{proof}

\subsection{Differentiability}
In this section we prove that $W=\omega^*$ is a $\Cuno$ codimension one manifold. This will conclude the proof of Theorem \ref{InvMan}.
\bigskip

A {\it plane} is a codimension one subspace of $\R\times B_0$ which is a the graph of a functional $b^*\in \text{Dual}(B_0)$. We identify the plane with the corresponding functional $b^*$. In particular, $\text{Dual}(B_0)$ is the space of planes and carries a corresponding complete distance $d^*_{B_0}$. Fix a constant $\chi>0$.
\begin{defin}
Let $p=(y,b)\in\omega^*$. A plane $V_p$ is admissible for $p$ if it has the following properties:
\begin{enumerate}
\item if $\left(\Delta y, \Delta b\right)\in V_p$ then $|\Delta y|\leq
\chi y\|\Delta b\|_0$,
\item $V_p$ depends continuously on $p$ with respect to $d^*_{B_0}$. 
\end{enumerate}
The set of planes admissible for $p$ is denoted by $\text{Dual}_p(B_0)$. A plane field is a continuous assignment $V :p\mapsto V_p$ where $V_p$ is admissible for $p$. The set of plane fields is denoted by $\Omega_1$.
\end{defin}
\begin{rem}\label{intlinecone}
Observe that, for $\delta>0$ small enough, if $p\in\omega^*$ and $V_p$ is an admissible plane for $p$, then for all straight lines $\gamma$ with direction in $C_p$, $\gamma\cap V_p$ is a unique point.
\end{rem}
We fix $p=(y,b)\in\omega^*$ and we define a distance on $\text{Dual}_p(B_0)$. Let $V_p,V'_p\in \text{Dual}_p(B_0)$ and let $\gamma$ be a straight line with direction in $C_p$ not necessarily passing trough the origin.  Let $\delta>0$ be small enough such that Remark \ref{intlinecone} can be applied. Denote by $\Delta q=(\Delta y,\Delta b)=V_p\cap\gamma$, $\Delta q'=(\Delta y',\Delta b')=V'_p\cap\gamma$. The distance $d_{1,p}$ on $\text{Dual}_p(B_0)$ is defined as
$$d_{1,p}\left(V_p,V'_p\right)=\sup_{\gamma}\frac{|\Delta y-\Delta y'|}{\min\left\{|\Delta q|,|\Delta q'|\right\}}.$$
This is a complete metric. The distance $d_1$ on $\Omega_1$ is the corresponding uniform distance. Namely, if $V,V'\in \Omega_1$ then
$$d_1\left(V,V'\right)=\sup_{p}d_{1,p}\left(V_p,V'_p\right).$$

\begin{lem}
For $\delta>0$ small enough, $d_1$ is a distance. 
\end{lem}
\begin{proof}
We only have to show that $d_1$ is bounded. We use the notations from the definition of $d_1$. Without lose of generality we assume $|\Delta q|\leq|\Delta q'|$ and we estimate ${|\Delta y|+|\Delta y'|}/{|\Delta q|}$. By the definition of $V_p$ and $V'_p$ we get
\begin{equation}\label{admisplane}
|\Delta y|+|\Delta y'|\leq \chi y \left(\|\Delta b\|_0+\|\Delta b'\|_0\right).
\end{equation}
Define $\Delta=\Delta b'-\Delta b$. Because the direction of $\gamma$ is in the cone $C_p$ and by (\ref{admisplane}), we get
\begin{equation*}
\|\Delta\|_0\leq\chi\frac{y^{1-\kappa}}{\theta }\left(\|\Delta b\|_0+\|\Delta b'\|_0\right),
\end{equation*}
and 
\begin{equation*}
\|\Delta b'\|_0\leq \|\Delta b\|_0 +\chi\frac{y^{1-\kappa}}{\theta }\left(\|\Delta b\|_0+\|\Delta b'\|_0\right).
\end{equation*}
As a consequence, for $\delta$ small enough,
\begin{equation}\label{admisplane1}
\frac{1}{2}\leq\frac{\|\Delta b'\|_0}{\|\Delta b\|_0}\leq 2.
\end{equation}
Observe that, for $\delta$ small enough,
\begin{equation}\label{admisplane2}
|\Delta q|\geq\frac{1}{2}\|\Delta b\|_0,
\end{equation}
and by (\ref{admisplane}), (\ref{admisplane1}) and (\ref{admisplane2}) we get
$$\frac{|\Delta y|+|\Delta y'|}{|\Delta q|}\leq 2.$$
\end{proof}
\begin{lem}
For $\delta>0$ small enough, $\left(\Omega_1, d_1\right)$ is complete.
\end{lem}
\begin{proof}
Let $V(n)\in\Omega_1$ be a $d_1$-Cauchy sequence. Then, for all $p$, $V_p(n)$ is a $d_{1,p}$-Cauchy sequence. Hence $V_p(n)$ converges to $V_p$ in $\text{Dual}_p(B_0)$. Let $V:\omega^*\to \text{Dual}(B_0)$ be the point-wise limit. Observe now that, for $\delta$ small enough, $$d^*_{B_0}\left(V_p,V'_p\right)\leq 2 d_{1,p}\left(V_p,V'_p\right),$$ for all $p\in\omega^*$. Hence $V(n)$ is also a Cauchy sequence in the space of continuous functions $\omega^*\to \text{Dual}(B_0)$ carrying the uniform distance corresponding to $d^*_{B_0}$. This space is complete, hence $V$ is continuous.
\end{proof}
The next step is to define the plane field transform $T_1: \Omega_1\to \Omega_1$. Let $V\in \Omega_1$ and $p\in\omega^*$, then  
$$T_1V_p=DF^{-1}_p\left(V_{F(p)}\right).$$
\begin{lem}
For $\delta>0$ small enough and for $\chi>0$ large enough, $T_1V\in\Omega_1$, for all $V\in\Omega_1$.
\end{lem}
\begin{proof}
We prove that, for all $p\in\omega^*$, $T_1V_p$ is an admissible plane for $p$. Consider the vectors $\left(\Delta y, \Delta b\right)\in T_1V_p$ and $DF_p\left(\Delta y, \Delta b\right)=(\Delta\tilde y, \Delta\tilde b )\in V_{F(p)}$. From (\ref{cond2}) and from the fact that $V_{F(p)}$ is admissible in ${F(p)}$ we get $$\frac{1}{Ey}|\Delta y|+ \chi\tilde y^{1-\kappa}O\left(|\Delta y|\right)\leq \chi\tilde y O\left(\|\Delta b\|_0\right)+O\left(\|\Delta b\|_0\right).$$ As consequence, for $\delta>0$ small enough and $\chi>0$ large enough, we get 
$$|\Delta y|\leq \chi y \|\Delta_b\|_0.$$ In particular $DF_p^{-1}\left(V_{F(p)}\right)$ is a codimension one subspace and this implies that $DF_p$ is transversal to $V_{F(p)}$. This transversality gives that $T_1V_p$ depends continuously on $p$.
\end{proof}
\begin{lem}
For $\eta>0$ small enough, $T_1$ is a contraction.
\end{lem}
\begin{proof}
We use the notations from the definition of $d_1$ and we denote by $\Delta\tilde q=DF_p\left(\Delta q\right)=\left(\Delta\tilde y, \Delta\tilde b\right)$ and by $\Delta\tilde q'=DF_p\left(\Delta q'\right)=\left(\Delta\tilde y', \Delta\tilde b'\right)$.  By Lemma \ref{lemma1} and Lemma \ref{yexpansion} we get 
$$\frac{|\Delta y'-\Delta y|}{\min\left\{|\Delta q|, |\Delta q'|\right\}}= O\left(\frac{y^2}{\tilde y^{\kappa}}\right)\frac{|\Delta\tilde y'-\Delta\tilde y|}{\min\left\{|\Delta\tilde q|, |\Delta\tilde q'|\right\}}\leq \frac{1}{2}d_{1, F(p)}\left(V_{F(p)},V'_{F(p)}\right),$$ when $\eta$ is small enough. Hence $d_{1}\left(T_1V, T_1V'\right)\leq\frac{1}{2}d_{1}\left(V, V'\right).$
\end{proof}
\begin{prop}
For $\delta>0$ small enough and $\chi>0$ large enough, $T_1$ has a unique fixed point $V^*\in\Omega_1$.
\end{prop}

Although $V^*_p$ is a subspace of $\R\times B_0$, in the sequel we abuse the notation by denoting the set $\left\{p+v |v\in V^*_p\right\}$ also by $V^*_p$.
Let $p\in\omega^*$ and take $\gamma\in\Gamma$ close enough to $p$ such that $\gamma\cap\omega^*=\left\{p+\Delta q=p+\left(\Delta y,\Delta b\right)\right\}$ and $\gamma\cap V^*_p=\left\{p+\Delta q'=p+\left(\Delta y',\Delta b'\right)\right\}$. We define by $$A=\sup_{p}\limsup_{\gamma\to p}\frac{|\Delta y-\Delta y'|}{|\Delta q|}.$$
Observe that $V^*$ describes the tangent bundle of $\omega^*$ if and only if $A=0$.
\begin{lem}
For $\delta>0$ small enough, $A\leq 1$.
\end{lem}
\begin{proof}
Let $p=(y,b)$ and denote by $\Delta=\Delta b'-\Delta b$. From the definition of admissible planes we get 
\begin{equation}\label{fin1}
|\Delta y'|\leq\chi y\left(\|\Delta b'\|_0\right),
\end{equation}
and by the definition of almost horizontal lines,
\begin{equation}\label{fin2}
\frac{1}{2}\theta y^{\kappa}\|\Delta \|_0\leq |\Delta y|+ |\Delta y'|,
\end{equation}
when $\gamma$ is close enough to $p$. Consider the straight line $L$ going to $p$ and $p+\Delta q$. $L$ intersect $\omega^*$ in two points. Hence, $L$ is not an almost horizontal curve and $\Delta q\notin C_p$. As consequence, when $\gamma$ is close enough to $p$.
\begin{equation}\label{fin3}
|\Delta y|\leq 2\theta y^{\kappa}\|\Delta b \|_0.
\end{equation}
By (\ref{fin1}), (\ref{fin2}) and (\ref{fin3}) we get 
$$\|\Delta \|_0\leq \frac{2\theta+\chi y^{1-\kappa}}{\frac{1}{2}\theta-\chi y^{1-\kappa}}\|\Delta b\|_0,$$ and 
\begin{equation}\label{fin4}
\|\Delta  b'\|_0\leq 6\|\Delta b\|_0,
\end{equation}
when $\delta>0$ is small enough. By (\ref{fin1}), (\ref{fin3}) and (\ref{fin4}) we get 
$$\frac{|\Delta y-\Delta y'|}{|\Delta q|}\leq\frac{|\Delta y|+|\Delta y'|}{\|\Delta b \|_0+|\Delta y|}\leq 12\chi y+4\theta y^{\kappa}\leq 1,$$ for $\delta>0$ is small enough.
\end{proof}
Lemma \ref{omegaW} and the following proposition conclude the proof of Theorem \ref{InvMan}. Namely $W=\omega^*$ is a $\Cuno$ manifold.
\begin{figure}[h]\label{FigP}
\centering
\includegraphics[width=0.8\textwidth]{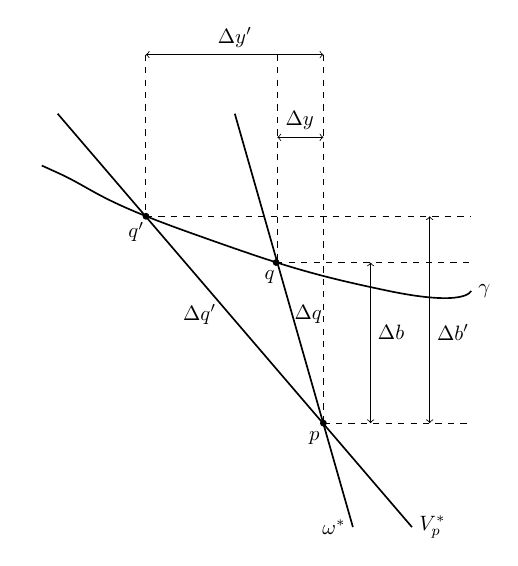}
\caption{Notation for the proof of Proposition \ref{tgV}}
\end{figure}
\begin{figure}[h]\label{FigI}
\centering
\includegraphics[width=0.8\textwidth]{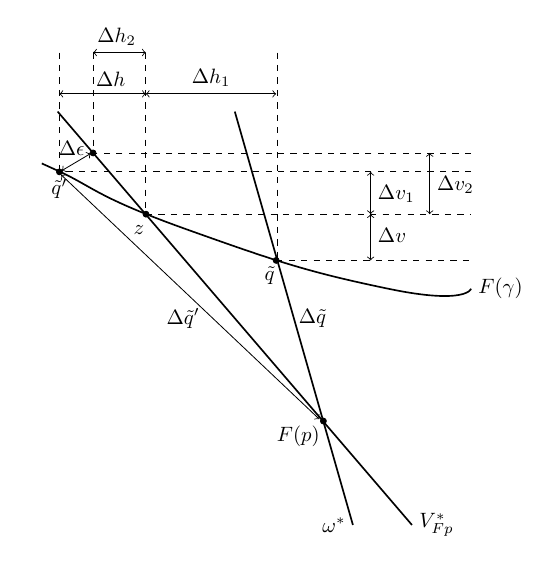}
\caption{Notation for the proof of Proposition \ref{tgV}}
\end{figure}
\begin{prop}\label{tgV}
For $\delta>0$ and $\eta>0$ small enough, each point $p\in \omega^*$ has a tangent plane $T_p\omega^*=V^*_p$. 
\end{prop}
\begin{proof}
One has to show that $A=0$. We use the notation from the definition of $A$ and we introduce the following.
\begin{eqnarray*}
F(p)&=&(\tilde y, \tilde b),\\
F\left(\gamma\right)\cap\omega^*&=&\tilde q=F(p)+\Delta\tilde q=F(p)+\left(\Delta\tilde y,\Delta\tilde b\right),\\ F\left(\gamma\right)\cap V^*_{F(p)}&=&z=F(p)+\Delta z,\\
F\left(q'\right)&=&\tilde q'=F(p)+\Delta\tilde q'=F(p)+\left(\Delta\tilde y',\Delta\tilde b'\right),\\
z-\tilde q&=&\left(\Delta h_1,\Delta v\right),\\
\tilde q'-z&=&\left(\Delta h,\Delta v_1\right),\\
\Delta\tilde q'&=&DF_p\left(\Delta q'\right)+\Delta\epsilon,\\
DF_p\left(\Delta q'\right)-\Delta z&=&\left(\Delta h_2,\Delta v_2\right).
\end{eqnarray*} 
For curves $\gamma\in\Gamma$ close enough to $p$ and by Lemma \ref{lemma1}, the differentiability of $F$, Lemma \ref{yexpansion}, $V^*_{F(p)}$ is admissible, $\gamma\in\Gamma$ and $F(\gamma)\in\Gamma$, we get
\begin{eqnarray}\label{L1}
|\Delta\tilde q|&=&O\left(\frac{y}{\tilde y^{\kappa}}\right)|\Delta q|,\\\label{L2}
|\Delta\epsilon|&=&o\left(|\Delta q'|\right),\\ 
\label{L3}
|\Delta h_1|+|\Delta h|&\geq&\frac{1}{3y}|\Delta y-\Delta y'|,\\
\label{L7}
|\Delta h_2|&\leq& 2\chi\tilde y\|\Delta v_2\|_0,\\
\label{L4}
\frac{1}{2}\theta y^{\kappa}\|\Delta b'-\Delta b\|_0 &\leq& |\Delta y-\Delta y'|,\\
\label{L5}
\frac{1}{2}\theta \tilde y^{\kappa}\|\Delta v_1\|_0 &\leq& |\Delta h|,\\
\label{L6}
\|\Delta v_2-\Delta v_1\|_0 &\leq& |\Delta\epsilon |.
\end{eqnarray} 
By (\ref{L7}), (\ref{L6}) and (\ref{L5}) we have 
$$|\Delta h|\leq |\Delta\epsilon|+2\chi \tilde y\left(\frac{2}{\theta\tilde y^{\kappa}}|\Delta h|+|\Delta\epsilon|\right).$$ Hence,  for $\delta>0$ small enough,
\begin{equation}\label{L8}
|\Delta h|\leq 2|\Delta\epsilon|.
\end{equation}
By (\ref{L2}) and  (\ref{L4}), we have 
\begin{equation*}
|\Delta\epsilon|=o\left(|\Delta q|+4\left(1+\frac{1}{\theta y^{\kappa}}\right)|\Delta y-\Delta y'|\right)=o\left(|\Delta q|\left(1+4\left(1+\frac{1}{\theta y^{\kappa}}\right) A\right)\right).
\end{equation*}
Hence 
\begin{equation}\label{L9}
|\Delta\epsilon|=o\left(|\Delta q|\right).
\end{equation}
By (\ref{L3}), (\ref{L1}), (\ref{L8}) and (\ref{L9}) we get
$$
\frac{|\Delta y-\Delta y'|}{|\Delta q|}\leq O\left(3\frac{y^2}{\tilde y^{\kappa}}\right)\frac{|\Delta h_1|}{|\Delta\tilde q|}+6y\frac{|\Delta\epsilon|}{|\Delta q|}=\leq O\left(3\frac{y^2}{\tilde y^{\kappa}}\right)\frac{|\Delta h_1|}{|\Delta\tilde q|}+o(1).
$$
As consequence, 
$$
\limsup_{\gamma\to p}\frac{|\Delta y-\Delta y'|}{|\Delta q|}\leq O\left(3\frac{y^2}{\tilde y^{\kappa}}\right) A.
$$
Finally, from (\ref{cond4}) we get $A=0$ for $\eta$ small enough.
\end{proof}
\begin{lem}\label{pullbacklemma}
Let  $W\subset\R\times B_1$ with the following properties:
\begin{itemize}
\item[-] $W$ is a $\Cuno$ codimension one manifold,
\item[-] for all $p\in W$, the tangent space $T_pW$ extends to a plane in $\R\times B_0$ also denoted by $T_pW$,
\item[-] the dependence of $p$ to $T_pW$ is continuous.
\end{itemize}
Let $V$ be a normed vector space and $H:V\to\R\times B_1$ be such that
\begin{itemize}
\item[-] $H:V\to\R\times B_0$ is continuously differentiable,
\item[-] if $H(v)\in W$, then $DH_v\pitchfork T_{H(v)}W$.
\end{itemize}
Then $H^{-1}(W)$ is a $\Cuno$ codimension one manifold.
\end{lem}
\begin{proof}
Let $v\in H^{-1}(W)$ and $p=H(v)$. Observe that $DH^{-1}_v(T_{p}W)$ is a codimension one subspace which depends continuously on $v$. We denote it by $T_v H^{-1}(W)$. By a similar argument as in the proof of Proposition \ref{tgV}, it follows that $T_v H^{-1}(W)$ is the tangent space at $v$ to $H^{-1}(W)$. As consequence $H^{-1}(W)$ is a $\Cuno$ codimension one manifold.
\end{proof}

\begin{rem} Observe that Lemma \ref{pullbacklemma} is not the usual pull back lemma. Namely, $H:V\to\R\times B_0$ is continuously differentiable but $W\subset R\times B_0$ has infinite codimension. It has codimension one  in $\R\times B_1$.
\end{rem}

\section{Renormalization of Cherry dynamics}
\label{section:renorm}
This section presents the dynamical systems of interest, namely the circle maps with a flat interval and the action of the renormalization operator.  Because of their close connection with Cherry flows, we call circle maps with a flat interval, Cherry maps.
\subsection{The class of functions}
We fix $1<\ell<2$ and we denote by $\Sigma^{(X)}$ the simplex
\begin{equation*}
\Sigma^{(X)}=\{(x_1,x_2,x_3,x_4,s)\in\mathbb R^5 | x_1<0<x_3<x_4<1, 0<x_2<1\text{ and } 0<s<1 \},
\end{equation*}
by $\text{ Diff }^r([0,1])$ the space of $\Cr$, $r\ge 2$, orientation preserving diffeomorphisms of $[0,1]$.
The space of $\Cr$ circle maps with a flat interval is denoted by 
$$\L^{(X,r)}=\Sigma^{(X)}\times \text{Diff }^r([0,1])\times \text{Diff }^r([0,1])\times \text{Diff }^r([0,1]).$$ 
The space $\L^{(X,r)}$ is equipped by a distance which is the sum of the usual distances: the euclidian distance on $\Sigma^{(X)}$ 
and the sum of the $\Cr$ distance on $\text{Diff }^r([0,1])$. Usually we will suppress the index indicating the smoothness of the maps and simply use the notation $\L^{(X)}$. A point $f=(x_1,x_2,x_3,x_4, s,\varphi,\varphi^{l},\varphi^{r})\in \L^{(X)}$ represents the following interval map $f:[x_1,1]\to[x_1,1]$:

\begin{center}
\begin{equation}\label{eqfun}
f(x):=\left\{ 
    \begin{aligned}   
   &   (1-x_2)q_s\circ\varphi\left(1-\frac{x}{x_1}\right)+x_2 & \text{ if } x\in[x_1,0) \\
   &   x_1\left(\varphi^{l}\left(\frac{x_3-x}{x_3}\right)\right)^{\ell} & \text{ if } x\in[0,x_3] \\
   &    0 & \text{ if } x\in(x_3,x_4)\\
&x_2\left(\varphi^{r}\left(\frac{x-x_4}{1-x_4}\right)\right)^{\ell} & \text{ if } x\in[x_4,1] \\       
    \end{aligned}
\right.
\end{equation}
\end{center}
and $q_s:[0,1]\to [0,1]$ is a diffeomorphic part of $x^{\ell}$ parametrized by $s\in (0,1)$, namely $$q_s(x)=\frac{\left[(1-s)x+s\right]^{\ell}-s^{\ell}}{1-s^{\ell}}.$$
The meaning of the parts of $f$ is illustrated in figures \ref{Fig1}, \ref{Fig1.1}, \ref{Fig1.2} and \ref{Fig1.3}. The role of $q_s$ will become clear in the study of the asymptotical behavior of the renormalization operator, see Section \ref{asymrenormalization}.
\begin{figure}[h]
\centering
\includegraphics[width=0.5\textwidth]{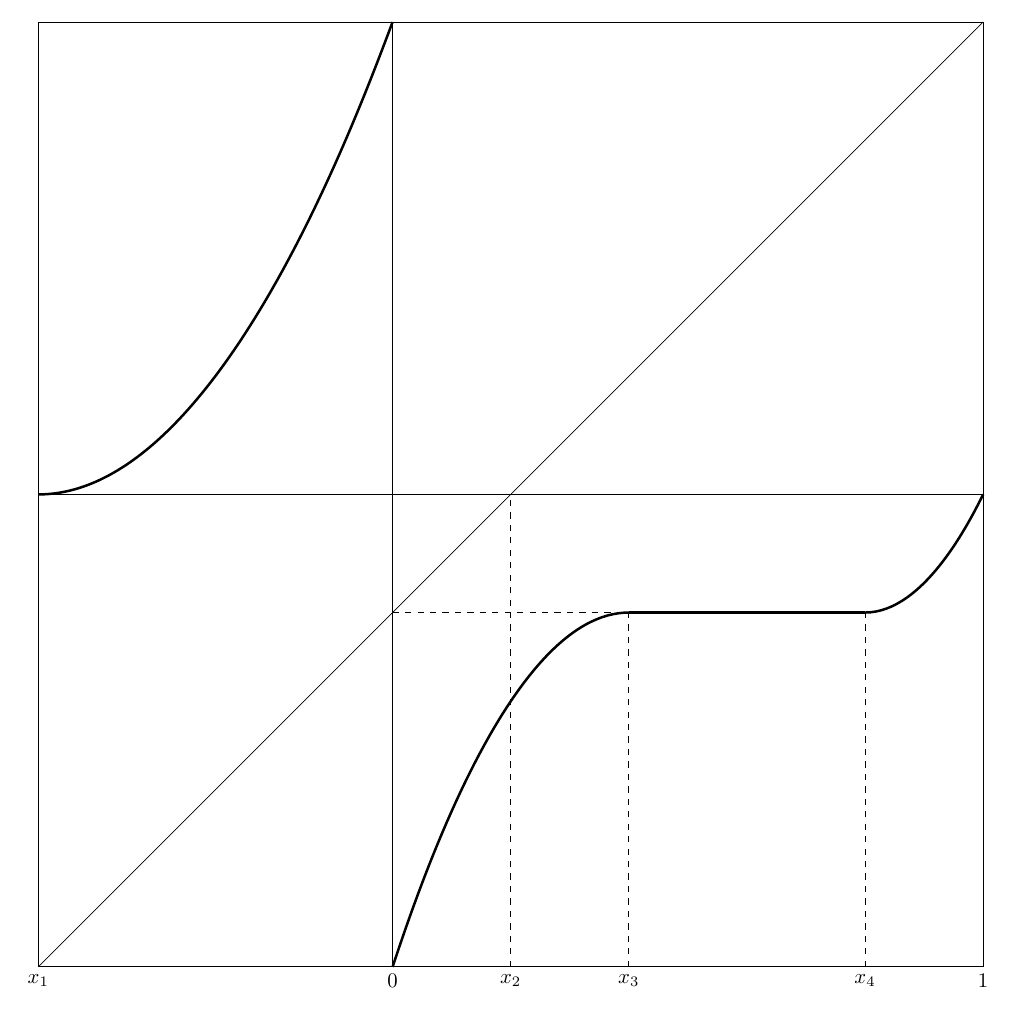}
\caption{A function in $\L^{(X)}$}
\label{Fig1}
\end{figure}

\begin{figure}[h]
\centering
\includegraphics[width=0.3\textwidth]{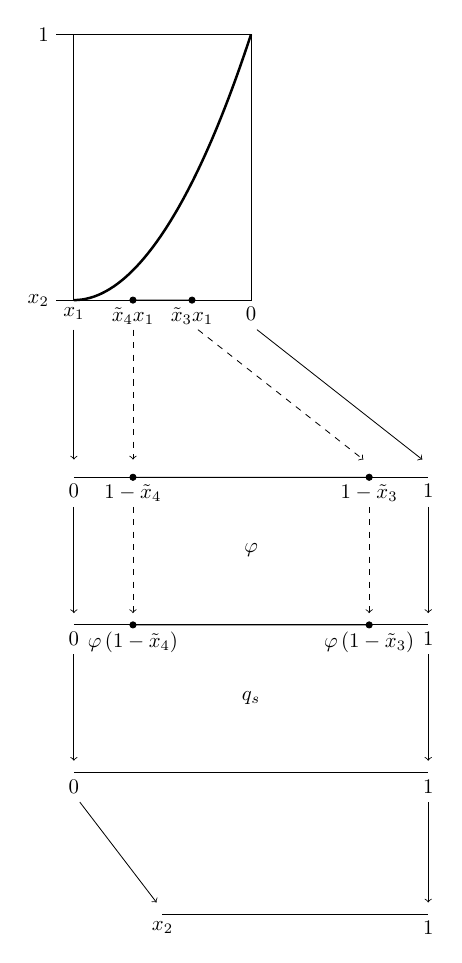}
\caption{The left branch of a function in $\L^{(X)}$}
\label{Fig1.1}
\end{figure}
\begin{figure}[h]
\centering
\includegraphics[width=0.3\textwidth]{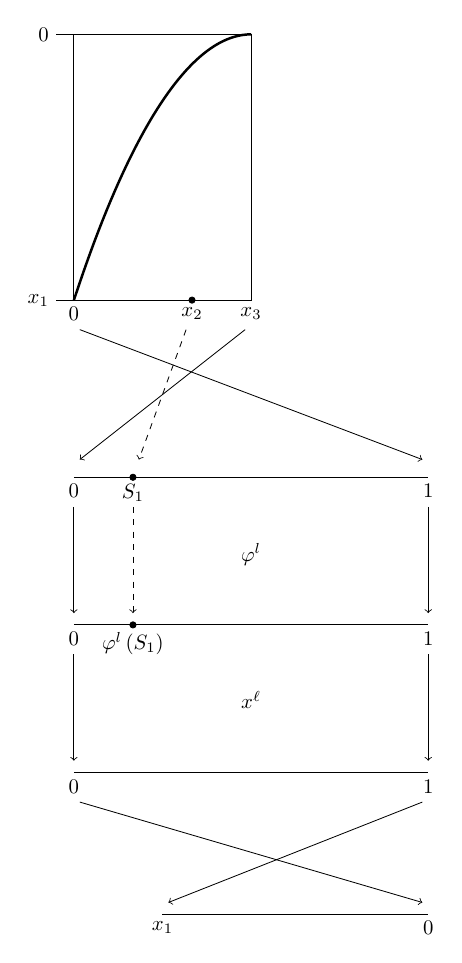}
\caption{The central branch of a function in $\L^{(X)}$}
\label{Fig1.2}
\end{figure}
\begin{figure}[h]
\centering
\includegraphics[width=0.3\textwidth]{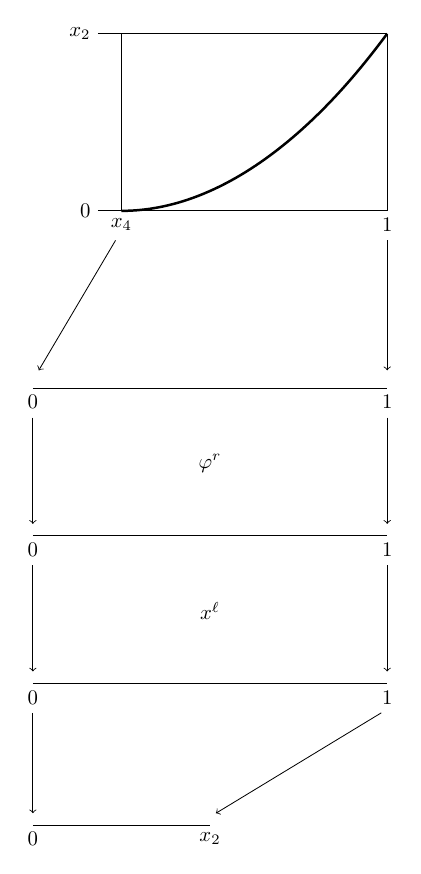}
\caption{The right branch of a function in $\L^{(X)}$}
\label{Fig1.3}
\end{figure}

Depending on the situation, we use different coordinate systems. Given a system $f=(x_1,x_2,x_3,x_4, s, \varphi,\varphi^{l},\varphi^{r})\in \L^{(X)}$ we represent it in $S-$coordinates as follows: $f=(S_1,S_2,S_3,S_4, S_5, \varphi,\varphi^{l},\varphi^{r})$ where
$$
  \begin{aligned}
 &S_1=\frac{x_3-x_2}{x_3}, & S_2=\frac{1-x_4}{1-x_2}, & &S_3=\frac{x_3}{1-x_4}, && S_4=\frac{x_2}{-x_1}, && S_5=s^{\ell-1}.
\end{aligned}
$$
As a consequence, we define 
\begin{equation*}
\Sigma^{(S)}=\{(S_1,S_2,S_3,S_4,S_5)\in\mathbb R^5 | 0<S_2, 0<S_3, 0<S_4\text{ and } 0<S_5<1 \},
\end{equation*}
and 
$$\L^{(S)}=\Sigma^{(S)}\times \text{Diff }^r([0,1])\times \text{Diff }^r([0,1])\times \text{Diff }^r([0,1]).$$ 
Similarly, given a system $f=(S_1,S_2,S_3,S_4, S_5, \varphi,\varphi^{l},\varphi^{r})\in \L^{(S)}$ we represent it in $Y-$coordinates as follows: $f=(y_1,y_2,y_3,y_4, y_5, \varphi,\varphi^{l},\varphi^{r})$ where
$$
  \begin{aligned}
& y_1=S_1, & &y_2=\log S_2, && y_3=\log S_3, && y_4=\log S_4, & y_5=\log S_5.
\end{aligned}
$$
As a consequence, we define 
\begin{equation*}
\Sigma^{(Y)}=\{(y_1,y_2,y_3,y_4,y_5)\in\mathbb R^5 |  y_5<0 \},
\end{equation*}
and 
$$\L^{(Y)}=\Sigma^{(Y)}\times \text{Diff }^r([0,1])\times \text{Diff }^r([0,1])\times \text{Diff }^r([0,1]).$$ 
Observe that these coordinates changes induce diffeomorphisms between $\L^{(X)}$, $\L^{(S)}$ and $\L^{(Y)}$. In particular by explicit calculations the following lemma holds. 
\begin{lem}\label{xtos}
The inverse of $(x_1, x_2, x_3, x_4)\to (S_1, S_2, S_3, S_4)$ is given by
\begin{enumerate}
 \item $x_1=-\frac{S_3(1-S_1)S_2}{(1+S_3(1-S_1)S_2)S_4}$,
 \item $x_2=\frac{S_3(1-S_1)S_2}{1+S_3(1-S_1)S_2}$,
 \item $x_3=\frac{S_3S_2}{1+S_3(1-S_1)S_2}$,
  \item $x_4=1-\frac{S_2}{1+S_3(1-S_1)S_2}$.
\end{enumerate}
\end{lem}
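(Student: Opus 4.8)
The statement is purely algebraic, so the plan is to invert the system
$$S_1=\frac{x_3-x_2}{x_3},\qquad S_2=\frac{1-x_4}{1-x_2},\qquad S_3=\frac{x_3}{1-x_4},\qquad S_4=\frac{x_2}{-x_1}$$
by successive elimination, choosing the order of the unknowns so that every intermediate equation stays linear. (The coordinate $S_5=|x_1|Df(x_1)/(1-x_2)$ plays no role here, since it involves $\varphi_X$; only the four ``geometric'' coordinates are being inverted.)

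First I would rewrite the first equation as $x_2=(1-S_1)x_3$, expressing $x_2$ in terms of $x_3$ alone. From the third equation one has $1-x_4=x_3/S_3$, and from the second $1-x_4=S_2(1-x_2)$; equating these and substituting $x_2=(1-S_1)x_3$ gives
$$\frac{x_3}{S_3}=S_2\bigl(1-(1-S_1)x_3\bigr),$$
a linear equation in the single unknown $x_3$, with solution $x_3=\dfrac{S_2S_3}{1+S_3(1-S_1)S_2}$. Back-substitution then yields in turn $x_2=(1-S_1)x_3$, $x_4=1-S_2(1-x_2)$, and $x_1=-x_2/S_4$ from the fourth equation; simplifying produces exactly the four displayed formulas.

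Two minor points make the inversion rigorous. First, the denominators occurring in the formulas do not vanish on the relevant set: for $X\in M_0$ one has $x_1<0<x_2<x_3<x_4<1$, hence $S_1\in(0,1)$ and $S_2,S_3,S_4>0$, so $1+S_3(1-S_1)S_2>1$ and $S_4>0$. Second, the map $(x_1,x_2,x_3,x_4)\mapsto(S_1,S_2,S_3,S_4)$ is indeed a bijection onto its image, because every step of the elimination above is reversible (no squaring or other non-injective operation is used); alternatively one can simply plug the four expressions for $x_i$ back into the definitions of $S_1,\dots,S_4$ and check that they reduce to identities.

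There is no genuine obstacle in this proof: the only thing requiring a bit of attention is choosing the elimination order — $x_3$ first, then $x_2$, then $x_4$, then $x_1$ — so that one never has to solve a nonlinear equation or distinguish cases; after that the computation is entirely mechanical.
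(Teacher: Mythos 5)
Your proof is correct and matches the paper's approach: the paper simply states that this lemma follows from explicit calculation, and your elimination (solving for $x_3$ first, then back-substituting) is exactly the natural way to carry that calculation out. The additional remarks about nonvanishing denominators and injectivity are sound but beyond what the paper records.
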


From the context and the notation it will be clear which parametrization of our space we are using. The space will then be simply denoted by $\L$ instead of $\L^{(X)}$, $\L^{(S)}$ or $\L^{(Y)}$. If we want to specify that our maps are $\mathcal C^r$ smooth, then we use the notation $\L^{r}$. When needed we use the notation $x_3(f)$ to denote the $x_3$ coordinate of $f$. Similarly for all others.
\subsection{Renormalization}
In this section we define the renormalization operator. The renormalization scheme that we use is adapted to study circle maps with Fibonacci rotation number. For basic concepts concerning circle maps, see \cite{H79}.
\begin{defin}
A map $f\in\L$ is renormalizable if $0<x_2<x_3$. The space of renormalizable maps is denoted by $\L_0$.
\end{defin}
Let $f\in\L_0$ and let $\text{pre}R(f)$ be the first return map of $f$ to the interval $[x_1,x_2]$. Let us consider the function $h:[x_1,x_2]\to[0,1]$ 
defined as $h(x)={x}/{x_1}$ for all $x\in [x_1,x_2]$. Then the function 
\begin{equation*}
Rf:=h\circ \text{pre}R(f)\circ h^{-1}
\end{equation*}
is again a map in $\L$. Notice that $Rf$ is nothing else than the first return map of $f$ to the interval $[x_1,x_2]$ rescaled and flipped. This define the renormalization operator 
$$R:\L_0\to \L.$$ 
\begin{defin}
A map $f\in\L$ is $\infty$-renormalizable if for every $n\geq 0$, $R^n f\in\L_0$. The set of $\infty$-renormalizable functions will be denoted by $\mathscr W\subset\L$. The maps in $\W$ are called Fibonacci maps. 
\end{defin}
\begin{rem}
Observe that, if $f\in\W$, by identifying $x_1$ with $1$ we obtain a map of the circle having Fibonacci rotation number.
\end{rem}
In Defintion \ref{zoomop} we introduce the concept of the zoom operator needed later to describe the action of the renormalization operator on the space of diffeomorphisms.
\begin{defin}\label{zoomop}
Let $I=[a,b]\subset[0,1]$. The \emph{zoom} operator $Z_{I}:\text{ Diff }^0([0,1])\to \text {Diff }^0([0,1])$ is defined as
$$
Z_{I}\varphi(x)=\frac{\varphi((b-a)x+a)-\tilde a}{\tilde b-\tilde a}
$$
where $\varphi\in \text{ Diff }^0([0,1])$, $x\in[0,1]$, $\tilde a=\varphi(a)$ and $\tilde b=\varphi(b)$.
\end{defin} 
 The following two lemmas are a direct consequence of the definition of the renormalization operator.
\begin{lem}\label{changexs}
Let $f=(x_1,x_2,x_3,x_4, s, \varphi,\varphi^{l},\varphi^{r})\in \L_0$ and let\\
$Rf=(\tilde x_1,\tilde x_2,\tilde x_3, \tilde x_4, \tilde s, \tilde \varphi,\tilde\varphi^{l},\tilde\varphi^{r})$. Then
$$
\begin{aligned}
 1.&\text{  }\tilde x_{1}=\frac{x_2}{x_1},\\
 2.&\text{  }\tilde x_{2}=\left(\varphi^{l}\left(\frac{x_3-x_2}{x_3}\right)\right)^{\ell},\\
 3.&\text{  }\tilde x_3=1-\left(\varphi^{-1}\circ q_s^{-1}\right)\left(\frac{x_4-x_2}{1-x_2}\right),\\
  4.&\text{  }\tilde x_4=1-\left(\varphi^{-1}\circ q_s^{-1}\right)\left(\frac{x_3-x_2}{1-x_2}\right),\\
  5.&\text{  }\tilde s=\varphi^{l}\left(\frac{x_3-x_2}{x_3}\right),\\
 6.&\text{  }\tilde \varphi =Z_{\left[\frac{x_3-x_2}{x_3},1\right]}\varphi^{l},\\
   7.&\text{  } \tilde\varphi^{l}=\varphi^{r}\circ Z_{\left[1-\tilde x_3,1\right]}\left( q_s\circ \varphi\right),\\
   8.&\text{  }\tilde \varphi^{r}=Z_{\left[0, \frac{x_3-x_{2}}{x_3}\right]}\varphi^{l}\circ Z_{\left[0,1-\tilde x_4\right]} \left(q_s\circ \varphi\right).
 \end{aligned}
$$
\end{lem}
\begin{lem}\label{ss}
Let $f=(S_1,S_2,S_3,S_4, S_5, \varphi,\varphi^{l},\varphi^{r})\in \L_0$ and \\
$Rf=(\tilde S_1, \tilde S_2, \tilde S_3, \tilde S_4, \tilde S_5, \tilde \varphi,\tilde\varphi^{l},\tilde\varphi^{r})$. Then
\begin{eqnarray*}
1.&\tilde S_{1}&=1-\left(\frac{\ell S_{1}^{\ell}}{S_{2}}\right)\cdot\left[\frac{S_2}{1-\left(\varphi^{-1}\circ q_s^{-1}\right)\left(1- S_2\right)}\cdot\frac{\left(\varphi^{l}\left( S_1\right)\right)^{\ell}}{\ell S_{1}^{\ell}}\right],\\
2.&  \tilde S_{2}&=\frac{ S_{1}S_{2}S_{3}}{\ell S_{5}}\cdot\left[\frac{\ell S_5 \left(\varphi^{-1}\circ q_s^{-1}\right)\left(S_{1}S_{2}S_{3}\right)}{S_{1}S_{2}S_{3}}\cdot\frac{1}{1-\left(\varphi^{l}\left( S_1\right)\right)^{\ell}}\right],\\
3.&  \tilde S_{3}&=\frac{S_{5}}{ S_{1}S_{3}}\cdot\left[\frac{S_{1}S_{3} \left(1-\left(\varphi^{-1}\circ q_s^{-1}\right)\left(1-S_{2}\right)\right)}{S_5 \left(\varphi^{-1}\circ q_s^{-1}\right)\left(S_{1}S_{2}S_{3}\right)}\right],\\
 4.& \tilde S_{4}&=\frac{S_{1}^{\ell}}{S_{4}}\cdot\left[\left(\frac{\varphi^{l}\left( S_1\right)}{S_{1}}\right)^{\ell}\right],\\
5.&  \tilde S_{5}&=S_{1}^{\ell-1}\cdot\left[\left(\frac{\varphi^{l}\left( S_1\right)}{S_{1}}\right)^{\ell -1}\right],\\
6.&\tilde \varphi &=Z_{\left[S_1,1\right]}\varphi^{l},\\
7.& \tilde\varphi^{l}&=\varphi^{r}\circ Z_{\left[q_s^{-1}\left(1-S_{2}\right),1\right]}\left( q_s\right)\circ Z_{\left[\varphi^{-1}\circ q_s^{-1}\left(1-S_{2}\right),1\right]}\left( \varphi\right),\\
8.& \tilde\varphi^{r}&=Z_{\left[0, S_1\right]}\left(\varphi^{l}\right)\circ Z_{\left[0,q_s^{-1}\left(S_1S_{2}S_3\right)\right]} \left(q_s\right)\circ Z_{\left[0,\varphi^{-1}\circ q_s^{-1}\left(S_1S_{2}S_3\right)\right]} \left(\varphi\right).
\end{eqnarray*}
\end{lem}
\subsection{Fibonacci rotation number}\label{Fib}
In the sequel we fix the critical exponent $1<\ell<2$ and we use the following notation. If $f\in\W$ the coordinates of $R^n(f)$ are indicated with a lower index $n$, for example $$x_{3,n}=x_{3}(R^n f),$$ and $S_{2,n}=S_{2}(R^n f)$. Similarly for the other coordinates.

Moreover, let $U_f=[x_3, x_4]$ be the flat interval of $f$. Observe that $R^nf_{|[0,1]}$ is a rescaled version of $f^{q_n}$ where the sequence $\left(q_n\right)_{n\in\N}$ is the Fibonacci sequence satisfying that: $q_1=1$, $q_2=2$ and for all $n\geq 3$, $q_n=q_{n-1}+q_{n-2}$. Observe that if $R^nf=(x_{1,n},x_{2,n},x_{3,n},x_{4,n}, s_n,\varphi_{n},\varphi_{n}^l,\varphi_{n}^r)$ then the points $x_{1,n},x_{2,n},x_{3,n},x_{4,n}$ correspond to dynamical points of the original function $f$. Namely, 
\begin{itemize}
\item[-] $\hat x_{1,n}=f^{q_n+1}(x_3)=f^{q_n+1}(x_4)=f^{q_n}(0)$,
\item[-]  $\hat x_{2,n}=f^{q_{n+1}+1}(x_3)=f^{q_{n+1}+1}(x_4)=f^{q_{n+1}}(0)$,
\item[-]  $\hat x_{3,n}=f^{-q_{n}+1}(x_3)$,
\item[-]  $\hat x_{4,n}=f^{-q_{n}+1}(x_4)$.
\end{itemize}

\section{The asymptotics of renormalization}\label{asymrenormalization}
This section explores the asymptotic behaviour of the renormalization operator. 
Let $f\in \W\subset\L^{(Y)}$. For all $n\in\N$ we define 
$$w_n(f)=\left(\begin{matrix}
y_2(R^nf)\\y_3(R^nf)\\y_4(R^nf)\\y_5(R^nf)
\end{matrix}
\right),$$
$\varphi_{n}(f)=\varphi(R^nf)$, $\varphi^l_{n}(f)=\varphi^l(R^nf)$ and $\varphi^r_{n}(f)=\varphi^r(R^nf)$.
\begin{defin}
Let $\varphi: N\to N$ be a $\Cuno$ map where $N$ is an interval. If $T\subset N$ is an interval such that $D\varphi(x)\neq 0$ for every $x\in T$, we define the \emph{distortion} of $\varphi$ in $T$ as:
$$\text{dist}(\varphi,T)=\sup_{x,y\in T}\log\frac{|D\varphi(x)|}{|D\varphi(y)|}.$$
Here $|D\varphi(x)|$ denoted the norm or absolute value of the derivative of $\varphi$ in $x$.
\end{defin}
\begin{prop}\label{superformula}
Let $1<\ell<2$. Then there exist $\lambda_u>1$, $|\lambda_s|<1$, $E_u,E_s, E_{-1},w_{fix}\in\R^4$ such that the following holds. Given $f\in \W\subset\L^{(Y)}$ with critical exponent $\ell$ then there exist $C_u(f)<0$, $C_s(f)$ and $C_{-}(f)$ such that for all $n\in\N$ 
$$w_n(f)=C_u(f)\lambda_u^nE_u+C_s(f)\lambda_s^nE_s+C_-(f)(-1)^nE_-+O\left(e^{\frac{C_u(f)\lambda_u^{n-4}}{\ell}}\right)+w_{fix},$$ 
and 
$$\begin{matrix}
  \text{dist}(\varphi_{n}(f))&=&O\left(e^{\frac{C_u(f)\lambda_u^{n-3}}{\ell}}\right),\\  \text{dist}(\varphi_{n}^{l}(f))&=&O\left(e^{\frac{C_u(f)\lambda_u^{n-2}}{\ell}}\right), \\ 
   \text{dist}(\varphi_{n}^{r}(f))&=&O\left(e^{\frac{C_u(f)\lambda_u^{n-1}}{\ell}}\right).
  \end{matrix}
$$
\end{prop}
The rest of the section is devoted to prove this proposition. In particular we show that 
\begin{enumerate}
\item $\lambda_s=\frac{\frac{1}{\ell}-\sqrt{\left(\frac{1}{\ell}\right)^2+\frac{4}{\ell}}}{2}\in(-1,0),$
\item $\lambda_u=\frac{\frac{1}{\ell}+\sqrt{\left(\frac{1}{\ell}\right)^2+\frac{4}{\ell}}}{2}>1,$
\item $E_u=\left(1,\frac{-\lambda_u+\ell-1}{\ell\lambda_u(1+\lambda_u)},\frac{1}{1+\lambda_u},\frac{\ell-1}{\ell\lambda_u}\right),$
\item $E_s=\left(1,\frac{-\lambda_s+\ell-1}{\ell\lambda_s(1+\lambda_s)},\frac{1}{1+\lambda_s},\frac{\ell-1}{\ell\lambda_s}\right),$ 
\item $E_{-}=(0,0,1,0).$
\end{enumerate}

\subsection{Asymptotics of the scaling ratio}
We define the following sequence of scaling ratio which plays a main role in the rest of the paper. Let $f\in\W$, the scaling ratio $\alpha_n$ is
$$\alpha_n:=\frac{\left|\left[0, x_{3,n}\right]\right|}{\left|\left[0, x_{4,n}\right]\right|}=\frac{x_{3,n} }{x_{4,n}}=\frac{S_{2,n}S_{3,n}}{1-S_{2,n}+\left(1-S_{1,n}\right)S_{2,n}S_{3,n}}.$$

\begin{theo}\label{alpha}
For every $f\in\W$, there exists $\alpha_0<1$ such that $$\alpha_n=O\left(\alpha_0^{\left(\frac{2}{\ell}\right)^{\left(\frac{n}{2}\right)}}\right).$$
\end{theo}
\begin{proof}
Let $f\in\W\subset\L^r$, $r\geq 2$ and $\pi:\R\to\S$ the natural projection with period $1-x_1(f)$. Consider the lift of $f$, $F:\R\to\R$ such that $F(0)=x_1(f)$. Then, there exists $H:\R\to\R$ which is a lift of a circle homeomorphism and satisfies the following properties:
\begin{itemize}
\item[-] $H(x+1-x_1)=H(x)+1-x_1$,
\item[-] $H_{|[0,1]}=id$,
\item[-] $H:[x_1,0]\to [x_1,0]$ is a polynomial diffeomorphism,
\item[-] $G=H\circ F\circ H^{-1}$ is a $\Cr$ map except in $\pi^{-1}(x_3(f))$ and $\pi^{-1}(x_4(f))$.
\end{itemize} 
Observe that close to $\pi^{-1}(x_3(f))$ and $\pi^{-1}(x_4(f))$, $G$ has the form of $x^{\ell}$ up to a $\Cr$ coordinate change. Let $g:\S\to\S$ be the projection of $G$. Then $g$ belongs to the class of circle maps studied in \cite{5aut, P1, P2} and that the theorem is valid for $g$ (see Appendix in \cite{P1}). Because $H_{|(0,1)\cup (x_1,0)}$ is a diffeomorpism and $H$ is a conjugacy between $F$ and $G$ we get ${\alpha_n(f)}/{\alpha_n(g)}\to 1$ when $n$ goes to infinity. The theorem follows.  
\end{proof}
\begin{rem}\label{prop2}
Observe that, by Proposition 2 in \cite{5aut}, for every $f\in\W$ and for $n\in\N$ large enough, 
$$
\frac{x_{3,n}}{x_{4,n}-x_{3,n}}\leq 2\alpha_n.
$$
\end{rem}
\subsection{Asymptotics of the diffeomorphisms}
In this section we show that the distortion of the diffeomorphic parts of the renormalizations behave as the sequence $\alpha_n$. As a consequence, the diffeomorphisms tend to identity double exponentially fast.
\begin{prop}\label{affdiffeo}
Let $f\in \W$, then for all $n\in\N$,
$$\begin{matrix}
   \text{dist}(\varphi_{n})=O\left(\alpha_{n-2}^{\frac{1}{\ell}}\right), & \text{dist}(\varphi_{n}^{l})=O\left(\alpha_{n-1}^{\frac{1}{\ell}}\right),& 
   \text{dist}(\varphi_{n}^{r})=O\left(\alpha_{n}^{\frac{1}{\ell}}\right).
  \end{matrix}
$$
\end{prop}
The following Proposition is a preparation for proving Proposition \ref{affdiffeo}.
\begin{prop}\label{Koebe}
Let $f\in\W$. There exists a constant $K>0$ such that for every $\alpha>0$ the following holds. Let $T$ and $M\subset T$ be two intervals and let $S, D$ be the left and the right component of $T\setminus M$ and $n\in\N$. Suppose that
\begin{enumerate}
\item for every $0\leq i\leq n-1$ the intervals $f^i(T)$ are pairwise disjoint,
\item $f^n:T\to f^n(T)$ is a diffeomorphism,
\item ${\left|f^n(M)\right|}/{\left|f^n(S)\right|},{\left|f^n(M)\right|}/{\left|f^n(D)\right|}<\alpha$.
\end{enumerate}
Then 
$$
\text{dist}\left(f^n(M)\right)\leq K\alpha.
$$
\end{prop}
The proof of the previous proposition can be found in \cite{dmvs}.
We are now ready to prove Proposition \ref{affdiffeo}.
\begin{proof} In this proof we use the notation introduced in Subsection \ref{Fib}.
We start by proving the statement for $\varphi_n^l$. We define 
\begin{itemize}
\item[-] $T=\left[\hat x_{4,n+1},\hat x_{4,n}\right]$,
\item[-] $M=\left[f(x_{3}),\hat x_{3,n}\right]=\left[0,\hat x_{3,n}\right]$,
\item[-] $S=\left[\hat x_{4,n+1}, f(x_{3})\right]=\left[\hat x_{4,n+1}, 0\right]$,
\item[-] $D=\left[\hat x_{3,n},\hat x_{4,n}\right]$.
\end{itemize}
Observe that $$\varphi_{n}^l=Z_{M}f^{q_n-1}.$$ We claim that:
\begin{enumerate}
\item for every $0\leq i\leq q_n-2$ the intervals $f^i(T)$ are pairwise disjoint,
\item $f^{q_n-1}:T\to f^{q_n-1}(T)$ is a diffeomorphism,
\item ${\left|f^{q_n-1}(M)\right|}/{\left|f^{q_n-1}(S)\right|},{\left|f^{q_n-1}(M)\right|}/{\left|f^{q_n-1}(D)\right|}=O\left(\alpha_{n-1}^{\frac{1}{\ell}}\right)$.
\end{enumerate}
Points $1$ and $2$ comes from general properties of circle maps. For point $3$, observe that 
\begin{eqnarray*}
\frac{\left|f^{q_n-1}(M)\right|}{\left|f^{q_n-1}(S)\right|}=\frac{\left|\left[f^{q_n}(x_3),x_3\right]\right|}{\left|\left[f^{-q_{n-1}}(x_4),f^{q_n}(x_3)\right]\right|}.
\end{eqnarray*}
As consequence, under the image of $f$,
\begin{eqnarray*}
\frac{\left|f^{q_n}(M)\right|}{\left|f^{q_n}(S)\right|}=\frac{\left|\left[\hat x_{2,n-1}, 0\right]\right|}{\left|\left[\hat x_{4,n-1},\hat x_{2,n-1}\right]\right|}=O\left(\frac{\left|\left[\hat x_{3,n-1}, 0\right]\right|}{\left|\left[\hat x_{4,n-1},\hat x_{3,n-1}\right]\right|}\right)=O\left(\alpha_{n-1}\right),
\end{eqnarray*}
where we also used Remark \ref{prop2}. Hence, because of the form of the map near to the boundary points of the flat interval 
\begin{eqnarray*}
\frac{\left|f^{q_n-1}(M)\right|}{\left|f^{q_n-1}(S)\right|}=O\left(\alpha_{n-1}^{\frac{1}{\ell}}\right).
\end{eqnarray*}
Observe that, for $n$ large enough, 
\begin{eqnarray*}
\frac{\left|f^{q_n-1}(M)\right|}{\left|f^{q_n-1}(D)\right|}=\frac{\left|\left[f^{q_n}(x_3),x_3\right]\right|}{\left|\left[x_3, x_4\right]\right|}\leq \frac{\left|f^{q_n-1}(M)\right|}{\left|f^{q_n-1}(S)\right|}=O\left(\alpha_{n-1}^{\frac{1}{\ell}}\right).
\end{eqnarray*}
By Proposition \ref{Koebe} we get the desired distortion estimate for $\varphi_n^l$.
\\
For the distortion estimate of $\varphi_n$, it is enough to repeat the previous argument for 
\begin{itemize}
\item[-] $T=\left[\hat x_{4,n-1},\hat x_{4,n}\right]$,
\item[-] $M=\left[\hat x_{1,n}, f(x_{3})\right]=\left[\hat x_{1,n}, 0\right]$,
\item[-] $S=\left[\hat x_{4,n-1}, \hat x_{1,n}\right]$,
\item[-] $D=\left[f(x_{3}),\hat x_{4,n}\right]=\left[0, \hat x_{4,n}\right]$,
\end{itemize}
and to notice that $\varphi_n=Z_{M}f^{q_{n-1}-1}$.
\\
For the distortion estimate of $\varphi_n^r$, take
\begin{itemize}
\item[-] $T=\left[\hat x_{3,n},\hat x_{3,n-2}\right]$,
\item[-] $M=\left[\hat x_{4,n}, \hat x_{2,n-2}\right]$,
\item[-] $S=\left[\hat x_{3,n}, \hat x_{4,n}\right]$,
\item[-] $D=\left[\hat x_{2,n-2},\hat x_{3,n-2}\right]$,
\end{itemize}
and notice that $\varphi_n^r=Z_{M}f^{q_{n}-1}$. We claim that:
\begin{enumerate}
\item for every $0\leq i\leq q_n-2$ the intervals $f^i(T)$ are pairwise disjoint,
\item $f^{q_n-1}:T\to f^{q_n-1}(T)$ is a diffeomorphism,
\item ${\left|f^{q_n-1}(M)\right|}/{\left|f^{q_n-1}(S)\right|},{\left|f^{q_n-1}(M)\right|}/{\left|f^{q_n-1}(D)\right|}=O\left(\alpha_{n}^{\frac{1}{\ell}}\right)$.
\end{enumerate}
Points $1$ and $2$ comes from general properties of circle maps. For point $3$, observe that 
\begin{eqnarray*}
\frac{\left| f^{q_n-1}(M)\right|}{\left| f^{q_n-1}(D)\right|}=\frac{\left|\left[x_4, f^{q_{n+1}}(x_3)\right]\right|}{\left|\left[f^{q_{n+1}}(x_3), f^{q_{n-1}}(x_3)\right]\right|}\leq \frac{\left|\left[x_4, f^{q_{n+1}}(x_3)\right]\right|}{\left|\left[f^{-q_{n}}(x_3), f^{-q_{n}}(x_4))\right]\right|}.
\end{eqnarray*}
As consequence, under the image of $f$,
\begin{eqnarray*}
\frac{\left|f^{q_n}(M)\right|}{\left|f^{q_n}(D)\right|}\leq\frac{\left|\left[0,\hat x_{3,n}\right]\right|}{\left|\left[\hat x_{3,n},\hat x_{4,n}\right]\right|}=O\left(\alpha_{n}\right),
\end{eqnarray*}
where we also used Remark \ref{prop2}. Hence 
\begin{eqnarray*}
\frac{\left|f^{q_n-1}(M)\right|}{\left|f^{q_n-1}(D)\right|}=O\left(\alpha_{n}^{\frac{1}{\ell}}\right).
\end{eqnarray*}
Observe that, for $n$ large enough, 
\begin{eqnarray*}
\frac{\left|f^{q_n-1}(M)\right|}{\left|f^{q_n-1}(S)\right|}=\frac{\left|\left[x_4, f^{q_{n+1}}(x_3)\right]\right|}{\left|\left[x_3, x_4\right]\right|}\leq \frac{\left|f^{q_n-1}(M)\right|}{\left|f^{q_n-1}(D)\right|}=O\left(\alpha_{n}^{\frac{1}{\ell}}\right).
\end{eqnarray*}
By Proposition \ref{Koebe} we get the desired distortion estimate for $\varphi_n^r$.
\end{proof}

%
%

\subsection{Asymptotic linear behaviour of renormalization}
We are now ready to prove Proposition \ref{superformula}. The proof will be presented in a series of lemmas.
\begin{lem}\label{prev}
Let $f\in \W$, then for all $n\in\N$,
$$
\begin{aligned}
&1.&S_{1,n}&=O\left(\alpha_{n+1}^{\frac{1}{\ell}}\right),\\
&2.&  S_{2,n}&=O\left(\alpha_{n-1}^{\frac{1}{\ell}}\right),\\
&3.&  s_{n}&=O\left(\alpha_{n}^{\frac{1}{\ell}}\right),\\
&4.&S_{1,n}S_{2,n}S_{3,n}&=O\left(\alpha_{n}\right),\\
&5.&S_{2,n}S_{3,n}&=\frac{S_{2,n-1}}{\ell}\left(1+O\left(\alpha_{n-3}^{\frac{1}{\ell}}\right)\right).
\end{aligned}
$$
\end{lem}
\begin{proof}
Because $\varphi^l_n$ has bounded distortion (see Proposition \ref{affdiffeo}), we have that $S_{1,n}=O\left(\varphi^l_n\left(S_{1,n}\right)\right)$. Observe that
$$
\left(\varphi^l_n\left(S_{1,n}\right)\right)^{\ell}=x_{2,n+1}.
$$
Hence
$$
S_{1,n}=O\left(x_{2,n+1}^{\frac{1}{\ell}}\right)=O\left(x_{3,n+1}^{\frac{1}{\ell}}\right)=O\left(\alpha_{n+1}^{\frac{1}{\ell}}\right).
$$
Point $1$ follow. In order to s $2$ observe that, by Proposition $2$ in \cite{5aut} and Proposition \ref{affdiffeo}, there exist two constants $K_1$ and $K_2$, such that 
$$
S_{2,n+2}\leq K_1\frac{\left|\left[x_{4,n+2},x_{3,n}\right]\right|}{\left|\left[x_{3,n+2},x_{3,n}\right]\right|}\leq K_1K_2\frac{\left|\left[f^{-q_{n+1}}(x_4),x_{3}\right]\right|}{\left|\left[f^{-q_{n+1}}(x_3),x_{3}\right]\right|}.
$$
Moreover,
$$
\left(\frac{\left|\left[f^{-q_{n+1}}(x_4),x_{3}\right]\right|}{\left|\left[f^{-q_{n+1}}(x_3),x_{3}\right]\right|}\right)^{\ell}=O\left(\frac{x_{3,n+1}}{x_{4,n+1}}\right).
$$
Combining the two previous inequalities, we find

$$
S_{2,n+2}^{\ell}=O\left(\frac{x_{3,n+1}}{x_{4,n+1}}\right)=O\left(\alpha_{n+1}\right).
$$
Point $2$ follows. 
For proving point $3$, we use point $1$ of this lemma, point $5$ of Lemma \ref{ss} and Proposition \ref{affdiffeo}. Namely, 

$$
s_{n}^{\ell-1}=S_{5,n}=S_{1,n-1}^{\ell-1}\cdot\left(\frac{\varphi_n^l\left(S_{1,n-1}\right)}{S_{1,n-1}}\right)^{\ell-1}=O\left(\alpha_{n}^{\frac{\ell-1}{\ell}}\right)\cdot\left(1+O\left(\alpha_{n-2}^{\frac{1}{\ell}}\right)\right)=O\left(\alpha_{n}^{\frac{\ell-1}{\ell}}\right).
$$
Point $3$ follows. 
The proof of point $4$ is a consequence of Remark \ref{prop2}. Namely,
$$
S_{1,n}S_{2,n}S_{3,n}=\frac{x_{3,n}-x_{2,n}}{1-x_{2,n}}\leq\frac{x_{3,n}}{x_{4,n}-x_{3,n}}=O\left(\alpha_n\right).
$$
For point $5$, by points $2$ and $3$ of Lemma \ref{ss}
\begin{eqnarray*}
S_{2,n}S_{3,n}=\frac{S_{2,n-1}}{\ell}\left[\frac{\ell}{S_{2,n-1}}\cdot\frac{1-\left(\varphi_{n-1}^{-1}\circ q_{s_{n-1}}^{-1}\right)\left(1-S_{2,n-1}\right)}{1-\left(\varphi_{n-1}^{l}\left( S_{1,n-1}\right)\right)^{\ell}}\right].
\end{eqnarray*}
Observe that by Proposition \ref{affdiffeo} we get
\begin{equation}\label{1-varphiq}
1-\left(\varphi_{n-1}^{-1}\circ q_{s_{n-1}}^{-1}\right)\left(1-S_{2,n-1}\right)=\frac{S_{2,n-1}}{Dq_{s_{n-1}}\left(\theta_{n-1}\right)\left(1+O\left(\alpha_{n-3}^{\frac{1}{\ell}}\right)\right)},
\end{equation}
where $\left|\theta_n-1\right|\leq S_{2,n}$. 
By points $2$ and $3$ we find
\begin{equation}\label{Dq}
Dq_{s_{n-1}}\left(\theta_{n-1}\right)=\ell\cdot \left(1+O\left(\alpha_{n-2}^{\frac{1}{\ell}}\right)\right).
\end{equation}
By point $1$ and Proposition \ref{affdiffeo} we get
\begin{equation}\label{1-varphil}
\frac{1}{1-\left(\varphi_{n-1}^l\left(S_{1,n-1}\right)\right)^{\ell}}=\left(1+O\left(\alpha_{n}\right)\right).
\end{equation}
Point $5$ follows.
\end{proof}
\begin{prop}\label{ssn}
Let $f\in \W$ and
$R^nf=(S_{1,n}, S_{2,n}, S_{3,n}, S_{4,n}, S_{5,n}, \varphi_n,\varphi^{l}_n,\varphi^{r}_n)$. Then
$$
\begin{aligned}
&1.& S_{1,n+1}&= 1-\left(\frac{\ell S_{1,n}^{\ell}}{S_{2,n}}\right)\left(1+O\left(\alpha_{n-2}^{\frac{1}{\ell}}\right)\right),\\
&2.&  S_{2,n+1}&=\frac{ S_{1,n}S_{2,n}S_{3,n}}{\ell S_{5,n}}\left(1+O\left(\alpha_{n-2}^{\frac{1}{\ell}}\right)\right),\\
&3.&  S_{3,n+1}&=\frac{S_{5,n}}{S_{1,n}S_{3,n}}\left(1+O\left(\alpha_{n-2}^{\frac{1}{\ell}}\right)\right),\\
& 4.& S_{4,n+1}&=\frac{S_{1,n}^{\ell}}{S_{4,n}}\left(1+O\left(\alpha_{n-1}^{\frac{1}{\ell}}\right)\right),\\
&5.&  S_{5,n+1}&= S_{1,n}^{\ell-1}\left(1+O\left(\alpha_{n-1}^{\frac{1}{\ell}}\right)\right).
\end{aligned}
$$
\end{prop}
\begin{proof}
Let us prove point $1$. By (\ref{1-varphiq}) and (\ref{Dq})
\begin{equation}\label{eq:varphis1overs1}
1-\left(\varphi_n^{-1}\circ q_{s_n}^{-1}\right)\left(1-S_{2,n}\right)=\frac{S_{2,n}}{\ell}\left(1+O(\alpha_{n-2}^{\frac{1}{\ell}})\right).
\end{equation}
Finally, by point $1$ of Lemma \ref{ss}, point $1$ of Lemma \ref{prev}, \eqref{eq:varphis1overs1} and Proposition \ref{affdiffeo} we get
\begin{eqnarray*}
\left[\frac{S_{2,n}}{1-\left(\varphi_n^{-1}\circ q_{s_n}^{-1}\right)\left(1- S_{2,n}\right)}\cdot\frac{\left(\varphi_n^{l}\left( S_{1,n}\right)\right)^{\ell}}{\ell S_{1,n}^{\ell}}\right]&=&\\
\ell\left(1+O\left(\alpha_{n-1}^{\frac{1}{\ell}}\right)\right)\left(1+O\left(\alpha_{n-2}^{\frac{1}{\ell}}\right)\right)\cdot\frac{1}{\ell}\left(1+O\left(\alpha_{n-1}^{\frac{1}{\ell}}\right)\right)&=&
1+O\left(\alpha_{n-2}^{\frac{1}{\ell}}\right).
\end{eqnarray*}
Notice that the previous estimate 
$$
\frac{\left(\varphi_n^{l}\left( S_{1,n}\right)\right)^{\ell}}{ S_{1,n}^{\ell}}=\left(1+O\left(\alpha_{n-1}^{\frac{1}{\ell}}\right)\right),
$$
proves point $4$ and $5$ by using Lemma \ref{ss}.
Let us prove point $2$. By Proposition \ref{affdiffeo} we get
$$
\frac{\ell S_{5,n}\left(\varphi_n^{-1}\circ q_{s_n}^{-1}\right)\left(S_{1,n}S_{2,n}S_{3,n}\right)}{S_{1,n}S_{2,n}S_{3,n}}\leq\frac{\ell S_{5,n}}{Dq_{s_n}\left(0\right)\left(1+O\left(\alpha_{n-2}^{\frac{1}{\ell}}\right)\right)}.
$$
By calculations and points $3$ of Lemma \ref{prev} we find
$$
Dq_{s_n}\left(0\right)=\ell \left(1+O\left(\alpha_{n}^{\frac{1}{\ell}}\right)\right)S_{5,n}.
$$
Finally, by point $2$ of Lemma \ref{ss}, by (\ref{1-varphil}) and by the previous two estimates we have
\begin{eqnarray*}
\left[\frac{\ell S_{5,n}\left(\varphi_n^{-1}\circ q_{s_n}^{-1}\right)\left(S_{1,n}S_{2,n}S_{3,n}\right)}{S_{1,n}S_{2,n}S_{3,n}}\cdot\frac{1}{1-\left(\varphi_n^l\left(S_{1,n}\right)\right)^{\ell}}\right]&=&\\
\left(1+O\left(\alpha_{n-2}^{\frac{1}{\ell}}\right)\right)\left(1+O\left(\alpha_{n}^{\frac{1}{\ell}}\right)\right)\cdot\left(1+O\left(\alpha_{n+1}\right)\right)&=&
1+O\left(\alpha_{n-2}^{\frac{1}{\ell}}\right).
\end{eqnarray*}
Notice that point $1$ and $2$ proves that 
\begin{eqnarray}\label{S1S2}
 \frac{\ell S_{1,n}^{\ell}}{S_{2,n}}=\left(1+O\left(\alpha_{n-2}^{\frac{1}{\ell}}\right)\right).
\end{eqnarray}
By point $5$ of Lemma \ref{prev}, (\ref{S1S2}) and point $5$ of this proposition we get 
\begin{equation}\label{S1S2S3}
\frac{S_{1,n}S_{2,n}S_{3,n}}{s_n^{\ell}}=S_{1,n}\left(1+O\left(\alpha_{n-3}^{\frac{1}{\ell}}\right)\right).
\end{equation}
We are now ready to prove point $3$. Notice that 
$$
\left(\varphi_{n}^{-1}\circ q_{s_{n}}^{-1}\right)\left(S_{1,n}S_{2,n}S_{3,n}\right)=\frac{S_{1,n}S_{2,n}S_{3,n}}{Dq_{s_{n}}\left(\zeta_{n}\right)\left(1+O\left(\alpha_{n-2}^{\frac{1}{\ell}}\right)\right)},
$$
where $q_{s_{n}}\left(\zeta_n\right)\leq S_{1,n}S_{2,n}S_{3,n}\left(1+O\left(\alpha_{n-2}^{\frac{1}{\ell}}\right)\right)$. In particular, because $q_{s_{n}}\left(x\right)\geq x^{\ell}$, we get 
\begin{equation}\label{zeta}
0<\zeta_n\leq\left(S_{1,n}S_{2,n}S_{3,n}\right)^{\frac{1}{\ell}}\left(1+O\left(\alpha_{n-2}^{\frac{1}{\ell}}\right)\right).
\end{equation}
By Lemma \ref{ss}, (\ref{1-varphiq}) and the previous estimate we get
$$
S_{3,n+1}=\frac{S_{5,n}}{S_{1,n}S_{3,n}}\left[\frac{1}{S_{5,n}}\cdot\frac{\left(\left(1-s_n\right)\zeta_n+s_n\right)^{\ell-1}}{\left(\left(1-s_n\right)\theta_n+s_n\right)^{\ell-1}}\left(1+O\left(\alpha_{n-2}^{\frac{1}{\ell}}\right)\right)\right].
$$
Now, by point $2$ and $3$ of Lemma\ref{prev} and by the definition of $S_{5,n}$
\begin{eqnarray*}
S_{3,n+1}&=&\frac{S_{5,n}}{S_{1,n}S_{3,n}}\left[\frac{\left(\left(1-s_n\right)\zeta_n+s_n\right)^{\ell-1}}{S_{5,n}}\left(1+O\left(\alpha_{n-2}^{\frac{1}{\ell}}\right)\right)\right]\\&=&\frac{S_{5,n}}{S_{1,n}S_{3,n}}\left[\left(\left(1-s_n\right)\frac{\zeta_n}{s_n}+1\right)^{\ell-1}\left(1+O\left(\alpha_{n-2}^{\frac{1}{\ell}}\right)\right)\right].
\end{eqnarray*}
Finally, by (\ref{zeta}), (\ref{S1S2S3}) and point $1$ of Lemma \ref{prev}, we find
\begin{eqnarray*}
S_{3,n+1}&=&\frac{S_{5,n}}{S_{1,n}S_{3,n}}\left[\left(1+O\left(S_{1,n}^{\frac{1}{\ell}}\right)\right)\left(1+O\left(\alpha_{n-2}^{\frac{1}{\ell}}\right)\right)\right]\\&=&\frac{S_{5,n}}{S_{1,n}S_{3,n}}\left(1+O\left(\alpha_{n-2}^{\frac{1}{\ell}}\right)\right).
\end{eqnarray*}
Point $3$ follows.
\end{proof}
\begin{cor}\label{s1}
Let $f\in \W$ and
$R^nf=(S_{1,n}, S_{2,n}, S_{3,n}, S_{4,n}, S_{5,n}, \varphi_n,\varphi^{l}_n,\varphi^{r}_n)$. Then
$$
\begin{aligned}
&1.& \frac{\ell S_{1,n}^{\ell}}{S_{2,n}}&=\left(1+O\left(\alpha_{n-2}^{\frac{1}{\ell}}\right)\right),\\
&2.& \frac{S_{2,n}S_{3,n}}{s_n^{\ell}}&=\left(1+O\left(\alpha_{n-3}^{\frac{1}{\ell}}\right)\right).
\end{aligned}
$$
\end{cor}
Let $f\in\W$. Recall that for all $n\in\N$ we defined
$$w_n=\left(\begin{matrix}
\log S_{2,n}\\\log S_{3,n}\\\log S_{4,n}\\\log S_{5,n}
\end{matrix}
\right)=\left(\begin{matrix}
y_{2,n}\\ y_{3,n}\\ y_{4,n}\\ y_{5,n}
\end{matrix}
\right).$$
We define now
$$
 M=\left(\begin{matrix} 1+\frac{1}{\ell} & 1& 0& -1\\
 -\frac{1}{\ell} & -1& 0& 1\\
 1 & 0& -1& 0\\
 1-\frac{1}{\ell} & 0& 0& 0
 \end{matrix}\right),
$$
and 
$$w^*=\left(\begin{matrix}
-\left(1+\frac{1}{\ell}\right)\log\ell\\\frac{1}{\ell}\log\ell\\-\log\ell\\ -\left(1-\frac{1}{\ell}\right)\log\ell
\end{matrix}
\right).$$
Point $1$ of Corollary \ref{s1} allows to eliminate $S_{1,n}$ which asymptotically is determined by $S_{2,n}$. With the notations introduced above, 
the new estimates of Proposition \ref{ssn} obtained by the substitution of $S_{1,n}$ takes the following linear form.
\begin{prop}\label{wn}
 Let $f\in \W$, then 
 $$w_{n+1}=Mw_n+O\left(\alpha_{n-2}^{\frac{1}{\ell}}\right)+w^*.$$
\end{prop}

\begin{lem}\label{eighvector}
The eigenvalues of $M$ are
$$
\begin{matrix}
 -1,& 0, &\lambda_s=\frac{\frac{1}{\ell}-\sqrt{\left(\frac{1}{\ell}\right)^2+\frac{4}{\ell}}}{2}\in(-1,0),&
\lambda_u=\frac{\frac{1}{\ell}+\sqrt{\left(\frac{1}{\ell}\right)^2+\frac{4}{\ell}}}{2}>1,
\end{matrix}$$
and the corresponding eigenvectors 
$$
\begin{aligned}
 E_{-}&=&(e^-_i)_{i=2,3,4,5}&=(0,0,1,0),\\
 E_0&=&(e^0_i)_{i=2,3,4,5}&=(0,1,0,1),\\ 
   E_s&=&(e^s_i)_{i=2,3,4,5}&=\left(1,\frac{-\lambda_s+\ell-1}{\ell\lambda_s(1+\lambda_s)},\frac{1}{1+\lambda_s},\frac{\ell-1}{\ell\lambda_s}\right),\\
E_u&=&(e^u_i)_{i=2,3,4,5}&=\left(1,\frac{-\lambda_u+\ell-1}{\ell\lambda_u(1+\lambda_u)},\frac{1}{1+\lambda_u},\frac{\ell-1}{\ell\lambda_u}\right).
\end{aligned}
$$
\end{lem}
Let $w_{fix}$ be the fixed point of the equation $Lw_{fix}+w^*=w_{fix}.$ 

\subsection{Asymptotics in $Y$-coordinates}
\begin{lem}\label{suplemma}
For every $f\in \W$ there exist $C_u(f)$, $C_s(f)$ and $C_{-}(f)$ such that, for all $n\in\N$ ,
\begin{equation}\label{eq:wninalphan}
w_n=C_u(f)\lambda_u^nE_u+C_s(f)\lambda_s^nE_s+C_-(f)(-1)^nE_-+O\left(\left(\alpha_0^{(\frac{2}{\ell})^{\frac{n-3}{2}}}\right)^{\frac{1}{\ell}}\right)+w_{fix}.
\end{equation}
\end{lem}
\begin{proof}
For all $n\in\N$ let $v_n=w_n-w_{fix}$. Then by Proposition \ref{wn}, $$v_{n+1}=Mv_n+\epsilon_n,$$ where 
$\epsilon_n=O\left(\alpha_{n-2}^{\frac{1}{\ell}}\right)=O\left(\left(\alpha_0^{(\frac{2}{\ell})^{\frac{n-2}{2}}}\right)^{\frac{1}{\ell}}\right)$, see Theorem \ref{alpha}. 
By iterating this formula we get
\begin{equation}\label{forvn}
 v_n=M^nv_0+\sum_{k=0}^{n-1}L^{n-k-1}\epsilon_k.
\end{equation}
By expressing $v_0$ and $\epsilon_n$ in the eigenbasis we find the following equalities:
$$v_0=C_u^0E_u+C_s^0E_s+C_-^0E_-+C_0^0E_0,$$
$$\epsilon_n=\epsilon_{u,n}E_u+\epsilon_{s,n}E_s+\epsilon_{-,n}E_-+\epsilon_{0,n}E_0.$$ 
We consider now the following quantities depending on $f$:
\begin{eqnarray}\label{cu}
 C_u(f)&=&C_u^0+\sum_{k=0}^{\infty}\frac{\epsilon_{u,k}}{\lambda_u^{k+1}},\\
 \label{cs}
 C_s(f)&=&C_s^0+\sum_{k=0}^{\infty}\frac{\epsilon_{s,k}}{\lambda_s^{k+1}},\\
 \label{c-}
 C_-(f)&=&C_-^0+\sum_{k=0}^{\infty}\frac{\epsilon_{-,k}}{(-1)^{k+1}}.
\end{eqnarray}
Moreover, equation (\ref{forvn}) in the coordinates becomes
\begin{eqnarray*}
v_n&=&\left(C_u^0+\sum_{k=0}^{n-1}\frac{\epsilon_{u,k}}{\lambda_u^{k+1}}\right)\lambda_u^nE_u+\left(C_s^0+\sum_{k=0}^{n-1}\frac{\epsilon_{s,k}}{\lambda_s^{k+1}}\right)
\lambda_s^nE_s+\\
&&\left(C_-^0+\sum_{k=0}^{n-1}\frac{\epsilon_{-,k}}{(-1)^{k+1}}\right)(-1)^nE_-+
\epsilon_{0, n-1}E_0\\
&=&C_u(f)\lambda_u^nE_u+C_s(f)\lambda_s^nE_s+C_-(f)(-1)^nE_-+C_0^0E_0+
\left(\sum_{k=n}^{\infty}\frac{\epsilon_{u,k}}{\lambda_u^{k+1}}\right)\lambda_u^nE_u
+\\ 
&&\left(\sum_{k=n}^{\infty}\frac{\epsilon_{s,k}}{\lambda_s^{k+1}}\right)\lambda_s^nE_s+
\left(\sum_{k=n}^{\infty}\frac{\epsilon_{-,k}}{(-1)^{k+1}}\right)(-1)^nE_-+
\epsilon_{0, n-1}E_0\\
&=&C_u(f)\lambda_u^nE_u+C_s(f)\lambda_s^nE_s+C_-(f)(-1)^nE_-+O\left(\left(\alpha_0^{(\frac{2}{\ell})^{\frac{n-3}{2}}}\right)^{\frac{1}{\ell}}\right).
\end{eqnarray*}
Notice that the three tail terms were estimated in the following way. Let us start with the tail term corresponding to $E_s$. The others are treated in a similar way.
Notice that for $k$ large enough we have
$$\frac{\left(\alpha_0^{(\frac{2}{\ell})^{\frac{k}{2}}}\right)^{\frac{1}{\ell}}}{\left(\alpha_0^{(\frac{2}{\ell})^{\frac{k-1}{2}}}\right)^{\frac{1}{\ell}}\lambda_s}\leq\frac{1}{{2}}.$$
As a consequence 
\begin{eqnarray*}
\left|\sum_{k=n}^{\infty}\frac{\epsilon_{s,k}}{\lambda_s^{k+1}}\right|\lambda_s^n&=&O\left(
\sum_{k=n}^{\infty}\frac{\left(\alpha_0^{(\frac{2}{\ell})^{\frac{k-2}{2}}}\right)^{\frac{1}{\ell}}}{\lambda_s^{k}}\lambda_s^{n-1}\right)\\
&=&O\left(\frac{\left(\alpha_0^{(\frac{2}{\ell})^{\frac{n-2}{2}}}\right)^{\frac{1}{\ell}}}{\lambda_s^n}\lambda_s^{n-1}\sum_{k=0}^{\infty}\left(\frac{1}{{2}}\right)^k\right)\\
&=&O\left(\left(\alpha_0^{(\frac{2}{\ell})^{\frac{n-2}{2}}}\right)^{\frac{1}{\ell}}\right).
\end{eqnarray*}
Finally, observe that the largest estimation comes from the term $$\epsilon_{0,n-1}=
O\left(\left(\alpha_0^{(\frac{2}{\ell})^{\frac{n-3}{2}}}\right)^{\frac{1}{\ell}}\right).$$
\end{proof}

\begin{lem}\label{asymalpha}
Let $f\in \W$ then $$\alpha_n=O\left(e^{C_u(f)\lambda_u^{n-1}}\right),$$ and $C_u(f)<0$.
\end{lem}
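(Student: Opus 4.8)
The plan is to rewrite $\alpha_n$ in the $S$-coordinates, compare it with $S_{2,n}S_{3,n}$ up to bounded factors, and then read off the asymptotics of $S_{2,n}S_{3,n}$ from Lemma \ref{suplemma}. The one delicate point is that the comparison $\alpha_n\asymp S_{2,n}S_{3,n}$ cannot be set up before we know the sign of $C_u(f)$, so that sign has to be extracted first, directly from the unconditional decay $\alpha_n\to0$.

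First I would pass to $S$-coordinates. Put $D_n:=1+S_{3,n}(1-S_{1,n})S_{2,n}$; since $S_{1,n}\in(0,1)$ and $S_{2,n},S_{3,n}>0$ one has $D_n>1$. Applying Lemma \ref{xtos} to $R^n(F^{-1}f)$ gives $x_{3,n}=S_{3,n}S_{2,n}/D_n$ and $x_{4,n}=(D_n-S_{2,n})/D_n$, so $D_n-S_{2,n}=D_nx_{4,n}>0$ and
$$\alpha_n=\frac{x_{3,n}}{x_{4,n}}=\frac{S_{2,n}S_{3,n}}{D_n-S_{2,n}}.$$
Because $x_{3,n}<x_{4,n}$ we have $\alpha_n<1$, i.e. $D_n-S_{2,n}>S_{2,n}S_{3,n}$, while trivially $D_n-S_{2,n}<D_n<1+S_{2,n}S_{3,n}$. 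Hence we record the two-sided estimate
$$S_{2,n}S_{3,n}<D_n-S_{2,n}<1+S_{2,n}S_{3,n}.$$
Next, adding the $S_2$- and $S_3$-coordinates of the identity in Lemma \ref{suplemma} and using that $|\lambda_s|<1$, that $|(-1)^n|=1$, that the error term tends to $0$ and that $w_{fix}$ is constant, I get
$$\log\bigl(S_{2,n}S_{3,n}\bigr)=C_u(f)\,(e^u_2+e^u_3)\,\lambda_u^n+O(1).$$
By Lemma \ref{eighvector}, $e^u_2=1$ and $e^u_3=\dfrac{-\lambda_u+\ell-1}{\ell\lambda_u(1+\lambda_u)}$; since $\lambda_u$ is a root of $\ell\lambda^2-\lambda-1=0$, i.e. $\ell\lambda_u^2=1+\lambda_u$, a one-line computation gives $e^u_2+e^u_3=1/\lambda_u>0$.

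Now I would pin down the sign of $C_u(f)$. Recall $\alpha_n\to0$, i.e. $\log\alpha_n\to-\infty$. If $C_u(f)>0$, then $S_{2,n}S_{3,n}\to\infty$ by the display above, so dividing the two-sided estimate by $S_{2,n}S_{3,n}$ forces $\tfrac{D_n-S_{2,n}}{S_{2,n}S_{3,n}}\to1$ and hence $\alpha_n\to1$, a contradiction. If $C_u(f)=0$, then $\log(S_{2,n}S_{3,n})=O(1)$, so $S_{2,n}S_{3,n}$ and, by the two-sided estimate, $D_n-S_{2,n}$ are bounded above and below by positive constants, whence $\alpha_n$ is bounded below by a positive constant, again contradicting $\alpha_n\to0$. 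Since $e^u_2+e^u_3>0$, this yields $C_u(f)<0$.

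Finally, for the bound: with $C_u(f)<0$ and $e^u_2=1>0$, the $S_2$-coordinate of Lemma \ref{suplemma} gives $\log S_{2,n}=C_u(f)\lambda_u^n+O(1)\to-\infty$, so $S_{2,n}\to0$, and the display above gives $S_{2,n}S_{3,n}\to0$. Therefore, using $0<1-S_{1,n}<1$,
$$D_n-S_{2,n}=1-S_{2,n}+S_{2,n}S_{3,n}(1-S_{1,n})\longrightarrow1,$$
so $D_n-S_{2,n}\ge c$ for some $c>0$ and all $n$ (the finitely many initial terms being individually positive). Hence
$$\alpha_n=\frac{S_{2,n}S_{3,n}}{D_n-S_{2,n}}\le\frac1c\,S_{2,n}S_{3,n}=O\!\left(e^{C_u(f)(e^u_2+e^u_3)\lambda_u^n}\right),$$
as claimed. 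The main obstacle is precisely the circular dependence just described — one needs $D_n-S_{2,n}$ bounded away from $0$ to compare $\alpha_n$ with $S_{2,n}S_{3,n}$, but that needs $S_{2,n}\to0$, i.e. $C_u(f)<0$; resolving it requires reading the sign of $C_u(f)$ off $\alpha_n\to0$ using only the crude facts $\alpha_n<1$ and the two-sided estimate, after which everything is bookkeeping with Lemmas \ref{suplemma} and \ref{eighvector}.
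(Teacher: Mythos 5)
Your proof is correct and reaches the same conclusion by a route that is parallel to, but more self-contained than, the paper's. Both arguments reduce $\alpha_n$ to $S_{2,n}S_{3,n}$ times a bounded correction factor, read off the asymptotics from Lemma \ref{suplemma}, and extract $C_u(f)<0$ from the known decay $\alpha_n\to0$ (the external input cited from \cite{5aut,P1,P2}). The paper uses the decomposition $\alpha_n=S_{2,n}S_{3,n}\,\frac{1-x_{2,n}}{x_{4,n}}$ and controls the correction factor by showing $\frac{1-x_{2,n}}{x_{4,n}}\to1$, which it does by quoting Proposition~2 of \cite{5aut} together with Lemma \ref{orderint}; the sign of $C_u(f)$ is then asserted rather tersely, leaving the implicit two-sided comparison with $\log\alpha_n\to-\infty$ to the reader. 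You instead derive the exact identity $\alpha_n=S_{2,n}S_{3,n}/(D_n-S_{2,n})$ directly from Lemma \ref{xtos}, with $D_n=1+S_{3,n}(1-S_{1,n})S_{2,n}$, and control the correction factor by the purely algebraic two-sided estimate $S_{2,n}S_{3,n}<D_n-S_{2,n}<1+S_{2,n}S_{3,n}$ (coming only from $\alpha_n<1$ and $S_{1,n}\in(0,1)$). This lets you rule out $C_u(f)\ge0$ by an explicit trichotomy and then deduce $D_n-S_{2,n}\to1$ once $S_{2,n}\to0$, avoiding the appeal to the external distortion estimates and making the circular-looking dependence transparent. You also compute the closed form $e^u_2+e^u_3=1/\lambda_u$, which is a nice simplification the paper leaves implicit. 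Both proofs are sound; yours is somewhat more economical in its use of auxiliary lemmas and more careful about where $\alpha_n\to0$ actually enters.
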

\begin{proof}
By the definition of $\alpha_n$ 
$$\alpha_n=\frac{S_{2,n}S_{3,n}}{1-S_{2,n}+\left(1-S_{1,n}\right)S_{2,n}S_{3,n}},$$
and by applying point $2$ and $5$ of Lemma \ref{prev}, we have
$$\frac{\alpha_n}{S_{2,n-1}}=1+O\left(S_{2,n-1}\right)=1+O\left(\alpha_{n-2}^{\frac{1}{\ell}}\right).$$
From the previous estimate and from \eqref{eq:wninalphan}, we get
$$\log\alpha_n=\log S_{2,n-1}+O\left(\alpha_{n-2}^{\frac{1}{\ell}}\right)=C_u(f)e^u_2\lambda_u^{n-1}+C_s(f)e^s_2\lambda_s^{n-1}+O\left(\left(\alpha_0^{(\frac{2}{\ell})^{\frac{n-3}{2}}}\right)^{\frac{1}{\ell}}\right).$$
 From Lemma \ref{eighvector} and Lemma \ref{alpha} it follows that 
 $$\alpha_n=O\left(e^{C_u(f)\lambda_u^{n-1}}\right),$$
 and $C_u(f)<0$.
\end{proof}
Lemma \ref{suplemma} and Lemma \ref{asymalpha} proves Proposition \ref{superformula} which was the aim of this section.
\subsection{Asymptotics in $X$-coordinates}
In the previous section we described the asymptotics of the renormalizations in the $S$ and $Y$-coordinates. Here we deduce the asymptotics of the renormalizations in the $X$-coordinates. This is needed in Section \ref{rigidity}. 
\begin{lem}\label{xs}
Let $f\in\W^{(X)}$, then 
\begin{eqnarray*}
\log |x_{1,n}|&=&C_u(f)\lambda_u^n(e_2^u+e_3^u-e_4^u)+C_s(f)\lambda_s^n(e_2^s+e_3^s-e_4^s)-C_-(f)(-1)^n+\delta_{1,n},
\\
 \log x_{2,n}&=&C_u(f)\lambda_u^n(e_2^u+e_3^u)+C_s(f)\lambda_s^n(e_2^s+e_3^s)+
 \delta_{2,n},\\
 \log x_{3,n}&=&C_u(f)\lambda_u^n(e_2^u+e_3^u)+C_s(f)\lambda_s^n(e_2^s+e_3^s)+
 \delta_{3,n},\\
\log |1-x_{4,n}|&=&C_u(f)\lambda_u^n(e_2^u)+C_s(f)\lambda_s^n(e_2^s)+
 \delta_{4,n},
\end{eqnarray*}
where $$\delta_{1,n},\delta_{2,n},\delta_{3,n},\delta_{4,n}=O\left(e^{\frac{C_u(f)\lambda_u^{n-4}}{\ell}}\right).$$
\end{lem}
\begin{proof}
By Lemma \ref{xtos},
$$
x_{1,n}=-\frac{S_{3,n}(1-S_{1,n})S_{2,n}}{(1+S_{3,n}(1-S_{1,n})S_{2,n})S_{4,n}}.
 $$
 As consequence
 $$
\log\left| x_{1,n}\right|=y_{2,n}+y_{3,n}-y_{4,n}+O\left(S_{1,n}\right)+O\left(S_{2,n}S_{3,n}\right),
 $$
 and by points $1$, $2$ and $5$ of Lemma \ref{prev}
 $$
\log\left| x_{1,n}\right|=y_{2,n}+y_{3,n}-y_{4,n}+O\left(\alpha_{n+1}^{\frac{1}{\ell}}\right)+O\left(\alpha_{n-2}^{\frac{1}{\ell}}\right).
 $$
By Proposition \ref{superformula} and by Lemma \ref{asymalpha} the esimates for $\log\left| x_{1,n}\right|$ follows. Similarly one obtains the other estimates. 
\end{proof}

\section{Renormalization of decomposed maps}\label{deco}

The renormalization operator in the space $\L$ is not differentiable. 
We cannot use the standard cone field method for the construction of invariant manifolds. 
The reason for which our renormalization operator is not differentiable is that the composition 
of $\Ct$ diffeomorphisms is not differentiable. 
In this section we introduce a space $L$ and a corresponding renormalization $R:L_0\to L$ in such a way that $R$ does not involve composition\footnote{We will abuse the notation denoting  both renormalization operators by $R$}.  Moreover, there is a natural projection $O:L\to\L$ such that $R:L_0\to L$ is a lift of $R:\L_0\to \L$. The space $L$ is called 
the space of decompositions. The corresponding renormalization $R$ on $L$ is still not differentiable, but it is jump-out-differentiable. This will allow us to apply Theorem \ref{InvMan} to obtain the invariant manifold formed by Fibonacci Cherry maps in the space $L$ which we will pull in $\L$ by using the natural projection $O$.

\subsection{The space of decompositions}
To discuss the differentiability of $R$ we need the space $\text{Diff }^{3+\epsilon}([0,1])$, with $\epsilon>0$, to be a normed vector space. 
This is achieved by using the concept of non-linearity.
\begin{defin}
Let $r\geq 2$. The non linearity $\eta:\text{Diff }^r([0,1])\to\mathcal{C}^{r-2}([0,1])$ is defined as
$$\varphi\to\eta_{\varphi}=D\log D\varphi.$$
\end{defin}
\begin{lem}
$\eta$ is a bijection.
\end{lem}
\begin{proof}
There is an explicit inverse of $\eta$. Namely,
$$\varphi(x)=\frac{\int_0^x e^{\int_0^s\eta} ds}{\int_0^1 e^{\int_0^s\eta} ds}.$$
\end{proof}
We can now identify $\text{Diff }^r([0,1])$ with $\mathcal{C}^{r-2}([0,1])$ and use the Banach space structure of 
$\mathcal{C}^{r-2}([0,1])$ on $\text{Diff }^r([0,1])$. The norm of a diffeomorphism $\varphi\in \text{Diff }^r([0,1])$ is defined as 
 $$|\varphi|_r=|\eta_{\varphi}|_{\mathcal{C}^{r-2}}.$$
 \begin{lem} Let $r\geq 2$. The metric defined on $\text{Diff }^r([0,1])$ by the norm $|\cdot|_r$, is equivalent to the $\mathcal{C}^{r}$ distance. Namely,
  $$|\varphi_n-\varphi|_r\to 0\iff 
  |\varphi_n-\varphi|_ {\mathcal{C}^{r}}\to 0.$$
\end{lem}
We are now ready for the definition of decomposition. Let $$T=\left\{\frac{k}{2^n}| n\geq 0, 0<k< 2^n\right\}\subset (0,1)$$ be the dyadic rationals 
with his natural order. Let $\theta:T\to T$ the doubling map $$\theta:\tau\to 2\tau \mod 1.$$ 
\\
Let $X_r$, $r\ge 2$, be the space of decomposed diffeomorphisms
$$X_r=\left\{\underline\varphi=(\varphi_\tau)_{\tau\in T}| 
\varphi_\tau\in\text{Diff }^r([0,1]), \sum |\varphi_\tau|_r<\infty \right\}.$$ 
We define the norm of $\underline\varphi\in X_r$ as
$$|\underline\varphi|_r=\sum |\varphi_{\tau}|_r.$$
\begin{rem}
One should think of $\underline\varphi$ as a chain of diffeomorphisms labeled by $T$. Their use allows to avoid composition. When $r=3+\epsilon$ we suppress the lower index and we simply use the notation $X=X_{3+\epsilon}$ and
$|\underline\varphi|=|\underline\varphi|_{3+\epsilon}$.
\end{rem}
Let $O_n:X\to\text{Diff }^2([0,1])$ be the partial composition defined as
$$O_n\underline\varphi=\varphi_1\circ\cdots\circ\varphi_{\frac{k}{2^n}}\circ\cdots\circ
\varphi_{\frac{2}{2^n}}\circ \varphi_{\frac{1}{2^n}}.$$
\begin{prop}\label{comp}
 The limit $$O\underline\varphi:=\lim O_n\underline\varphi$$ exists and it 
 is called the composition of $\underline\varphi$. 
 The composition $$O:X\to\text{Diff }^2([0,1])$$  is Lipschitz on bounded sets.
\end{prop}
\begin{proof}
 The proof is a minor variation of the proof of the Sandwich Lemma from \cite{M}.
\end{proof}
We are now ready to define the space of decompositions $L$.
Recall that functions in $$\L^{(Y, 3+\epsilon)}=\Sigma^{(Y)}\times \text{Diff }^{ 3+\epsilon}([0,1])\times \text{Diff }^{ 3+\epsilon}([0,1])\times\text{Diff }^{ 3+\epsilon}([0,1])$$  are represented in $Y-$coordinates as follows: $f=(y_1,y_2,y_3,y_4, y_5, \varphi,\varphi^{l},\varphi^{r})$ where
$$
  \begin{matrix}
 y_1=S_1, & y_2=\log S_2, & y_3=\log S_3, & y_4=\log S_4, & y_5=\log S_5,
\end{matrix}
$$
and
\begin{equation*}
\Sigma^{(Y)}=\{(y_1,y_2,y_3,y_4,y_5)\in\mathbb R^5 | y_5<0 \}.
\end{equation*}
The space of decompositions is similar to $\L^{(Y, 3+\epsilon)}$ except that the diffeomorphisms are replaced by decomposed diffeomorphisms. Namely, $$L=\Sigma^{(Y)}\times X\times X\times X,$$
  and similarly as before, a point $\underline f\in L$ is represented by  $\underline f=(y_1,y_2,y_3,y_4,y_5, \underline\varphi,\underline\varphi^{l}, \underline\varphi^{r}).$ 
Observe that $L\subset\R^5\times X\times X\times X$ which carries a norm defined by the euclidian norm of $\R^5$ and the non-linearity norm of $X$. Namely, if $\underline f=\left(y_1,y_2,y_3,y_4,y_5, \underline\varphi,\underline\varphi^{l}, \underline\varphi^{r}\right)\in L$ then $$\left|\underline f\right|=\sum\left| y_i\right| +\left|\underline\varphi\right| +\left|\underline\varphi^{l}\right| +\left|\underline\varphi^{r}\right|.$$
\\
Let $O: L\to\L^{(Y,2)}$ be the composition defined as 
$$O:\underline f=\left(y_1,y_2,y_3,y_4,y_5, \underline\varphi,\underline\varphi^{l}, \underline\varphi^{r}\right)\to 
f=(y_1,y_2,y_3,y_4,y_5, O\underline\varphi,O\underline\varphi^l, O\underline\varphi^r)
$$
Observe that $\L^{(Y, 3+\epsilon)}$ is an open subset of $\R^5\times \text{Diff}^{ 3+\epsilon}([0,1])\times \text{Diff}^{ 3+\epsilon}([0,1])\times \text{Diff}^{ 3+\epsilon}([0,1])$ which carries a norm defined by the euclidian norm of $\R^5$ and the non-linearity norm of $\text{Diff}^{ 3+\epsilon}([0,1])$. There is a natural embedding of $\L^{(Y, 3+\epsilon)}$ into $L$.

\subsection{Renormalization on the space of decompositions}\label{RenonL}
Recall the definition of zoom, $Z_I$, from Definition \ref{zoomop}. The following lemmas hold. 
\begin{lem}\label{normzoom}
Let $r\ge 2$ and let $I$ be an interval in $[0,1]$. The norm of the linear operator $Z_{I}:\text{Diff }^r([0,1])\to \text{Diff }^r([0,1])$ satisfies
$$|Z_{I}|_r=|I|.$$
\end{lem}
\begin{lem}
Let $I$ be an interval of $[0,1]$. Then $Z_{I}\varphi$ is 
the diffeomorphism obtained by rescaling the restriction of $\varphi$ to $I$.
\end{lem}
We are now ready to define the renormalization operator on $L$.
We consider the subset of renormalizable maps $L_0\subset L$, 
$$L_0=\left\{\underline f\in L| 0<y_1<1\right\}.$$ 
 Let $\tau\in T$ 
and let $\pi^{\tau}: X\to X$ be defined as
$$(\pi^{\tau}\underline{\varphi})_{\tau'}=\left\{
    \begin{aligned}
    &0  & \tau'>\tau\\
      &\varphi_{\tau'}  & \tau'\leq\tau 
      \end{aligned}\right. $$
 Let $\tau\in T$. To describe the orbit of a point $x\in[0,1]$ in a decomposition we define the function $\gamma_{\tau}: X\times [0,1]\to [0,1]$ as 
 $$\gamma_{\tau}(\underline\varphi,x)=O\circ\pi^{\tau}(\underline\varphi)(x).$$ 

Let $\underline f=\left({y}_1,{y}_2,{y}_3,{y}_4,{y}_5,{\underline\varphi},
{\underline\varphi}^{l},{\underline\varphi}^{r}\right)\in L_0$. The renormalization of $\underline f$ is
$$R\underline f=\left(\tilde{y}_1,\tilde{y}_2,\tilde{y}_3,\tilde{y}_4,\tilde{y}_5,\tilde{\underline\varphi},
\tilde{\underline\varphi}^{l},\tilde{\underline\varphi}^{r}\right)$$ which is defined in the following way. Let $f=O\left(\underline f\right)=\left({y}_1,{y}_2,{y}_3,{y}_4,{y}_5,{O\underline\varphi},
{O\underline\varphi}^{l},{O\underline\varphi}^{r}\right)$,
then 
$$Rf=\left(\tilde{y}_1,\tilde{y}_2,\tilde{y}_3,\tilde{y}_4,\tilde{y}_5,O\tilde{\underline\varphi}, 
O\tilde{\underline\varphi}^{l},O\tilde{\underline\varphi}^{r}\right).$$ This defines the $\tilde{y}_k$ of $R\underline f$. It is left to define $\tilde{\underline\varphi}$, $\tilde{\underline\varphi}^l$ and $\tilde{\underline\varphi}^r.$ The definition is the same as the definition of $\tilde{\varphi}$, $\tilde{\varphi}^l$ and $\tilde{\varphi}^r$ except that the diffeomorphisms are now decomposed diffeomorphisms, see Lemma \ref{ss}.
\\
Let $\tau\in T$, then $$y_1(\tau)=\gamma_{\tau}\left(\underline\varphi^{l},y_1\right),$$ and  
$$\tilde{\varphi}_\tau= Z_{\left[y_1(\tau), 1\right]}\varphi^{l}_\tau.$$ 
\\
Let $\tau\in T$, define $$\varphi^{-1}\circ q_s^{-1}\left(1-e^{y_2}\right)(\tau)=\gamma_{\tau}\left(\underline{\varphi},\left(O\underline\varphi\right)^{-1}\circ q_s^{-1}\left(1-e^{y_2}\right)\right),$$ and  
$$\tilde{\varphi}^{l}_{\tau}=\left\{
    \begin{aligned}
    &  Z_{\left[\varphi^{-1}\circ q_s^{-1}\left(1-e^{y_2}\right)(\theta\tau),1\right]}\varphi_{\theta\tau}  & \tau<\frac{1}{2}\\
      &   Z_{\left[q_s^{-1}\left(1-e^{y_2}\right),1\right]}q_s& \tau=\frac{1}{2}\\
    & \varphi^r_{\theta\tau} & \tau>\frac{1}{2}
       \end{aligned} \right.$$
where $s=e^{{y_5}/{\ell-1}}.$
\\
Let $\tau\in T$, define $$\varphi^{-1}\circ q_s^{-1}\left(y_1e^{y_2}e^{y_3}\right)(\tau)=\gamma_{\tau}\left(\underline{\varphi},\left(O\underline\varphi\right)^{-1}\circ q_s^{-1}\left(y_1e^{y_2}e^{y_3}\right)\right),$$ and  
$$\tilde{\varphi}^{r}_{\tau}=\left\{
    \begin{aligned}    
   & Z_{\left[0,\varphi^{-1}\circ q_s^{-1}\left(y_1e^{y_2}e^{y_3}\right)(\theta\tau)\right]}\varphi_{\theta\tau} & \tau<\frac{1}{2}\\      
  &  Z_{\left[0,q_s^{-1}\left(y_1e^{y_2}e^{y_3}\right)\right]}q_s& \tau=\frac{1}{2}\\
    &  Z_{\left[0, y_1(\theta\tau)\right]}\varphi^l_{\theta\tau} & \tau>\frac{1}{2}      
      \end{aligned} \right.$$

\bigskip

The renormalization operator $R:L_0\to L$  is now defined. 

\begin{defin}
A map $\underline{f}\in L$ is $\infty$-renormalizable if for every $n\geq 0$, $R^n \underline{f}\in L_0$. The set of $\infty$-renormalizable functions is denoted by $W\subset L$.
\end{defin}

Renormalization on $L_0$ is naturally defined to be the lift of renormalization on $\L_0$. Namely,
\begin{lem}
The renormalization operator on $L_0$ commutes with the renormalization operator on $\L_0$ under the composition $O$, i.e $$O\circ R=R\circ O.$$ Moreover 
$$W=O^{-1}(\W).$$
\end{lem}

\section{The manifold structure of the Fibonacci class}\label{manifold1}
In this section we prove that the class of Fibonacci maps is a $\Cuno$
codimension one manifold. 
\begin{theo}\label{manifold}
 $\W$ is a $\Cuno$ codimension one  manifold in $\L^{4+\epsilon}$.
\end{theo}
In order to apply Theorem \ref{InvMan} we use the following notation. 
Let $B_1=\R^4\times X\times X\times X$ and $B_0=\R^4\times X_2\times X_2\times X_2$. Observe that,
$$
R: L_0\subset 
\R\times B_1\to \R \times  B_1\subset \R\times B_0.
$$ 
The renormalization operator $R$ does not globally satisfies the hypothesis of Theorem \ref{InvMan}. However these hypothesis are satisfied on a domain $D$ introduced in the following. Let $1\geq\epsilon^*>0$, $C^*\geq 1$ and $C_u^*\geq 1$. A map $\underline f=\left({y}_1,{y}_2,{y}_3,{y}_4,{y}_5,{\underline\varphi},
{\underline\varphi}^{l},{\underline\varphi}^{r}\right)\in D_{\epsilon^*,C^*, C_u^*}$ if the following holds.
\\
Express the vector $w=(y_2,y_3,y_4,y_5)$ in the eigenvectors of the matrix $M$ as defined in Lemma \ref{eighvector},
 $$
 w=C_uE_u+C_sE_s+C_-E_-+C_0E_0.
 $$
Define $U=U_{\epsilon^*,C^*, C_u^*}\subset B_1$ by 
$$\left\{\begin{aligned}
 &C_u<  -C^*_u,\\
 &|C_0|, |C_s|<C^*,\\
& 16|\underline\varphi |+8
|{\underline\varphi}^{l}|+3|{\underline\varphi}^{r}|<\epsilon^*,
\end{aligned}\right.
$$
and  $\partial_\pm: U\to  \R$ by
$$\left\{\begin{aligned}
&\partial_+(b)=\left(\frac{3}{2\ell}e^{y_2}\right)^\frac{1}{\ell},\\
&\partial_-(b)=\left(\frac{1}{4\ell}e^{y_2}\right)^\frac{1}{\ell},
\end{aligned}\right.
$$
where $b=\left({y}_2,{y}_3,{y}_4,{y}_5,{\underline\varphi},
{\underline\varphi}^{l},{\underline\varphi}^{r}\right)$.
The map $\underline f=\left(y_1, b\right)$ is in $D_{\epsilon^*,C^*, C_u^*}$ if 
$$\left\{\begin{aligned}
&b\in U,\\
&\partial_-(b)\leq y_1\leq \partial_+(b).
\end{aligned}\right.
$$
In the sequel, we supress the indices and use the notation $D=D_{\epsilon^*,C^*, C_u^*}$.
 Observe that $D\subset L_0$ when $C^*_u$ is large enough.

\subsection{Jump-out-differentiability and structure of the derivatives}\label{jumpoutdiff} 
\begin{prop} \label{jumpoutdiffprop}
  Let $D=D_{\epsilon^*,C^*, C_u^*}$ be a domain or a closed bounded set, then the map $R:D\to L\subset\R\times B_1\subset \R\times B_0$ is jump-out-differentiable.
\end{prop}
\begin{proof}
By Propositions \ref{Aderivative}, \ref{Bsderivative}, \ref{Blderivative}, \ref{Csderivative}, \ref{Clderivative}, \ref{Crderivative}, \ref{Dslderivative}, \ref{Dlsderivative}, \ref{Dlrderivative}, \ref{Drsderivative}, \ref{Drlderivative} we get that 
$$
DR_{\underline{f}}: \R^5\times X_2\times X_2\times X_2\to \R^5\times X_2\times X_2\times X_2
$$
as described by (\ref{matrix}), is bounded and it depends continuously on $\underline f$. The same propositions imply that 
$$
\lim_{\left| \Delta\underline f\right|\to 0}\frac{\left|R\left(\underline f+\Delta\underline f\right)-\left[R\underline f+DR_{\underline f}\left(\Delta\underline f\right)\right]\right|_2}{\left|\Delta\underline f\right|}=0.
$$
This shows that $R:D\to L$ is jump-out differentiable.
\end{proof}

\begin{prop} There exists $E>0$ and $0\leq\kappa<1$ such that every domain $D_{\epsilon^*,C^*, C_u^*}$ has the following property. Let $\underline f=\left({y}_1, b\right)\in D_{\epsilon^*,C^*, C_u^*}$ with $R\underline f=\left(\tilde y_1, \tilde b\right)$ then $\left(\Delta\tilde y,\Delta\tilde b\right)=DR_{\underline f}\left(\Delta y,\Delta b\right)$ satisfies,
\begin{equation*}
\left\{\begin{aligned}
&\Delta\tilde y=\frac{E_{\underline f}}{y_1}\Delta y+O\left(\Delta b\right),\\
&&\\
&\|\Delta\tilde b \|_0=O\left(\frac{1}{\tilde y_1^{\kappa}}|\Delta y|+\|\Delta b\|_0\right),
\end{aligned}\right.
\end{equation*}
with ${1}/{E}<|E_{\underline f}|< E$. 
\end{prop}
\begin{proof}
Consider $D=D_{1,1,1}$. From Propositions \ref{Aderivative}, \ref{Bsderivative}, \ref{Blderivative}, \ref{Csderivative}, \ref{Clderivative}, \ref{Crderivative}, \ref{Dslderivative}, \ref{Dlsderivative}, \ref{Dlrderivative}, \ref{Drsderivative}, \ref{Drlderivative} we get that 
\begin{eqnarray*}
\frac{\partial\tilde y_1}{\partial b}&=&O(1),\\
\frac{\partial\tilde b}{\partial b}&=&O(1),
\end{eqnarray*}
and from Lemma \ref{partial y1 partial yj} we get that there exists $E>0$ such that $$\frac{1}{E}\frac{1}{y_1}\leq\frac{\partial\tilde y_1}{\partial y_1}\leq E\frac{1}{y_1}.$$
By Propositions \ref{Csderivative}, \ref{Clderivative}, Lemma \ref{partial y2 partial yj}, \ref{partial y3 partial yj}, \ref{partial y4 partial yj}, \ref{partial y5 partial yj}, \ref{partial phi r min 12 partial y}, \ref{partial phi r12 partial y}, \ref{partial phi r max 12 partial y} we get 
$$\frac{\partial\tilde b}{\partial y_1}=O\left(\frac{1}{y_1}+S_2^{\frac{1}{\ell^2}}S_3^{\frac{1}{\ell}}S_5^{-\frac{2}{\ell-1}}\right).$$
Observe that, by point $2$ of Lemma \ref{ss} and Lemma \ref{D111} we get 
$$
\tilde S_1S_2^{\frac{1}{\ell^2}}S_3^{\frac{1}{\ell}}S_5^{-\frac{2}{\ell-1}}=O\left(S_2^{\frac{\ell^2+\ell+1}{\ell^2}}S_3^{\frac{\ell+1}{\ell}}S_5^{\frac{-\ell^2-2\ell+1}{\ell(\ell-1)}}\right).
$$
Expressing the $S$-coordinates in terms of the coordinates in the eigenbasis of the matrix $M$, see Lemma \ref{matrix}, we obtain
$$ 
\log\left[\tilde S_1S_2^{\frac{1}{\ell^2}}S_3^{\frac{1}{\ell}}S_5^{-\frac{2}{\ell-1}}\right]= C_u\left[\frac{\lambda_u^2\left(\ell^2+\ell+1\right)+\lambda_u\left(-2\ell+1\right)-2\ell}{\ell^2\lambda_u\left(1+\lambda_u\right)}\right]+O\left(C\right).
$$
A calculation shows that the coefficient of $C_u$ is positive. A similar expression holds for $
\log\left[\tilde S_1^{\kappa}S_2^{\frac{1}{\ell^2}}S_3^{\frac{1}{\ell}}S_5^{-\frac{2}{\ell-1}}\right]$ and when $0<\kappa<1$ is close enough to $1$,  the coefficient of $C_u$, which depends continuously on $\kappa$, is again positive. Because $C_u\leq -1$ we get
\begin{equation}\label{kappass}
\tilde S_1^{\kappa}S_2^{\frac{1}{\ell^2}}S_3^{\frac{1}{\ell}}S_5^{-\frac{2}{\ell-1}}=O(1).
\end{equation}
Using again point $2$ of Lemma \ref{ss} and Lemma \ref{D111}  we get 
$$
\frac{\tilde y_1}{y_1}=O\left(S_2^{\frac{1}{\ell}} S_3 S_5^{-1}\right).
$$
Expressing the $S$-coordinates in terms of the coordinates in the eigenbasis of the matrix $M$, see Lemma \ref{matrix}, and because $C_u\leq -1$ we obtain
$$
\log\left[\frac{\tilde y_1}{y_1}\right]\leq -\left[\frac{\lambda_u+1-\ell}{\ell\left(1+\lambda_u\right)}\right]+O\left(C\right).
$$
As before, for $0<\kappa<1$ close enough to $1$ we also have that 
\begin{equation}\label{kappay}
\frac{1}{y_1}=O\left(\frac{1}{\tilde y_1^{\kappa}}\right).
\end{equation}
Finally, by (\ref{kappass}) and (\ref{kappay}) we get
$$\frac{\partial\tilde b}{\partial y_1}=O\left(\frac{1}{\tilde y_1^{\kappa}}\right).$$
The uniform bounds follow from the fact that $D_{\epsilon^*,C^*, C_u^*}\subset D_{1,1,1}$.
\end{proof}

\begin{prop}\label{xiexpansion} 
For every $\epsilon^*<1$, $C^*>1$ and $\xi>0$, if $\underline f=\left({y}_1, b\right)\in D_{\epsilon^*,C^*, C_u^*}$ with $R\underline f=\left(\tilde y_1, \tilde b\right)$, then $R$ has vertical $\xi$-expansion for $C_u^*>1$ large enough, i.e.
\begin{eqnarray*}
\frac{y_1}{\tilde y_1^{\kappa}}&\geq &\xi.
\end{eqnarray*}

 \end{prop}
 \begin{proof}
 Using point $2$ of Lemma \ref{ss} and Lemma \ref{D111}  we get 
$$
\frac{\tilde y_1}{y_1}=O\left(S_2^{\frac{1}{\ell}} S_3 S_5^{-1}\right).
$$
Expressing the $S$-coordinates in terms of the coordinates in the eigenbasis of the matrix $M$, see Lemma \ref{matrix}, we obtain
$$
\log\left[\frac{\tilde y_1}{y_1}\right]\leq -C_u\left[\frac{\lambda_u+1-\ell}{\ell\left(1+\lambda_u\right)}\right]+O\left(C\right).
$$
For $0<\kappa<1$ close enough to $1$ and $C_u^*>1$ large enough, we also have that 
$$
\frac{\tilde y_1^{\kappa}}{y_1}\leq\frac{1}{\xi}.
$$
 \end{proof}
 \begin{prop}\label{etadominationg} 
For every $\epsilon^*<1$, $C^*>1$ and $\eta>0$, if $\underline f=\left({y}_1, b\right)\in D_{\epsilon^*,C^*, C_u^*}$ with $R\underline f=\left(\tilde y_1, \tilde b\right)$, then $R$ has $\eta$-dominating horizontal expansion for $C_u^*>1$ large enough, i.e.
\begin{eqnarray*}
\frac{y_1^2}{\tilde y_1^{\kappa}}&\leq &\eta.
\end{eqnarray*}
 \end{prop}
 \begin{proof}
 Using point $2$ of Lemma \ref{ss} and Lemma \ref{D111}  we get 
$$
\frac{y_1^2}{\tilde y_1}=O\left(S_2^{\frac{\ell-1}{\ell}} S_3^{-1} S_5\right).
$$
Expressing the $S$-coordinates in terms of the coordinates in the eigenbasis of the matrix $M$, see Lemma \ref{matrix}, we obtain
$$
\log\left[\frac{y_1^2}{\tilde y_1}\right]\leq -C_u\left[\frac{\lambda_u(\ell-1)+2\ell-1}{\ell\left(1+\lambda_u\right)}\right]+O\left(C\right).
$$
For $0<\kappa<1$ close enough to $1$ and $C_u^*>1$ large enough, we also have that 
$$
\frac{y_1^2}{\tilde y_1^{\kappa}}\leq\eta.
$$
 \end{proof}


 \subsection{Topological hyperbolicity}\label{tophyp}
In this section we prove that, for an appropriate choice of $\epsilon^*, C^*, C_u^*$,  $R:D_{\epsilon^*,C^*, C_u^*}\to L$ is topologically hyperbolic. The choice needs some preparation.
 \\
The largest eigenvalue of 
$$Q=\left(\begin{matrix}
    
    0  & 1 & 0\\
     \frac{1}{16} & 0 & 1\\
    \frac{1}{16} &  \frac{1}{16} & 0\\
      \end{matrix}\right)
$$
  is $\frac12$ with eigenvector $\underline E=(16,8,3)$.    Let $\underline f=\left({y}_1,{y}_2,{y}_3,{y}_4,{y}_5,{\underline\varphi},
{\underline\varphi}^{l},{\underline\varphi}^{r}\right)\in L_0$ and $Rf=\left(\tilde{y}_1,\tilde{y}_2,\tilde{y}_3,\tilde{y}_4,\tilde{y}_5,O\tilde{\underline\varphi}, 
O\tilde{\underline\varphi}^{l},O\tilde{\underline\varphi}^{r}\right)$. Define
  $$
  m=m(\underline{f})=(|{\underline\varphi}|,
|{\underline\varphi}^{l}|,|{\underline\varphi}^{r}|),
  $$ 
  and $\tilde{m}=m(R\underline f)$.

\begin{lem}\label{distcontraction} For every $\epsilon^*<1$ and $C^*\geq 1$ we have
$$
\tilde{m}\le Qm+ \frac{\epsilon^*}{33}\left(\begin{matrix}
0\\
1\\
1\\
\end{matrix}\right),
$$
when $C_u^*$ is large enough.
In particular,
$$
(E,\tilde{m})\le \frac12 (E,m) +\frac{1}{3}\epsilon^*.
$$
\end{lem}

 \begin{proof} We estimate the norms $\tilde{w}$, $\tilde{w}^l$ and $\tilde{w}^r$. Observe,
 $$
\left|\left[y_1(\tau),1\right]\right|\le 1. 
 $$
 This implies, using Lemma \ref{normzoom} and the definition of renormalization,
 \begin{equation}\label{w}
 \tilde{w}\le w^l.
 \end{equation}
 Moreover, for $C_u^*>1$ large enough we have   
 \begin{equation}\label{ZS2m}
|[ \varphi^{-1}\circ q_s^{-1}\left(1-e^{y_2}\right)(\tau),1]|=O\left(S_2(1+\epsilon^*)\right)\le \frac{1}{16}.
 \end{equation}
 The previous inequality, Lemma \ref{normzoom} and Lemma \ref{C2} imply,
 \begin{equation}\label{wl}
 \tilde{w}^l\le \frac{1}{16} w+w^r+\frac{\epsilon^*}{33}.
 \end{equation}
For $C_u^*>1$ large enough and Lemma \ref{s1235} we get
 \begin{eqnarray}\label{ZSSm}\nonumber
 \varphi^{-1}\circ q_s^{-1}\left(y_1e^{y_2}e^{y_3}\right)(\theta\tau)&=&
 O\left(\frac{S_1S_2S_3}{S_5}\right)=O\left(S_2^{1+\frac{1}{\ell}}S_3 S_5^{-1}\right)\\&=&
O\left(S_2^{1+\frac{1}{\ell}}S_3 S_5^{-\frac{\ell}{\ell-1}}\right)\le \frac{1}{16},
 \end{eqnarray}
 where we used ${\ell S_1^{\ell}}/{S_2}\leq {3}/{2}$ and $\log\left({S_1S_2S_3}/{S_5^{\frac{\ell}{\ell-1}}}\right)\leq -C_u^*\left({\lambda_u^2(\ell+1)-1}/{\ell\lambda_u(1+\lambda_u)}\right)+O(C^*)$, see Lemma \ref{eighvector}. Furthermore, for $C_u^*>1$ large enough,
  \begin{equation}\label{Zy}
 |[0,y_1(\theta \tau)]|=O(y_1)=O\left(S_2^{\frac{1}{\ell}}\right)\le \frac{1}{16},
 \end{equation}
and by Lemma \ref{C9} we obtain
 \begin{equation}\label{wr}
 \tilde{w}^r\le \frac{1}{16} w+\frac{1}{16}w^l+\frac{\epsilon^*}{33}.
 \end{equation}
 Estimates (\ref{w}), (\ref{wl}), and (\ref{wr}) conclude the proof of the lemma.
 \end{proof}

Let $\underline f_\pm=(\partial_\pm(b),b)\in \partial D$ with $R\underline f_\pm=(\tilde{y}_\pm, \tilde{b}_\pm)$. The following holds.
 \begin{lem}\label{overlap} For every  $C^*\geq 1$  
  we have
 $$
\tilde{y}_+\le -\frac18 \text{ and } \tilde{y}_-\ge \frac{7}{16},
 $$
 when  $C_u^*>1$ is large enough and $\epsilon^*<1$ is small enough.
\end{lem}

 \begin{proof} Observe that
 $$
\frac{S_2}{1-\left(\varphi^{-1}\circ q_s^{-1}\right)\left(1- S_2\right)}=\ell(1+O(\epsilon^*)),
 $$
 and 
 $$
 \frac{\left(\varphi^{l}\left( S_1\right)\right)^{\ell}}{\ell S_{1}^{\ell}}=1+O(\epsilon^*).
 $$
 Then for $\epsilon^*$ small enough we get, by using point 1 of Lemma \ref{ss},
 $$
 \tilde{y}_+\le 1-\frac32\left(1-\frac14\right)=-\frac18,
 $$
 and
 $$
 \tilde{y}_-\ge 1-\frac14\left(2+\frac14\right)\ge \frac{7}{16}.
 $$
 \end{proof}

\begin{prop} \label{proptophyp} For $C^*>1$ large enough $R: D_{\epsilon^*, C^*,C_u^* }\to \R\times B_1$ is topologically hyperbolic when $C_u^*>1$ is large enough and $\epsilon^*<1$ is small enough.
\end{prop} 

\begin{proof} Apply Lemma \ref{ss} and Lemma \ref{squarefactors} to obtain
 \begin{eqnarray*}
 \tilde{C}_u&\leq &\lambda_u C_u+\log 2,\\
 \tilde{C}_s&\leq&\lambda_s C_s+\log 2,\\
 \tilde{C}_0&\leq&\log 2.
 \end{eqnarray*}
 This implies, when $C^*>1$ and $C_u^*>1$ are large enough, that
 $$
 \tilde{C}_u\le -C^*_u \text{ and } |\tilde{C}_0|, |\tilde{C}_s|\le C^*.
 $$
 From Lemma \ref{distcontraction} we get
 $$
 (E, \tilde{w})\le \frac12 \epsilon^*+\frac{1}{3}\epsilon^*\le \epsilon^*.
 $$
Finally when $C_u^*>1$ is large enough, for every $b\in U_{\epsilon^*, C^*,C_u^* }$ 
 $$
 0\le \partial_-(b)< \partial_+(b)\le \frac14.
 $$   
Lemma \ref{overlap} concludes the proof that $R:D\to \R\times B_1$ is topologically hyperbolic. 
 \end{proof}

 \subsection{The Fibonacci class}
 In this section we prove Theorem \ref{manifold} which states that the class of Fibonacci maps in $\L^{4+\epsilon}$ is a $\Cuno$ codimension one manifold.
 
 Consider $B_1^{4+\epsilon}=\R^4\times X_{4+\epsilon}\times X_{4+\epsilon}\times X_{4+\epsilon}$ and observe that $\left(B_1^{4+\epsilon}, |\cdot |_{3+\epsilon}\right)\subset B_1$ as a normed vector space. Similarly, $\left(B_1, |\cdot |_{2}\right)$ is a normed vector space in $B_0$. Define $$L_{3+\epsilon}^{4+\epsilon}=\Sigma^{(Y)}\times X_{4+\epsilon}\times X_{4+\epsilon}\times X_{4+\epsilon}\subset L\subset\R\times B_1,$$ and $$L_2^{3+\epsilon}=\Sigma^{(Y)}\times X_{3+\epsilon}\times X_{3+\epsilon}\times X_{3+\epsilon}\subset \R\times B_0.$$ By Proposition \ref{jumpoutdiffprop}, $$R:L_{3+\epsilon}^{4+\epsilon}\to L_2^{3+\epsilon}$$ is continuously differentiable.
Apply Theorem \ref{InvMan} to get $D\subset [0,1]\times U$ with $U\subset B_1$ maximal and a $\Cuno$ function $\hat\omega^*:U\to\R$ such that the invariant set\footnote{See Lemma \ref{omegaW}}, $$\omega^*=\left\{p\in D | \forall n\in\N\text{ }R^n(p)\in D\right\}$$ is the graph of $\hat\omega^*$. 
Consider $$W=\left\{\underline f\in L | \exists n\in\N \text{ s.t. } R^n\underline f\in\omega^*\right\},$$ and notice that 
\begin{itemize}
\item[-]$\omega^*$ is a $\Cuno$ graph in $W$,
\item[-]for all $n\in\N$, $R^n\left(W\right)\subset W$.
\end{itemize}

\begin{lem}\label{transversaldeformation} Given $f\in \W\subset\L^{4+\epsilon}$ there exists a family $f_t\in\L^{4+\epsilon}$, smooth in $t\in (-1,1)$, with $f_0=f$ and $b_0>0$ such that 
$$
\{f'\in \L^{4+\epsilon} | \text{ } \exists t\ne 0 \text{ with } |f'-f_t|_{\C0}\leq b_0\cdot t\}\cap \W =\emptyset.
$$
\end{lem}

\begin{proof} Let $f\in\W$ with $x_1=x_1(f)$ and $\pi:\R\to [x_1,1)$ be a piece wise smooth projection with period $1-x_1$ such that the lift $F:\R\to \R$ of $f$ is smooth. The proof of Lemma \ref{alpha} assures that such a projection $\pi$ exists. Moreover, we may assume that $\pi: [0,1)\to [0,1)$ is identity and $\pi:[x_1,0)\to [x_1,0)$ is a diffeomorphism. Let
$F_t:\R\to \R$ be given by an added rotation
$$
F_t(x)=F(x)+t.
$$
Then for every lift $G$ of a map $g\in \L^{4+\epsilon}$ with period $1-x_1$ and 
\begin{equation}\label{GFt}
|G(x)-F_t(x)|\le \frac12 t,
\end{equation} 
 the circle map $g$ is not a Fibonacci map. 
 \\
 Using an appropriate projection of period $1-x_1$, each map $F_t$ is the lift of a map $f_t\in\L^{4+\epsilon}$. This needs some preparation. Observe that $J=[x_3(f),x_4(f)]$ is the flat interval of any $F_t$. Let 
 $$
 o(t)=F_t(J)=t, \textbf{ } x_1(t)=F_t(o(t)) \textbf{ } 1(t)=x_1(t)+1-x_1.
 $$
The interval $[x_1(t), 1(t))$ is a fundamental domain for $F_t$. Let $\pi_t:\R\to \R$ be the $1-x_1$ periodic piece wise smooth map given by
$$
\pi_t(x)=
\left\{\begin{aligned}
&\pi\left(\frac{x-o(t)}{1(t)-o(t)}\right)&\text{ when } x\in [o(t), 1(t))\\
&\pi\left(x_1(f) \frac{x-o(t)}{x_1(t)-o(t)}\right) &\text{ when } x\in [x_1(t), o(t))\\
\end{aligned}\right.
$$
Consider the smooth family $f_t\in\L^{4+\epsilon}$ with $f_t:[x_1,1)\to [x_1,1)$ defined by 
$$
f_t=\pi_t\circ F_t\circ \pi_t^{-1},
$$
with $t$ in a small interval centered around $0$. For $b_0>0$ small enough we have that every $f'\in \L^{4+\epsilon}$ with $|f'-f_t|_{\C0}\le b_0 t$ there exists a  $1-x_1$ periodic lift $F'$ with 
$|F'(x)-F_t(x)|\le (1/2) t$. According to \ref{GFt}, $f'$ is not a Fibonacci map. Hence $f'\notin\W$.
\end{proof}

Define the embedding $i:\L^{4+\epsilon}\to L$ as follows. If $f=\left(y_1, y_2, y_3, y_4, y_5, \varphi, \varphi^l, \varphi^r\right)\in\L^{4+\epsilon}$, then $$i(f)=\left(y_1, y_2, y_3, y_4, y_5, \underline\varphi, \underline\varphi^l, \underline\varphi^r\right),$$ with $$\varphi_{\frac{1}{2}}=\varphi,\text{ }\varphi^l_{\frac{1}{2}}=\varphi^l, \text{ }\varphi^r_{\frac{1}{2}}=\varphi^r,$$ and $\varphi_{\tau}=\varphi^l_{\tau}=\varphi^r_{\tau}=0$ for $\tau\neq{1}/{2}$. This is a $\Cuno$ map with $O\circ i=id$.

\begin{lem}\label{transversality}
Let $\underline f\in W\cap L_{3+\epsilon}^{4+\epsilon}$ such that $W$ is locally a $\Cuno$ codimension 1 manifold in $L$ containing $R\underline f$, then $$DR_{\underline f}\pitchfork T_{R\underline f}W.$$ 
\end{lem}
\begin{proof} Let $f=O(\underline f)$. Consider the family $f_t$ through $f$ from Lemma \ref{transversaldeformation}. The Lipschitz continuity of $O$, see Proposition\ref{comp}, implies that for $b>0$ small enough
$$
\{\underline{f}'\in L | \text{ } \exists t\ne 0 \text{ with } |\underline{f}'-i(f_t)|_{\C0}\leq b\cdot t\}\cap W =\emptyset.
$$
Moreover, for every $\underline{f}'\in L$ with $ |\underline{f}'-i(f_t)|_{\C0}\leq b\cdot t$ for some $t\ne 0$ we have 
\begin{equation}\label{RfnotW}
R\underline{f}'\notin W.
\end{equation}
Let $\Delta v=\frac{\partial( i( f_t))}{\partial t}$. Then 
$DR_{\underline f}(\Delta v)\in \R^5\times X_{3+\epsilon}\times X_{3+\epsilon}\times X_{3+\epsilon}$ and because (\ref{RfnotW})
$$
DR_{\underline f}(\Delta v)  \pitchfork T_{R\underline f}W. 
$$
\end{proof}

\begin{cor}\label{transi} Let $f\in \W\cap \L_{3+\epsilon}^{4+\epsilon}$ such that $W$ is locally a $\Cuno$ codimension 1 manifold in $L$ containing   $i(f)$, then $$Di_{f}\pitchfork T_{i(f)} W.$$ 
\end{cor}

\begin{prop}\label{WC1} Let $\underline f\in W\cap L_{3+\epsilon}^{4+\epsilon}$ then $W$ is locally a $\Cuno$ codimension 1 manifold in $L$ containing $\underline f$.
\end{prop}
\begin{proof}
By contradiction, suppose that there exists $\underline f\in W\cap L_{3+\epsilon}^{4+\epsilon}$ such that $W$ is not locally a $\Cuno$ codimension 1 manifold in $L$ containing $\underline f$. Then we may assume that $W$ is locally a $\Cuno$ codimension 1 manifold containing $R\underline f$. Such an $\underline f$ exists because $\omega^*$ is a $\Cuno$ graph in $W$.
Let $n\ge 1$ such that $R^n\underline{f}\in \omega^*$. The derivative of $R^n$ extends to a bounded operator $DR^n_{\underline{f}}: \R^5\times X_{2}\times X_{2}\times X_{2}\to \R^5\times X_{2}\times X_{2}\times X_{2}$ because $R$ is jump-out-differentiable. 
Hence the tangent space at $R\underline f$ to $W$, $T_{R\underline f}W$, extends to a plane in $\R^5\times X_{2}\times X_{2}\times X_{2}$. Namely, 
$T_{R\underline{f}}W=(DR^n_{\underline{f}})^{-1}(T_{R^n\underline{f}} \omega^*)$. The contradiction comes straight from Lemma \ref{transversality} and Lemma \ref{pullbacklemma}.
\end{proof}

\begin{lem}\label{DegcondinL}
If $f\in\W$, then $i(f)\in W$.
\end{lem}
\begin{proof}
Fix $\epsilon^*$, $C^*$ and $C_u^*$ as in Proposition \ref{proptophyp} and let $f\in\W$. The aim is to prove that there exists $n\in\N$ such that $R^n i(f)\in\omega^*$. Proposition \ref{superformula} ensures that, for $n$ large enough, $C_u\left(R^ni(f)\right)=C_u\left(R^nf\right)<-C_u^*$ and $\left|C_s\left(R^ni(f)\right)\right|=\left|C_s\left(R^nf\right)\right|<C^*$.
\\
 It remains to prove that, for $n$ large enough, $\left(E, w(R^n i(f))\right)\leq\epsilon^*$. For all $n\in\N$, denote by $ w_n=w\left(R^ni(f)\right)$, $ w^l_n=w^l\left(R^ni(f)\right)$ and $ w^r_n=w^r\left(R^ni(f)\right)$. By the definition of renormalization on $L$, see Section \ref{RenonL}, and Lemma \ref{normzoom} we get 
$$|w_{n+1}|+|w^l_{n+1}|+|w^r_{n+1}|\leq |w_{n}|+|w^l_{n}|+\left|w^r_{n}\right|+\left| Z_{\left[q_s^{-1}\left(1-S_2\right),1\right]}q_s\right|+\left|Z_{\left[0,q_s^{-1}\left(S_1S_2S_3\right)\right]}q_s\right|.$$ 
Applying (\ref{ZS2}) and (\ref{ZSS}) we get 
$$|w_{n+1}|+|w^l_{n+1}|+|w^r_{n+1}|\leq |w_{n}|+|w^l_{n}|+\left|w^r_{n}\right|+O\left(S_2\right)+O\left(S_1 S_2 S_3 S_5^{-\frac{\ell}{\ell-1}}\right).$$ 
Writing the affine terms in the eigenvector base, see Lemma \ref{eighvector}, we notice that they tend to zero double exponentially fast. As consequence,
$$|w_{n}|+|w^l_{n}|+\left|w^r_{n}\right|\leq O(1).$$ By the definition of renormalization on $L$, see Section \ref{RenonL}, Lemma \ref{normzoom}, (\ref{ZS2}), (\ref{ZSS}), (\ref{ZS2m}), (\ref{ZSSm}) and (\ref{Zy}) we get
$$
\left\{\begin{aligned}
|w_{n+1}|\leq& |w^l_{n}|,\\
|w^l_{n+1}|\leq& O\left(S_2\right)|w_{n}|+ |w^r_{n}|+O\left(S_2\right),\\
|w^r_{n+1}|\leq& O\left(S_1 S_2 S_3 S_5^{-\frac{\ell}{\ell-1}}\right)|w_{n}|+O\left(S_2^{\frac{1}{\ell}}\right) |w^l_{n}|+O\left(S_1 S_2 S_3 S_5^{-\frac{\ell}{\ell-1}}\right).\\
\end{aligned}\right.
$$
As before, the coefficients in the orders decay double exponentially fast. Iterating the above estimates three times, we obtain 
$$|w_{n+1}|+|w^l_{n+1}|+|w^r_{n+1}|\leq\lambda^{n-2}\left(|w_{n-2}|+|w^l_{n-2}|+\left|w^r_{n-2}\right|\right)+\lambda^{n-2},$$ where $\lambda<1$. The lemma follows.
\end{proof}
By Corollary \ref{transi}, Proposition \ref{WC1},  Lemma \ref{DegcondinL} and Lemma \ref{pullbacklemma}, $i^{-1}\left( W\right)=\W$ is a $\Cuno$ codimension one manifold in $\L$. This concludes the proof of Theorem \ref{manifold}.

\section{Rigidity}\label{rigidity}
For any function $f\in\W$, the non-wandering set\footnote{The non wandering set of a map $f$ is the set of the points $x$ such that for any open neighborhood $V\ni x$ there exists an integer $n>0$ such that the intersection of $V$ and $f^n(V)$ is non-empty.} of $f$ is $K_f=\S\setminus\cup_{i\geq 0}f^{-i}(U_f)$, where  $U_f=[x_3(f),x_4(f)]$ is the flat interval of $f$, see \cite{5aut}, \cite{M-vS-dM-M}. 
\begin{defin}
Let $f\in\L$. The set $A_f$ is the attractor of $f$ if $A_f$ is the limit set of every point.
\end{defin}
The following lemma holds. For its proof, the reader can refer to \cite{5aut} and \cite{M-vS-dM-M}.
\begin{lem}
Let $f\in\W$. The non-wandering set $K_f$ is a Cantor set and it is the attractor of the system.
\end{lem}
Let $f,g\in \W$ and let $h$ be an homeomorphism which conjugates $f$ and $g$. 
Observe that $h$ is such that $h(U_f)=U_g$ but $h$ is not uniquely defined inside $U_f$. However, $h_{|K_f}$ is uniquely defined. 
 Being interested in the geometry of the Cantor set $K_f$, from now on we
  only discuss the quality of $h_{|K_f}$ which is simply denote by $h$.
\begin{defin}
Let $K\subset\R$ be a Cantor set. A function $h:K\to\R$ is differentiable if there exists a function $Dh:K\to\R$ such that, for every $x_0\in K$,
$$h\left(x\right)=h\left(x_0\right)+Dh\left(x_0\right)\left(x-x_0\right)+o\left(\left|x-x_0\right|\right)$$
for every $x\in K$.
Moreover $h$ is $\mathcal C^{1}$ differentiable if $Dh$ is continuous and $h$ is $\mathcal C^{1+\beta}$ differentiable if $Dh$ is H\"older with exponent $\beta>0$.
\end{defin}
We are now ready to state the main theorem of this section.
\begin{theo}\label{LM}
Let $1<\ell<2$ and let $\beta=\frac{\lambda_u-1}{\lambda_u^4\ell}>0$. If $f,g\in\W$ with critical exponent $\ell$ and if $h$ is the topological conjugacy
between $f$ and $g$, then $h$ is a H\"older homeomorphism. Moreover the following holds.
\begin{eqnarray*}
 h \text{ is a Lipschitz homeomorphism} &\iff& C_u(f)=C_u(g),\\
 h \text{ is a } \mathcal C^1 \text{ diffeomorphism }&\iff& C_u(f)=C_u(g), C_-(f)=C_-(g),\\
 h \text{ is a } \mathcal C^{1+\beta} \text{ diffeomorphism}&\iff& C_u(f)=C_u(g), C_-(f)=C_-(g), C_s(f)=C_s(g).\\
\end{eqnarray*}
\end{theo}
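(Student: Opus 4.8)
The plan is to translate each of the four regularity properties of $h$ into the asymptotics of the renormalization scaling data $x_{i,n}$ recorded in Lemma~\ref{xs}. First I would recall that for $f\in\left[\infty-R\right]$ the circle carries a nested sequence of dynamical partitions $\mathcal P_n(f)$, whose atoms are the orbit pieces of the fundamental domains of $R^nf$ (so that $K_f=\bigcap_n\bigcup\mathcal P_n(f)$), with consecutive atoms of comparable length (real bounds, \cite{P1,5aut}); a topological conjugacy $h$ maps $\mathcal P_n(f)$ onto $\mathcal P_n(g)$ atom by atom, and I write $I'_n(x)=h(I_n(x))$ for the atom of $\mathcal P_n(g)$ corresponding to the atom $I_n(x)\in\mathcal P_n(f)$ containing $x$. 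In the presence of these bounded geometry properties the regularity of $h|_{K_f}$ is governed by the one--step ratio distortion: for an atom $I\in\mathcal P_n$ and a child atom $J\subset I$ in $\mathcal P_{n+1}$ put $\delta_n(I,J):=\log\frac{|h(J)|/|h(I)|}{|J|/|I|}$. I will invoke the standard Cantor--set criteria (de Melo--van Strien type): $h$ is Holder whenever $\log|I'_n(x)|\asymp\log|I_n(x)|$ uniformly; $h$ is bi-Lipschitz iff $\big|\sum_{k<n}\delta_k\big|$ is uniformly bounded; $h$ is a $\mathcal C^1$ diffeomorphism iff $\sum_k\delta_k$ converges uniformly to a continuous nonvanishing sum; and $h$ is a $\mathcal C^{1+\beta}$ diffeomorphism iff moreover $\big|\sum_{k\ge n}\delta_k\big|=O(|I_n|^{\beta})$.

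Next I would compute $\delta_n$. Every atom of $\mathcal P_n$, and every ratio of adjacent atoms, is an explicit uniformly smooth function of the coordinates $(x_{1,n},\dots,x_{4,n},\varphi^-_n,\varphi^{+_l}_n,\varphi^{+_r}_n)$ (Lemma~\ref{changexs}, Lemma~\ref{xtos}), and by Lemma~\ref{affdiffeo} the $\varphi$--parts contribute only a super-exponentially small distortion. Hence, for a fixed combinatorial type of the pair $(I,J)$, Lemma~\ref{xs} and Proposition~\ref{superformula} will give
$$\delta_n=(C_u(g)-C_u(f))\,a\,\lambda_u^{\,n}\;+\;(C_s(g)-C_s(f))\,b\,\lambda_s^{\,n}\;+\;(C_-(g)-C_-(f))\,c\,(-1)^n\;+\;O\!\big(\varepsilon_n\big),$$
where the constants $a,b,c$ depend only on the combinatorial type (they are built from the eigenvector entries of Lemma~\ref{eighvector} and the coefficients of Lemma~\ref{xs}), the common $w_{fix}$--parts cancelling, and $\varepsilon_n=O\!\big(\sqrt{e^{C_u(f)(e^u_2+e^u_3)\lambda_u^{\,n-3}}}\big)$ is super-exponentially small. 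What matters is that some combinatorial type has $a\neq0$; that the type carrying the scaling $x_{1,n}$ has $c\neq0$ (this is the only $x_{i,n}$ carrying a $C_-$--term, cf.\ the summand $-C_-(f)(-1)^n$ in Lemma~\ref{xs}); and that some type has $b\neq0$.

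The four equivalences then follow by reading off $\sum_k\delta_k$. \emph{Holder:} by Lemma~\ref{asymalpha}, $C_u(f),C_u(g)<0$, so for the shrinking atoms $\log|I_n(x)|=C_u(f)\kappa(x)\lambda_u^{\,n}(1+o(1))$ with $\kappa$ in a compact subset of $(0,\infty)$; hence $\log|I'_n(x)|/\log|I_n(x)|$ stays in a compact subset of $(0,\infty)$, and $h,h^{-1}$ are Holder (for atoms of size bounded below the bound is trivial). \emph{bi-Lipschitz:} if $C_u(f)\neq C_u(g)$ then for a type with $a\neq0$ the sum $\sum_{k<n}\delta_k$ has leading term $(C_u(g)-C_u(f))\,a\,\tfrac{\lambda_u}{\lambda_u-1}\lambda_u^{\,n}\to\pm\infty$, so $h$ is not bi-Lipschitz; conversely if $C_u(f)=C_u(g)$ the remaining contributions are uniformly bounded ($\lambda_s^{\,k}$ summable, $\sum(-1)^k$ with bounded partial sums, $\varepsilon_k$ summable), so $|h(I)|\asymp|I|$ on all atoms and, with the real bounds, $h$ is bi-Lipschitz. \emph{$\mathcal C^1$:} assume $C_u(f)=C_u(g)$ (else not even bi-Lipschitz); if $C_-(f)\neq C_-(g)$ then for a type with $c\neq0$ the partial sums $\sum_{k<n}\delta_k$ oscillate by a fixed amount, so $|h(I_n)|/|I_n|$ does not converge and $h\notin\mathcal C^1$; conversely if also $C_-(f)=C_-(g)$ then $\sum_k\delta_k$ converges absolutely and uniformly (only $\lambda_s^{\,k}$-- and $\varepsilon_k$--terms survive), the limit varies continuously with the atom, and $h$ is a $\mathcal C^1$ diffeomorphism. \emph{$\mathcal C^{1+\beta}$:} assume $C_u(f)=C_u(g)$ and $C_-(f)=C_-(g)$; if $C_s(f)\neq C_s(g)$ then for a type with $b\neq0$ one has $\big|\sum_{k\ge n}\delta_k\big|\asymp|\lambda_s|^n$, which dominates $|I_n|^{\beta}=e^{-\Theta(\lambda_u^{\,n})}$, so $h\notin\mathcal C^{1+\beta}$; conversely if all three invariants agree then $\big|\sum_{k\ge n}\delta_k\big|=O(\varepsilon_n)=O\!\big(e^{\frac12 C_u(f)(e^u_2+e^u_3)\lambda_u^{\,n-3}}\big)$, and comparing this exponent with that of $|I_n|^{\beta}$ for the fastest shrinking atoms — using $e^u_2=1$, $e^u_3=\frac{1}{1+\lambda_u}$ and $\ell\lambda_u^2=\lambda_u+1$ from Lemma~\ref{eighvector} — one checks that the comparison holds precisely for $\beta\le\frac{(1+e^u_3)(\lambda_u-1)}{2\lambda_u^2}$, so $h\in\mathcal C^{1+\beta}$ for this $\beta$.

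The hard part will be the quantitative side. Analytically one must set up the Cantor--set regularity criteria with the correct rates — in particular that super-exponential control of the one--step distortion $\delta_n$ together with the real bounds upgrades to $\mathcal C^{1+\beta}$ and not merely $\mathcal C^1$ — and carefully track the distortion of the diffeomorphic parts $\varphi^{\pm}_n$, which is exactly the role of Lemma~\ref{affdiffeo}. Combinatorially, the ``only if'' directions require exhibiting, for each successive obstruction ($C_u$, then $C_-$, then $C_s$), an explicit pair of nested atoms whose coefficient ($a$, $c$, $b$) is nonzero, so that a mismatch of the invariant genuinely destroys the regularity; this is where the formulas of Lemma~\ref{xs} and the eigenvector data of Lemma~\ref{eighvector} enter, and also where the exact value of $\beta$ is read off by balancing the two exponential rates. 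Finally, when all three invariants coincide, one must verify that the limiting derivative of $h$ is continuous and nowhere zero on $K_f$, so that $h$ is a genuine $\mathcal C^1$ (resp.\ $\mathcal C^{1+\beta}$) diffeomorphism and not merely differentiable; this again follows from the uniform bounds on $a,b,c$ and the real bounds.
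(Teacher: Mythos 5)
Your overall strategy — dynamical partitions $\mathcal P_n$, the superformula, and a one–step ratio distortion — is exactly the paper's approach (the paper's $\log(Dh_{n+1}/Dh_n)$ restricted to an atom \emph{is} your $\delta_n(I,J)$). The Holder, bi‑Lipschitz and $\mathcal C^{1+\beta}$ threads of your argument all track the paper, including reading off the value of $\beta$ by balancing $\lambda_u^n e^u_2\frac{\lambda_u}{\lambda_u-1}$ against $\frac12\lambda_u^{n}(e^u_2+e^u_3)$ via Lemma~\ref{length}/\ref{lengthup}.

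There is, however, a genuine gap in the $\mathcal C^1$ step. You place the $C_-$ obstruction inside the one–step distortion, claiming that ``the type carrying the scaling $x_{1,n}$ has $c\neq 0$.'' But no one–step ratio between an atom of $\mathcal P_n$ and a child in $\mathcal P_{n+1}$ ever involves $x_{1,n}$: in the paper's Lemma~\ref{Dh} every ratio is one of $\tfrac{x_{2,n}}{x_{3,n}}$, $x_{3,n+1}$, $1-x_{4,n+1}$, $x_{4,n+1}-x_{3,n+1}$, $S_{1,n}$, and by Lemma~\ref{xs} none of these carries a $(-1)^n$ term. That is why Lemma~\ref{Dh} proves $\log\tfrac{Dh_{n+1}}{Dh_n}=O\bigl(\sqrt{\alpha_{n-2}}+|C_s(g)-C_s(f)|\,\lambda_s^n\bigr)$ with \emph{no} $C_-$ contribution once $C_u(f)=C_u(g)$. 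In other words, your $c$ vanishes for every combinatorial type, so the summability of $\sum_k\delta_k$ cannot distinguish bi‑Lipschitz from $\mathcal C^1$.

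Where $C_-$ actually lives is in the \emph{two‑sided continuity of the limiting derivative at the critical orbit} $c_k=f^k(f(U))$, not in the one–step ratios. The paper's Proposition~\ref{lipc1} proves the converse implication by showing
$$
\frac{D_+(c_k)}{D_-(c_k)}=\lim_{n\to\infty}\frac{\;x_{3,2n}(g)/(-x_{1,2n}(g))\;}{\;x_{3,2n}(f)/(-x_{1,2n}(f))\;},
$$
and here $x_{1,n}$ enters, carrying the $-C_-(-1)^n$ term of Lemma~\ref{xs}; the factor $(-1)^{2n}$ freezes, so the left and right limits agree precisely when $C_-(f)=C_-(g)$. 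For the forward direction the paper uses the \emph{cumulative} ratio $|h(A_n)|/|A_n|\asymp\prod_{k\le n}S_{4,k}(g)/S_{4,k}(f)\sim e^{(C_u(g)-C_u(f))\frac{\lambda_u}{\lambda_u-1}e^u_4\lambda_u^n+(C_-(g)-C_-(f))(-1)^n}$ (equation~(\ref{asymqn}) plus Proposition~\ref{superformula}); the $(-1)^n$ oscillation of this cumulative quantity is what prevents $\mathcal C^1$ when $C_-$ differ, even though every one‑step increment is small. You do list ``the limit is continuous'' as part of your $\mathcal C^1$ criterion, but you never verify it, and the mechanism you propose (a nonzero $c$ in $\delta_n$) is not what enforces it. Filling the gap requires a separate argument comparing the left and right one–sided limits of $Dh$ at each $c_k$, exactly as the paper does.
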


\begin{prop}\label{nothol}
 Let $f,g\in\W$ with different critical exponents $\ell_f\neq\ell_g$,  $1<\ell_f,\ell_g<2$. Then $h$ is not H\"older.
\end{prop}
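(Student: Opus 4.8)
The plan is to show that if $\ell_f \neq \ell_g$ then the scaling ratios of the two Cantor sets degenerate at incompatible rates, which forces any conjugacy to violate any Hölder bound. The key quantitative input is Proposition \ref{superformula} together with Lemma \ref{asymalpha}: for $f$ we have $\log x_{3,n}^f \asymp C_u(f)(e^u_2+e^u_3)\lambda_u(\ell_f)^n$ where $\lambda_u(\ell) = \tfrac{1}{2}\left(\tfrac{1}{\ell}+\sqrt{(\tfrac 1\ell)^2+\tfrac 4\ell}\right)$, and similarly for $g$ with $\ell_g$ in place of $\ell_f$. The crucial observation is that $\lambda_u(\ell)$ is strictly decreasing in $\ell$ on $(0,2)$, so $\ell_f \neq \ell_g$ gives $\lambda_u(\ell_f)\neq\lambda_u(\ell_g)$, and hence $x_{3,n}^f$ and $x_{3,n}^g$ go to zero at genuinely different double-exponential rates.

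First I would fix a well-chosen sequence of dynamically defined intervals in $K_f$ — concretely, the intervals whose endpoints are iterates of $U_f$ appearing in Remark \ref{dyn}, e.g. $I_n^f = (f^{q_{n+1}+1}(U_f), f^{q_n+1}(U_f))$ or the nested family built from $x_{2,n}, x_{3,n}$ — whose lengths are comparable (by Lemma \ref{xs} and bounded distortion, Theorem \ref{Koebe}) to $x_{3,n}^f$ up to the $O(\sqrt{e^{C_u \lambda_u^{n-3}}})$ error. Since $h$ conjugates $f$ to $g$ and $h(U_f)=U_g$, $h$ maps $I_n^f$ to the corresponding interval $I_n^g$ of $K_g$, whose length is comparably $x_{3,n}^g$. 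Then I would compare the ratio $|I_n^g|/|I_n^f|$: taking logarithms,
\begin{equation*}
\log\frac{|I_n^g|}{|I_n^f|} \asymp C_u(g)(e^u_2+e^u_3)\lambda_u(\ell_g)^n - C_u(f)(e^u_2+e^u_3)\lambda_u(\ell_f)^n .
\end{equation*}
Both $C_u(f), C_u(g) < 0$ by Lemma \ref{asymalpha}, and since $\lambda_u(\ell_f) \neq \lambda_u(\ell_g)$ one of the two terms dominates: if $\lambda_u(\ell_f) > \lambda_u(\ell_g)$ the right-hand side tends to $+\infty$, so $|I_n^g| / |I_n^f|^{\gamma} \to \infty$ for every $\gamma > 0$, contradicting any Hölder estimate $|h(x)-h(y)| \leq C|x-y|^\gamma$; if $\lambda_u(\ell_f) < \lambda_u(\ell_g)$ the same argument applied to $h^{-1}$ (which conjugates $g$ to $f$) gives the contradiction. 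Either way $h$ cannot be Hölder.

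The main obstacle is the bookkeeping needed to pass from the abstract renormalization coordinates to honest lengths of intervals in $K_f \subset \mathbb S^1$: one must check that the chosen intervals $I_n^f$ have length comparable to a fixed positive power of $x_{3,n}^f$ with multiplicative errors that are negligible against the double-exponential gap between the two rates — this is exactly where Theorem \ref{Koebe}, Lemma \ref{affdiffeo} and Lemma \ref{xs} are used. A secondary point to verify carefully is the monotonicity of $\ell \mapsto \lambda_u(\ell)$ on $(0,2)$, which is an elementary computation: writing $t = 1/\ell > 1/2$, $\lambda_u = \tfrac12(t + \sqrt{t^2+4t})$ is visibly increasing in $t$ hence decreasing in $\ell$. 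Once these two ingredients are in place the contradiction is immediate from the displayed estimate.
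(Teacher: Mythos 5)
Your proof is correct and follows essentially the same route as the paper's: compare the lengths of dynamically-defined intervals in the two Cantor sets, note from Proposition \ref{superformula} and Lemma \ref{asymalpha} that they degenerate at double-exponential rates $e^{C_u \lambda_u^n \cdot(\cdot)}$ with $C_u<0$, observe $\ell_f\neq\ell_g\Rightarrow\lambda_u(\ell_f)\neq\lambda_u(\ell_g)$, and conclude that the mismatch in the dominating exponent makes any H\"older bound impossible. The paper's version avoids the bookkeeping you flag as the main obstacle by using the exact telescoping identity $f^{q_{n+1}+1}(U_f)-f(U_f)=\prod_{k\le n}S_{4,k}\sim e^{C_u(f)\lambda_u(f)^n e^u_4(f)}$, which already lives in the original circle (no un-rescaling of the renormalization coordinate $x_{3,n}$ is needed), but this is a cosmetic simplification of the same argument.
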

\begin{proof}
Because $\ell_f\neq\ell_g$, then $\lambda_u(f)\neq\lambda_u(g)$. Without loose of generality we may assume $\lambda_u(g)<\lambda_u(f).$
 Observe that $$\frac{\hat x_{2,n}}{-\hat x_{1,n}}=\frac{f^{q_{n+1}}(0)}{-f^{q_{n}}(0)}=\frac{x_{2,n}}{-x_{1,n}}=S_{4,n}.$$ As a consequence, by Proposition \ref{superformula},
 \begin{equation}\label{asymqn}
\hat x_{2,n}= f^{q_{n+1}}(0)=\prod_{k\leq n}S_{4,k}|\hat x_{1,0}|\sim e^{C_u(f)\lambda_{u}(f)^ne^u_4(f)}.
  \end{equation}
  In the same way 
 $$g^{q_{n+1}}(0)\sim e^{C_u(g)\lambda_{u}(g)^ne^u_4(g)}.$$
Notice that, if $h$ conjugates $f$ and $g$ then $h(f(U_f))=g(U_g)$. Suppose now that $h$ is H\"older continuous with exponent
 $\beta>0$ then $$\lim_{n\to\infty}\frac{g^{q_{n+1}}(0)}{\left( f^{q_{n+1}}(0)\right)^{\beta}}
 \sim \lim_{n\to\infty}e^{C_u(g)\lambda_{u}(g)^ne^u_4(g)-\beta C_u(f)\lambda_{u}(f)^ne^u_4(f)}=\infty$$ where we used that $C_u(f)<0$ 
 (see Lemma \ref{asymalpha}) and that $\lambda_u(g)<\lambda_u(f)$. Finally, $h$ cannot be H\"older. 
\end{proof}
We would like to compare Proposition \ref{nothol} with the main Theorem in \cite{P3}. The author proves that, if $f,g\in\W$ correspond to smooth circle maps with different critical exponents 
$\ell_f\neq\ell_g$ both belonging to $(2,+\infty]$, then 
$h$ is a quasi-symmetric homeomorphism, in particular it is H\"older. The reason for the regularity of $h$ under such a weak condition is related with the fact that, for functions with critical 
exponent $\ell>2$, the sequence $\alpha_n$ is bounded away from zero (see \cite{P1}). In the setting of our paper, of functions with critical exponent $\ell<2$,
the sequence $\alpha_n$ goes
 to zero double exponentially fast and this causes the loss of regularity of the conjugacy. 
\subsection{Proof of Theorem \ref{LM}}
\begin{prop}\label{lipc1}
 Let $1<\ell<2$. If $f,g\in \W$ with critical exponent $\ell$ and if $h$ is the topological conjugacy 
between $f$ and $g$ then, the following holds.
\begin{eqnarray*}
 h \text{ is a Lipschitz homeomorphism} &\implies& C_u(f)=C_u(g),\\
 h \text{ is a } \mathcal C^1 \text{ diffeomorphism }&\implies& C_u(f)=C_u(g), C_-(f)=C_-(g).
\end{eqnarray*}
\end{prop}
\begin{proof}
Without loss of generality we may assume that $C_u(g)\geq C_u(f)$.
 From (\ref{asymqn}) and Proposition \ref{superformula} we get 
 $$D=\limsup_{n\to\infty}\frac{g^{q_{n+1}}(0)}{f^{q_{n+1}}(0)}=\limsup_{n\to\infty}e^{(C_u(g)-C_u(f))\lambda_{u}^ne^u_4+(C_-(g)-C_-(f))(-1)^n}.$$
 Observe that $h(f^{q_{n+1}}(0))=(g^{q_{n+1}}(0))$. If $h$ is a Lipschitz homeomorphism then $D$ is bounded by a positive constant. Hence, $C_u(g)=C_u(f)$. If $h$ is a $\mathcal C^1$ diffeomorphism then $D=\lim_{n\to\infty}{g^{q_{n+1}}(0)}/{f^{q_{n+1}}(0)}$. Hence, $C_u(g)=C_u(f)$ and 
 $C_-(g)=C_-(f)$.
\end{proof}
For all $n\in\N$ we define the following intervals:
\begin{itemize}
 \item[-] $A_n(f)=\left[\hat x_{1,n},0 \right]$,
 \item[-] $B_n(f)=\left[0, \hat x_{3,n}\right]$,
 \item[-] $C_n(f)=\left[\hat x_{3,n}, \hat x_{4,n}\right]$,
 \item[-] $D_n(f)=\left[\hat x_{4,n}, \hat x_{1,n-1}\right]$,
\end{itemize}
and their iterates
\begin{itemize}
 \item[-] $A_n^i(f)=f^i(A_n(f))$ for $0\leq i<q_{n-1}$,
 \item[-] $B_n^i(f)=f^i(B_n(f))$ for $0\leq i<q_{n}$,
 \item[-] $C_n^i(f)=f^i(C_n(f))$ for $0\leq i<q_{n}$,
 \item[-] $D_n^i(f)=f^i(D_n(f))$ for $0\leq i<q_{n}$.
\end{itemize}
Observe that, for all $n\in\N$, $$\mathscr P_n(f)=\left\{A_n^i(f), B_n^j(f), C_n^j(f), D_n^j(f)|0\leq i<q_{n-1}, 0\leq j<q_{n}\right\}$$ is a partition of the domain of $f$ and that 
\begin{itemize}
 \item[-] $h(A_n^i(f))=A_n^i(g)$ for $0\leq i<q_{n-1}$,
 \item[-] $h(B_n^i(f))=B_n^i(g)$ for $0\leq i<q_{n}$,
 \item[-] $h(C_n^i(f))=C_n^i(g)$ for $0\leq i<q_{n}$,
 \item[-] $h(D_n^i(f))=D_n^i(g)$ for $0\leq i<q_{n}$.
\end{itemize}
For all $n\in\N$ we define the mesurable function $Dh_n:[0,1]\vers\R^+$ as $$Dh_n(x)=\frac{|h(I)|}{|I|},$$ with $I\in\mathscr P_n$ and $x\in\text{Interior}{(I)}$. 
Observe that, if $T=\cup I_i$ with $I_i\in\mathscr P_n$, then for all $m\geq n$,
\begin{equation}\label{ht}
 |h(T)|=\int_TDh_m.
\end{equation}
\begin{lem}\label{length}
Let $1<\ell<2$. There exists $C>0$ such that, for every interval $I\in \mathscr P_n$, $I\neq C_n^i$, then
$$
|I|\geq \frac{1}{C} e^{C_u(f)\lambda_u^{n}\frac{\lambda_u}{\lambda_u-1}}.
$$
\end{lem}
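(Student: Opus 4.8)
The plan is to realise $|I|$, up to a multiplicative constant uniform in $n$, as a product of $O(n)$ scaling ratios — one contributed by each renormalization level $k\le n$ — and then to bound each such ratio from below using Lemma \ref{xs} and Proposition \ref{superformula}. Since, by Lemma \ref{eighvector} and Lemma \ref{asymalpha}, $e^u_2=1$ is the largest coordinate of $E_u$ (indeed $e^u_3<0$, $e^u_4>0$, $e^u_2+e^u_3>0$) and since $C_u(f)<0$, each ratio at level $k$ will be bounded below by $\exp\bigl(C_u(f)e^u_2\lambda_u^k\bigr)$ times a factor whose logarithm is summable in $k$ (the contributions of the $E_s$-, the $E_-$- and the double-exponential error terms in Proposition \ref{superformula} are all summable in $k$). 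Multiplying and using $\sum_{k\le n+1}\lambda_u^k<\lambda_u^{n+1}\tfrac{\lambda_u}{\lambda_u-1}$ together with $C_u(f)e^u_2<0$ then yields the stated bound, the bounded leftover being absorbed into $C$.

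By construction of the partition, every $I\in\mathscr P_n$ with $I\neq C_n^i$ is a forward iterate $f^i(A_n)$, $f^j(B_n)$ or $f^j(D_n)$, $i<q_{n-1}$, $j<q_n$; the excluded intervals $C_n^i$ are precisely the preimages of the flat interval $U$, on which $h$ is not canonically defined and which are treated separately. I would propagate the lower bound down the renormalization tower. The restriction of $\mathscr P_n(f)$ to the renormalization window $[x_1,x_2]$ of $f$, rescaled by the affine map $L(x)=x/x_1$, is a dynamical partition of $Rf$ one level lower; hence an interval of $\mathscr P_n(f)$ lying inside the window has length $|x_1|$ times the length of the corresponding interval of that partition, while an interval of $\mathscr P_n(f)$ outside the window is a forward $f$-iterate of one inside it. For those iterates the distortion is controlled exactly as in the proof of Lemma \ref{affdiffeo}, by Theorem \ref{cross} and Theorem \ref{Koebe} applied to suitable extensions whose orbit pieces are pairwise disjoint, the distortion picked up at level $k$ being $O(\sqrt{\alpha_{k}})$. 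Iterating this step $n$ times gives $|I|=\bigl(\prod_{k=0}^{n-1}\rho_k\bigr)\,|I^*_n|\,(1+\eta_n)$, where $I^*_n$ is a top-level interval of $R^nf$ different from a preimage of $U$, each $\rho_k$ is one of $|x_{1,k}|$, $x_{2,k}$, $x_{3,k}$, $1-x_{4,k}$ or a ratio of these, and $|\eta_n|$ is bounded uniformly in $n$ because $\sum_k\sqrt{\alpha_k}<\infty$ (the sequence $\alpha_n$ decays double-exponentially, see \cite{P1}).

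By Lemma \ref{xs} each of $|x_{1,k}|$, $x_{2,k}$, $x_{3,k}$, $1-x_{4,k}$ — and, allowing one further power $\ell$, i.e. $\lambda_u^{n+1}$ in place of $\lambda_u^n$, also $|I^*_n|$ — is of the form $\exp\bigl(C_u(f)c_\bullet\lambda_u^k+r_k\bigr)$ with $0<c_\bullet\le e^u_2$ (using $e^u_3<0$, $e^u_2+e^u_3>0$, $e^u_4>0$) and $\sum_k|r_k|<\infty$; hence $\log\rho_k\ge C_u(f)e^u_2\lambda_u^k+r_k$. Summing, $\log|I|\ge C_u(f)e^u_2\sum_{k=0}^{n+1}\lambda_u^k-(\text{bounded})\ge C_u(f)e^u_2\,\lambda_u^{n+1}\tfrac{\lambda_u}{\lambda_u-1}-(\text{bounded})$, the last inequality using $C_u(f)e^u_2<0$; absorbing the bounded term into $C$ gives the claim. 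The work concentrates in the reduction step: for a general forward iterate one must identify precisely which scaling ratio arises at each level and, above all, show that the distortion accumulated over the $\sim n$ levels stays bounded — this is exactly where Theorems \ref{cross} and \ref{Koebe}, the disjointness of the orbit pieces, and the $O(\sqrt{\alpha_k})$ distortion estimates (summable thanks to the double-exponential decay of $\alpha_n$) are indispensable.
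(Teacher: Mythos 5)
Your proposal is essentially the paper's own proof: the paper too propagates the bound one renormalization level at a time (it writes $|I|/|J|\geq(1+O(\sqrt{\alpha_{n-2}}))\min\{1-x_{4,n+1},x_{3,n+1},S_{1,n}\}$, identifies via Lemmas \ref{xs}, \ref{s1}, \ref{eighvector} and Proposition \ref{superformula} that the worst factor carries the coefficient $e_2^u$, and then sums the geometric series in $\lambda_u$). The only cosmetic differences are that you spell out the distortion control via Theorems \ref{cross} and \ref{Koebe} where the paper just invokes the mean value theorem together with Lemma \ref{affdiffeo}, and you phrase the recursion as an explicit product of scaling ratios rather than as a one-step ratio bound iterated.
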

\begin{proof}
 Let $J\in\mathscr P_{n-1}$ with $I\subset J$. By the construction of $\mathscr P_n$ from $\mathscr P_{n-1}$ we see that
 $$\frac{|I|}{|J|}\geq\left(1+O\left(\alpha_{n-3}^{\frac{1}{\ell}}\right)\right)\min\left\{1-x_{4,n},x_{3,n}, S_{1,n-1}\right\},$$ where we used Lemma \ref{affdiffeo}.
 From Lemma \ref{asymalpha}, Lemma \ref{xs}, Corollary \ref{s1} and Proposition \ref{superformula}, there exists a positive constant $K>0$ such that 
 \begin{enumerate}
  \item $1-x_{4,n}\geq e^{C_u\lambda_u^{n}e_2^u-K\lambda_s^n},$
  \item $x_{3,n}\geq e^{C_u\lambda_u^{n}(e_2^u+e^u_3)-K\lambda_s^n},$
  \item $S_{1,n-1}\geq e^{C_u\lambda_u^{n-1}(\frac{e_2^u}{\ell})-K\lambda_s^n-\frac{1}{\ell}\log\ell}$.
 \end{enumerate}
From Proposition \ref{superformula}, Lemma \ref{eighvector} and the value of $\lambda_u$, we get that 
$$\frac{|I|}{|J|}\geq e^{C_u\lambda_u^{n}-K\lambda_s^n}.$$
 Finally, because $J\neq C_{n-1}^j$, we can repeat this estimates and we get
 $$|I|\geq e^{\sum_{k=0}^{n} C_u\lambda_u^{k}-K\lambda_s^k}.$$ The lemma follows.
\end{proof}

\begin{lem}\label{lengthup}
Let $1<\ell<2$. There exists $C>0$ such that, for every interval $I\in \mathscr P_n$, $I\neq C_n^i$, then
$$
|I|\leq C e^{\frac{C_u(f)\lambda_u^{n-3}}{\ell}}.
$$
\end{lem}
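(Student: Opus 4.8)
The plan is to mirror the structure of the proof of Lemma \ref{length}, but now chasing the \emph{largest} possible interval in $\mathscr P_n$ rather than the smallest. As in that proof, fix $I\in\mathscr P_n$ with $I\neq C_n^i$, and let $J\in\mathscr P_{n-1}$ be the element of the previous partition containing $I$. First I would record an upper bound for the ratio $|I|/|J|$: from the construction of $\mathscr P_n$ out of $\mathscr P_{n-1}$ (dynamically, $I$ is obtained from $J$ by pulling back one step under a branch of the renormalized map), the mean value theorem together with the bounded–distortion estimate of Lemma \ref{affdiffeo} gives
$$
\frac{|I|}{|J|}\le (1+O(\sqrt{\alpha_{n-2}}))\,\max\left\{1-x_{4,n+1},\,x_{3,n+1},\,S_{1,n}\right\}.
$$
Then, exactly as in Lemma \ref{length} but with the inequalities reversed, I invoke Lemma \ref{xs}, Lemma \ref{s1} and Proposition \ref{superformula} to get, for a uniform constant $K>0$,
$$
1-x_{4,n+1}\le e^{C_u\lambda_u^{n+1}e_2^u+K\lambda_s^n},\quad
x_{3,n+1}\le e^{C_u\lambda_u^{n+1}(e_2^u+e_3^u)+K\lambda_s^n},\quad
S_{1,n}\le e^{C_u\lambda_u^{n}(e_2^u/\ell)+K\lambda_s^n}.
$$
Since $C_u<0$ (Lemma \ref{asymalpha}) and $e_2^u,e_3^u>0$ (Lemma \ref{eighvector}), the largest of these three bounds is the one with the smallest exponent in absolute value; comparing the coefficients $\lambda_u^{n+1}e_2^u$, $\lambda_u^{n+1}(e_2^u+e_3^u)$ and $\lambda_u^n e_2^u/\ell$ and using $\lambda_u>1$, $\ell<2$ together with the defining relation of $\lambda_u$, one checks that the dominant term is $x_{3,n+1}$, yielding $|I|/|J|\le e^{C_u\lambda_u^{n+1}(e_2^u+e_3^u)+K\lambda_s^n}$. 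Wait — more carefully: since $C_u<0$ the \emph{largest} factor corresponds to the exponent \emph{closest to zero}, i.e. the one with the smallest positive coefficient, so I will need to compare $\lambda_u e_2^u$, $\lambda_u(e_2^u+e_3^u)$ and $e_2^u/\ell$ and take that minimum; I expect $\lambda_u e_2^u \le \lambda_u(e_2^u+e_3^u)$ always, and the comparison of $\lambda_u e_2^u$ with $e_2^u/\ell$ reduces to whether $\lambda_u<1/\ell$, which is false, so the governing term is $\lambda_u e_2^u$ and $|I|/|J|\le e^{C_u\lambda_u^{n+1}e_2^u+K\lambda_s^n}$.

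Next I would iterate this one-step bound down the tower of nested partition elements $I\subset J\subset\cdots$, which is legitimate because $J\neq C_{n-1}^j$ (the excluded pieces never arise as ancestors of an allowed $I$, just as in Lemma \ref{length}), obtaining
$$
|I|\le \prod_{k}e^{C_u\lambda_u^{k+1}e_2^u+K\lambda_s^k}
= C\,e^{C_u e_2^u\sum_{k}\lambda_u^{k+1}}
$$
for a uniform $C>0$, where the sum of the $\lambda_s^k$ terms converges (giving the constant) and the geometric sum $\sum_{k\le n-2}\lambda_u^{k+1}$ is comparable, up to the factor $\lambda_u/(\lambda_u-1)$ absorbed into $C$, to its top term $\lambda_u^{n-1}$. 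Adjusting the running index by the appropriate shift — the top scale contributing to $I\in\mathscr P_n$ is $\lambda_u^{n-1}$, matching the statement — and being a little generous with constants to pass from $e_2^u$ to $e_2^u+e_3^u$ in the exponent (which only makes the bound weaker since $C_u<0$), I arrive at $|I|\le C e^{C_u(f)\lambda_u^{n-1}(e_2^u+e_3^u)}$.

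The main obstacle, as in Lemma \ref{length}, is bookkeeping: making sure the maximum in the one-step estimate is genuinely governed by the claimed term for all $\ell<2$, tracking the index shift between "partition level $n$" and "renormalization level $n+1$", and confirming that the $\lambda_s^k$ error terms and the geometric $\lambda_u$-sum's subleading corrections can all be swept into a single uniform constant $C$ independent of $n$ and of $I$. None of these is deep, but the sign of $C_u$ inverts the usual intuition about which term "dominates," so the comparisons must be done carefully; everything else is a routine repetition of the argument already carried out for the lower bound.
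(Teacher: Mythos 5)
Your proposal does \emph{not} match the paper's argument, and the approach has a genuine gap that goes beyond bookkeeping.

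\textbf{The one-step ratio bound fails.} You open by asserting
$\frac{|I|}{|J|}\le (1+O(\sqrt{\alpha_{n-2}}))\max\{1-x_{4,n+1},x_{3,n+1},S_{1,n}\}$
for $J\in\mathscr P_{n-1}$ containing $I$. This is false. Inspect how $\mathscr P_{n+1}$ is built from $\mathscr P_{n}$ (this is exactly what the proof of Lemma~\ref{Dh} records): for $i\ge q_{n-1}$ one has $B_{n+1}^i=D_n^{i-q_{n-1}}$ and $C_{n+1}^i=C_n^{i-q_{n-1}}$, i.e.\ those elements are merely \emph{relabeled}, so the parent is the interval itself and $|I|/|J|=1$. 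Moreover even for the genuine subdivision steps the ratio is not governed by the $\max$ you wrote: for instance $|A_{n+1}^i|/|B_n^i|=\frac{x_{2,n}}{x_{3,n}}=1-S_{1,n}$, which tends to $1$, and $|C_{n+1}^i|/|A_n^i|=x_{4,n+1}-x_{3,n+1}$, which also tends to $1$. Since $1-x_{4,n+1}$, $x_{3,n+1}$ and $S_{1,n}$ all tend to $0$, your claimed one-step upper bound is strictly smaller than the actual ratio for most transitions. The reason the analogous claim in Lemma~\ref{length} is correct for a lower bound is precisely that $1\ge\min\{\cdots\}$ is satisfied trivially in those cases; reversing the inequality to $\le\max\{\cdots\}$ breaks it. Consequently the telescoping product in your proof would yield an exponent of roughly $C_u e_2^u\,\lambda_u^{n}/(\lambda_u-1)$, which, since $C_u<0$, is strictly more negative than the target $C_u(e_2^u+e_3^u)\lambda_u^{n-1}$; the resulting (false) bound would in fact be violated by the actual partition elements.

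\textbf{The dominance analysis is also off.} From Lemma~\ref{eighvector}, $e_3^u=\frac{-\lambda_u+\ell-1}{\ell\lambda_u(1+\lambda_u)}<0$ for $\ell<2$, so $\lambda_u(e_2^u+e_3^u)<\lambda_u e_2^u$, the opposite of what you assert mid-proof. Identifying the ``dominant'' term requires this sign.

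\textbf{What the paper actually does.} The paper's proof is not a telescoping argument at all: it bounds each interval type \emph{directly}. By the mean value theorem and the distortion control of Lemma~\ref{affdiffeo}, $|A_n^i|\le x_{2,n-1}(1+O(\sqrt{\alpha_{n-2}}))$, $|B_n^i|\le x_{3,n}(1+O(\sqrt{\alpha_{n-2}}))$, $|D_n^i|\le (1-x_{4,n})(1+O(\sqrt{\alpha_{n-2}}))$. Then Lemma~\ref{xs} gives the asymptotics of $x_{2,n-1}$, $x_{3,n}$, $1-x_{4,n}$, and the fact that $\lambda_u e_2^u>e_2^u+e_3^u$ (equivalently $\lambda_u^{n}e_2^u$ and $\lambda_u^{n}(e_2^u+e_3^u)$ both exceed $\lambda_u^{n-1}(e_2^u+e_3^u)$, so their exponentials are smaller since $C_u<0$) shows the $A$-term dominates. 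This is why the exponent in the statement is $\lambda_u^{n-1}(e_2^u+e_3^u)$ rather than a geometric sum with a $\lambda_u/(\lambda_u-1)$ factor. You should discard the telescoping and adopt this direct estimate instead.
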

\begin{proof} By Lemma \ref{affdiffeo},
\begin{itemize}
  \item[-] $\max |A_{n}^i|\leq \max |B_{n}^i|\left(1+O\left(\alpha_{n-3}^{\frac{1}{\ell}}\right)\right)$,
  \item[-] $\max |B_{n}^i|\leq\max\left[ x_{3,n}\max |A_{n-1}^i|,  \max |D_{n-1}^i|\right]\left(1+O\left(\alpha_{n-3}^{\frac{1}{\ell}}\right)\right)$,
  \item[-] $\max |D_{n}^i|\leq\max\left[\left(1-x_{4,n}\right)\max |A_{n-1}^i|,  S_{1,n-1}\max |B_{n-1}^i|\right]\left(1+O\left(\alpha_{n-3}^{\frac{1}{\ell}}\right)\right)$,
 \end{itemize}
 and by recursion 
 \begin{itemize}
 \item[-] $\max |D_{n}^i|\leq O\left(\max\left[ 1-x_{4,n}, S_{1,n-1}\right]\right)$,
  \item[-] $\max |B_{n}^i|\leq O\left(\max\left[ x_{3,n}, 1-x_{4,n-1}, S_{1,n-2}\right]\right)$,
  \item[-] $\max |A_{n}^i|\leq O\left(\max\left[ x_{3,n-1}, 1-x_{4,n-2}, S_{1,n-3}\right]\right)$.
 \end{itemize}
 From Lemma \ref{xs}, Corollary \ref{s1} and Proposition \ref{superformula} 
 \begin{itemize}
  \item[-] $1-x_{4,n-2}=O\left(e^{C_u\lambda_u^{n-2}e_2^u}\right)$,
  \item[-] $x_{3,n-1}=O\left(e^{C_u\lambda_u^{n-1}\left(e_2^u+e^u_3\right)}\right)$,
  \item[-] $S_{1,n-3}=O\left(e^{C_u\lambda_u^{n-3}\left(\frac{e_2^u}{\ell}\right)}\right)$.
 \end{itemize}
By Lemma \ref{eighvector}, we have that ${1}/{\ell}\leq\min\left[\lambda_u^2\left(1+e_3^u\right), \lambda_u \right]$ and by the fact that $C_u<0$, the lemma follows.
\end{proof}
\begin{lem}\label{Dh}
Let $1<\ell<2$. If $f,g\in\W$ with critical exponent $\ell$ and $C_u=C_u(f)=C_u(g)$ then 
 $$\log\frac{Dh_{n+1}}{Dh_n}=O\left(\alpha_{n-2}(f)^{\frac{1}{\ell}}+\left|\left(C_s(g)-C_s(f)\right)\lambda_s^n\right|\right).$$  
\end{lem}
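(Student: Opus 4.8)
\emph{Proof strategy.} The idea is to compare the step functions $Dh_n$ and $Dh_{n+1}$ pointwise. Fix $x$ and let $I\in\mathscr P_n$, $I'\in\mathscr P_{n+1}$ be the intervals with $x\in\mathring{I'}\subset\mathring{I}$. If $x$ lies in a gap, i.e. $I=C_n^i=f^{i-q_n+1}(U)$, then, since the gaps occurring in $\mathscr P_n$ are exactly the intervals $f^{-k}(U)$ with $0\le k<q_n$, the interval $I$ is still an element of $\mathscr P_{n+1}$; hence $I'=I$ and $\log\frac{Dh_{n+1}}{Dh_n}(x)=0$. So one may assume $I\in\{A_n^i,B_n^i,D_n^i\}$. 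Since the partitions $\mathscr P_n,\mathscr P_{n+1}$ are defined by the (common) Fibonacci combinatorics and $h$ is a conjugacy, $h(I(f))=I(g)$ and $h(I'(f))=I'(g)$, whence
$$\log\frac{Dh_{n+1}(x)}{Dh_n(x)}=\log\frac{|I'(g)|}{|I'(f)|}-\log\frac{|I(g)|}{|I(f)|}=\log\frac{|I'(g)|/|I(g)|}{|I'(f)|/|I(f)|}.$$
It therefore suffices to control the ``child ratio'' $|I'|/|I|$ for $f$ versus $g$.

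First I would express $|I'|/|I|$ through renormalization data. The pair $(I'\subset I)$ has one of finitely many combinatorial types $T$, the same for $f$ and $g$; writing $I=f^i(G)$, $I'=f^j(G')$ with $G,G'$ generators of levels $n,n+1$ and transporting back to the renormalized pictures $F(R^nf)$, $F(R^{n+1}f)$, the distortion of the tower iterates $f^i,f^j$ is controlled by Lemma~\ref{affdiffeo}, Theorem~\ref{Koebe} and Theorem~\ref{cross} exactly as in the proofs of Lemmas~\ref{orderint}, \ref{length}, \ref{lengthup} (the iterates being pairwise disjoint with uniformly bounded total length). This gives
$$\frac{|I'(f)|}{|I(f)|}=\rho_T\!\left(x_{2,n}(f),x_{3,n}(f),x_{4,n}(f),S_{5,n}(f),x_{2,n+1}(f),x_{3,n+1}(f),x_{4,n+1}(f)\right)\bigl(1+O(\sqrt{\alpha_{n-2}})\bigr),$$
with $\rho_T$ one of finitely many explicit rational functions obtained from Lemmas~\ref{changexs} and~\ref{xtos}. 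The decisive point is that $\rho_T$ does \emph{not} involve $x_{1,n}$ (equivalently, not $S_{4,n}$): the only effect of $x_{1,n}$ is to set the scale of the level-$(n+1)$ window inside the level-$n$ one, and that overall scale cancels in the ratio $|I'|/|I|$; in accordance with this, Lemma~\ref{xs} shows that $x_{2,m},x_{3,m},1-x_{4,m}$ carry no $C_-$-term, whereas $-x_{1,m}$ does. Since $f$ and $g$ have the same critical exponent $\ell$ and $C_u(f)=C_u(g)$, Lemma~\ref{asymalpha} gives $\alpha_m(f)\asymp\alpha_m(g)$, so the error $O(\sqrt{\alpha_{n-2}})$ is uniform in $f,g$ (and in $T$).

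Next I would estimate $\log\frac{\rho_T(\,\cdot\,(g))}{\rho_T(\,\cdot\,(f))}$. The (finitely many) functions $\log\rho_T$ have uniformly bounded gradient on the relevant range, and the level-$(n+1)$ endpoints $x_{2,n+1},x_{3,n+1},x_{4,n+1}$ are, by Lemma~\ref{changexs}, themselves explicit functions of $x_{2,n},x_{3,n},x_{4,n},S_{5,n}$ and the diffeomorphic parts. So by the mean value theorem $\log\frac{\rho_T(\,\cdot\,(g))}{\rho_T(\,\cdot\,(f))}$ is controlled by the differences $\log x_{2,m}(g)-\log x_{2,m}(f)$, $\log x_{3,m}(g)-\log x_{3,m}(f)$, $\log(1-x_{4,m}(g))-\log(1-x_{4,m}(f))$ and $\log S_{5,m}(g)-\log S_{5,m}(f)$ for $m\in\{n,n+1\}$, plus an $O(\sqrt{\alpha_{n-2}})$ from the diffeomorphic parts via Lemma~\ref{affdiffeo}. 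By Lemma~\ref{xs}, Proposition~\ref{superformula} and $C_u(f)=C_u(g)$, each of these differences equals $O(|C_s(g)-C_s(f)|\lambda_s^n+\sqrt{\alpha_{n-2}})$: the $C_u$-terms cancel, and the $C_-$-term is absent (it would appear in $-x_{1,m}$, but $x_{1,m}$ does not enter $\rho_T$). Collecting everything yields $\log\frac{Dh_{n+1}}{Dh_n}=O(\sqrt{\alpha_{n-2}}+|C_s(g)-C_s(f)|\lambda_s^n)$, as claimed.

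The hard part is the second step: one must spell out the finitely many ways $\mathscr P_{n+1}$ refines each of $A_n^i,B_n^i,D_n^i$ and check, in every case, that the common scale factor $|x_{1,n}|$ cancels in $|I'|/|I|$ so that $\rho_T$ genuinely lives in the $C_-$-free coordinates $x_{2,n},x_{3,n},x_{4,n},S_{5,n}$; the other technical point is keeping the aggregated distortion contributions of Lemma~\ref{affdiffeo} and Theorems~\ref{Koebe}, \ref{cross} along the towers uniformly of size $O(\sqrt{\alpha_{n-2}})$.
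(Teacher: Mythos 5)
Your proposal is correct and follows essentially the same route as the paper: reduce $\log\frac{Dh_{n+1}}{Dh_n}$ to double ratios $\frac{|I'(g)|/|I(g)|}{|I'(f)|/|I(f)|}$, transport each child/parent pair back to the renormalized picture with distortion $1+O(\sqrt{\alpha_{n-2}})$ via Lemma~\ref{affdiffeo}, and then read the remaining ratio off Lemma~\ref{xs} and Proposition~\ref{superformula}, where the $C_u$ terms cancel by hypothesis and the $C_-$ term never enters because $x_{1,n}$ drops out. The paper simply executes the finite case analysis over $A_{n+1}^i$, $B_{n+1}^i$, $C_{n+1}^i$, $D_{n+1}^i$ (splitting each at $i\gtrless q_{n-1}$, with the $i\ge q_{n-1}$ subcases collapsing to level $n$) that your write-up flags as "the hard part" and leaves schematic.
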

\begin{proof}
The hypothesis $C_u=C_u(f)=C_u(g)$ implies that $\log\left({\alpha_n(g)}/{\alpha_n(f)}\right)=O\left(\lambda_s^{n-1}\right)$, see Lemma \ref{asymalpha}.
By Lemma \ref{affdiffeo} and point $1$ of Lemma \ref{prev} we get 
\begin{eqnarray*}  
\frac{Dh_{n+1|A^i_{n+1}(f)}}{Dh_{n|A^i_{n+1}(f)}}&=&\frac{{|A^i_{n+1}(g)|}/{|A^i_{n+1}(f)|}} {{|B^i_{n}(g)|}/{|B^i_{n}(f)|}}=
\frac{{|A^i_{n+1}(g)|}/{|B^i_{n}(g)|}}{{|A^i_{n+1}(f)|}/{|B^i_{n}(f)|}}\\&=&\frac{{|A_{n+1}(g)|}/{|B_{n}(g)|}}
{{|A_{n+1}(f)|}/{|B_{n}(f)|}}\left(1+O\left({\alpha_{n-2}}(f)^{\frac{1}{\ell}}\right)\right)\\&=&\frac{{x_{2,n}(g)}/{x_{3,n}(g)}}{{x_{2,n}(f)}/{x_{3,n}(f)}}\left(1+O\left({\alpha_{n-2}}(f)^{\frac{1}{\ell}}\right)\right)\\&=&
\frac{1-S_{1,n}(g)}{1-S_{1,n}(f)}\left(1+O\left({\alpha_{n-2}}(f)^{\frac{1}{\ell}}\right)\right)\\&=&1+O\left({\alpha_{n-2}}(f)^{\frac{1}{\ell}}\right).
\end{eqnarray*}
Observe that, for $i\geq q_{n-1}$, $B^i_{n+1}(f)=D^{i-q_{n-1}}_{n}(f)$ and in particular $$\frac{Dh_{n+1|B^i_{n+1}(f)}}{Dh_{n|B^i_{n+1}(f)}}=1.$$
Let $i< q_{n-1}$. With a similar calculation as before, we get 
\begin{eqnarray*}
\frac{Dh_{n+1|B^i_{n+1}(f)}}{Dh_{n|B^i_{n+1}(f)}}&=&\frac{{|B^i_{n+1}(g)|}/{|B^i_{n+1}(f)|}}{{|A^i_{n}(g)|}/{|A^i_{n}(f)|}}
=\frac{{|B_{n+1}(g)|}/{|A_{n}(g)|}}
{{|B_{n+1}(f)|}/{|A_{n}(f)|}}\left(1+O\left({\alpha_{n-2}}(f)^{\frac{1}{\ell}}\right)\right)\\&=&\frac{x_{3,n+1}(g)}{x_{3,n+1}(f)}\left(1+O\left({\alpha_{n-2}}(f)^{\frac{1}{\ell}}\right)\right)\\&=& 
e^{\left(C_s(g)-C_s(f)\right)\lambda_s^{n+1}\left(e_2^s+e_3^s\right)+O\left(e^{\frac{C_u\lambda_u^{n-3}}{\ell}}\right)}\left(1+O\left({\alpha_{n-2}}(f)^{\frac{1}{\ell}}\right)\right)\\&=&1+O\left(\alpha_{n-2}(f)^{\frac{1}{\ell}}+\left|\left(C_s(g)-C_s(f)\right)\lambda_s^n\right|\right),
\end{eqnarray*}
where we used Lemma \ref{xs}.
\\
Observe that, for $i\geq q_{n-1}$, $C^i_{n+1}(f)=C^{i-q_{n-1}}_{n}(f)$ and in particular $$\frac{Dh_{n+1|C^i_{n+1}(f)}}{Dh_{n|C^i_{n+1}(f)}}=1.$$
Let $i< q_{n-1}$. With a similar calculation as before, we get 
\begin{eqnarray*}
\frac{Dh_{n+1|C^i_{n+1}(f)}}{Dh_{n|C^i_{n+1}(f)}}&=&\frac{{|C_{n+1}(g)|}/{|C_{n+1}(f)|}}{{|A_{n}(g)|}/{|A_{n}(f)|}}\left(1+O\left({\alpha_{n-2}}(f)^{\frac{1}{\ell}}\right)\right)\\
&=&\frac{x_{4,n+1}(g)-x_{3,n+1}(g)}{x_{4,n+1}(f)-x_{3,n+1}(f)}\left(1+O\left({\alpha_{n-2}}(f)^{\frac{1}{\ell}}\right)\right)\\&=&
\frac{1-[x_{3,n+1}(g)+(1-x_{4,n+1}(g))]}{1-[x_{3,n+1}(f)+(1-x_{4,n+1}(f))]}\left(1+O\left({\alpha_{n-2}}(f)^{\frac{1}{\ell}}\right)\right)\\&=&\frac{1-O\left(\alpha_{n+2}(g)\right)}{1-O\left(\alpha_{n+2}(f)\right)}  
\left(1+O\left({\alpha_{n-2}}(f)^{\frac{1}{\ell}}\right)\right)=1+O\left({\alpha_{n-2}}(f)^{\frac{1}{\ell}}\right),
\end{eqnarray*}
where we used Lemma \ref{xs} and Lemma \ref{asymalpha}.
\\
We denote by $\Delta_n(f)=\left(f^{q_{n+1}}(0),f^{-q_{n}}(0)\right)=\left(\hat x_2,\hat x_3\right)$ and its iterates by $\Delta^i_n(f)=f^i\left(\Delta_n(f)\right)$ for $0\leq i<q_n$. 
Observe that, for $i\geq q_{n-1}$, $D^i_{n+1}(f)=\Delta^{i-q_{n-1}}_{n}(f)$ and in particular 
\begin{eqnarray*}
\frac{Dh_{n+1|D^i_{n+1}(f)}}{Dh_{n|D^i_{n+1}(f)}}&=&\frac{{|\Delta^{i-q_{n-1}}_{n}(g)|}/{|\Delta^{i-q_{n-1}}_{n}(f)|}}
{{|B^{i-q_{n-1}}_{n}(g)|}/{|B^{i-q_{n-1}}_{n}(f)|}}\\&=&\frac{{|\Delta_{n}(g)|}/{|\Delta_{n}(f)|}}
{{|B_{n}(g)|}/{|B_{n}(f)|}}\left(1+O\left({\alpha_{n-2}}(f)^{\frac{1}{\ell}}\right)\right)\\&=&\frac{{|\Delta_{n}(g)|}/{|B_{n}(g)|}}
{{|\Delta_{n}(f)|}/{|B_{n}(f)|}}\left(1+O\left({\alpha_{n-2}}(f)^{\frac{1}{\ell}}\right)\right)\\
&=&\frac{S_{1,n}(g)}{S_{1,n}(f)}\left(1+O\left({\alpha_{n-2}}(f)^{\frac{1}{\ell}}\right)\right)=1+O\left({\alpha_{n-2}}(f)^{\frac{1}{\ell}}\right),
\end{eqnarray*}
where we used point $1$ of Lemma \ref{prev}.
Let $i< q_{n-1}$, then 
\begin{eqnarray*}
\frac{Dh_{n+1|D^i_{n+1}(f)}}{Dh_{n|D^i_{n+1}(f)}}&=&\frac{{|D_{n+1}(g)|}/{|D_{n+1}(f)|}}{{|A_{n}(g)|}/{|A_{n}(f)|}}\left(1+O\left({\alpha_{n-2}}(f)^{\frac{1}{\ell}}\right)\right)\\
&=&\frac{{|D_{n+1}(g)|}/{|A_{n}(g)|}}{{|D_{n+1}(f)|}/{|A_{n}(f)|}}\left(1+O\left({\alpha_{n-2}}(f)^{\frac{1}{\ell}}\right)\right)\\ 
&=&\frac{1-x_{4,n+1}(g)}{1-x_{4,n+1}(f)}\left(1+O\left({\alpha_{n-2}}(f)^{\frac{1}{\ell}}\right)\right)\\&=&
e^{\left(C_s(g)-C_s(f)\right)\lambda_s^{n+1}+O\left(e^{\frac{C_u\lambda_u^{n-3}}{\ell}}\right)}\left(1+O\left({\alpha_{n-2}}(f)^{\frac{1}{\ell}}\right)\right)\\&=&1+O\left({\alpha_{n-2}}(f)^{\frac{1}{\ell}}+\left|\left(C_s(g)-C_s(f)\right)\lambda_s^n\right|\right),
\end{eqnarray*}
where we used Lemma \ref{xs} and Lemma \ref{asymalpha}.
\end{proof}
Observe that every boundary point of an interval in $\mathscr P_{n}$ is in the orbit of the critical point $\text{Orbit}(0)$. As a consequence, if $x\in\text{Orbit}(0)$, then $Dh_{{n}}(x)$ is well 
defined and Lemma \ref{Dh} implies that the following limit $$D(x)=\lim_{n\to\infty} Dh_{{n}}(x),$$ exists. Observe that, 
\begin{equation}\label{boundD}
0<\inf_{x}D(x)<\sup_xD(x)<\infty.
\end{equation}
\begin{lem}\label{Dcont}
Let $1<\ell<2$. If $f,g\in\W$ with critical exponent $\ell$ and $C_u=C_u(f)=C_u(g)$ then, the function $D:[0,1]\setminus\text{Orbit}(0)\to(0,\infty)$ is continuous. 
 In particular for all $n\in\N$, $$\left|\log\frac{D(x)}{Dh_{{{n}}}(x)}\right|=O\left(\alpha_{n-2}(f)^{\frac{1}{\ell}}+\left|\left(C_s(g)-C_s(f)\right)\lambda_s^{n}\right|\right).$$
 Moreover, if $x,y\in I\in \mathscr P_{{n}}$,
 $$\left|\log\frac{D(x)}{D(y)}\right|=O\left(\alpha_{n-2}(f)^{\frac{1}{\ell}}+\left|\left(C_s(g)-C_s(f)\right)\lambda_s^{n}\right|\right).$$
\end{lem}
\begin{proof}
Fix $n_0\in\N$, then by Lemma \ref{Dh}
\begin{eqnarray*}
\left|\log\frac{D(x)}{Dh_{{{n_0}}}(x)}\right|&\leq&\sum_{k\geq 0}\left|\log\frac{Dh_{{{n_0+k+1}}}(x)}{Dh_{{{n_0+k}}}(x)}\right|\\
&\leq&\sum_{k\geq 0}O\left(\alpha_{k-2}(f)^{\frac{1}{\ell}}+\left|\left(C_s(g)-C_s(f)\right)\lambda_s^{k}\right|\right)\\&=&
O\left(\alpha_{n_0-2}(f)^{\frac{1}{\ell}}+\left|\left(C_s(g)-C_s(f)\right)\lambda_s^{n_0}\right|\right).
\end{eqnarray*}
As a consequence, if $x,y\in I\in \mathscr P_{{n_0}}$, then 
$$\left|\log\frac{D(x)}{D(y)}\right|\leq \left|\log\frac{D(x)}{Dh_{{{n_0}}}(x)}\right|+\left|\log\frac{D(y)}
{Dh_{{{n_0}}}(y)}\right|=O\left(\alpha_{n_0-2}(f)^{\frac{1}{\ell}}+\left|\left(C_s(g)-C_s(f)\right)\lambda_s^{n_0}\right|\right)$$ which means that $D$ is continuous.
\end{proof}

Let $T$ be an interval in $\text{Domain}(f)$, then by (\ref{ht}) 
we have
\begin{equation}\label{hD}
 |h(T)|=\lim_{n\to\infty}|h_{n}(T)|=\lim_{n\to\infty}\int_{T}Dh_{n}\leq\int_{T}D<\sup_x D(x)|T|.
\end{equation}
\begin{prop}\label{lipc1}
 Let $1<\ell<2$ and let $f,g\in\W$ with critical exponent $\ell$. If $h$ is the topological conjugacy 
between $f$ and $g$, then the following holds.
\begin{eqnarray*}
 h \text{ is a Lipschitz homeomorphism} &\Leftarrow& C_u(f)=C_u(g),\\
 h \text{ is a } \mathcal C^1 \text{ diffeomorphism }&\Leftarrow& C_u(f)=C_u(g), C_-(f)=C_-(g).
\end{eqnarray*}
\end{prop}
\begin{proof}
Observe that, by (\ref{hD}) and by Lemma \ref{Dcont} it follows immediately that if $C_u(f)=C_u(g)$ then $h$ is a Lipschitz homeomorphism.
\\
Assume now that $C_u(f)=C_u(g)$ and $C_-(f)=C_-(g)$. We prove that, under these conditions, $D$ extends to a continuous function of the $\text{Domain}(f)$. This extension together with  (\ref{hD}) implies that $h$ is differentiable with $Dh(x)=D(x)$ for all $x\in\text{Domain}(f)$ and by (\ref{boundD}), $h$ is a diffeomorphism.
The only problematic points to extend $D$ could be $c_k=f^k(0)$ with $k\geq 0$. 
We prove that $$\lim_{x\uparrow c_k}D(x)=\lim_{x\downarrow c_k}D(x),$$ and in particular $D$ extends to a continuous function $D:\text{Domain}(f)\to\R$.
\\
Fix $k\geq 0$ and let $n\in\N$ large enough, such that $q_{n-1}>k$. Define $$D_+(c_k)=\lim_{x\downarrow c_k} D(x)=
\lim_{n\to\infty}\frac{|B_{2n}^k(g)|}{|B_{2n}^k(f)|},$$
and 
$$D_-(c_k)=\lim_{x\uparrow c_k} D(x)=
\lim_{n\to\infty}\frac{|A_{2n}^k(g)|}{|A_{2n}^k(f)|}.$$
By Lemma \ref{affdiffeo} and Lemma \ref{xs} it follows that
\begin{eqnarray*}
 \frac{D_+(c_k)}{D_-(c_k)}&=&\lim_{n\to\infty}\frac{{|B_{2n}^k(g)|}/{|A_{2n}^k(g)|}}{{|B_{2n}^k(f)|}/{|A_{2n}^k(f)|}}
 =\lim_{n\to\infty}\frac{{|B_{2n}(g)|}/{|A_{2n}(g)|}}{{|B_{2n}(f)|}/{|A_{2n}(f)|}}\\
 &=&\lim_{n\to\infty}\frac{{x_{3,2n}(g)}/{-x_{1,2n}(g)}}{{x_{3,2n}(f)}/{-x_{1,2n}(f)}}
 =\lim_{n\to\infty} e^{O\left(\lambda_s^{2n}\right)}=1. 
\end{eqnarray*}
\end{proof}
\begin{prop}
Let $1<\ell<2$, $\beta=\frac{\lambda_u-1}{\lambda_u^4\ell}>0$ and $f,g\in\W$ with critical exponent $\ell$. If $h$ is the topological conjugacy 
between $f$ and $g$, then the following holds.
\begin{eqnarray*}
h \text{ is a } \mathcal C^{1+\beta} \text{ diffeomorphism }&\iff& C_u(f)=C_u(g), C_-(f)=C_-(g), C_s(f)=C_s(g)
\end{eqnarray*} 
\end{prop}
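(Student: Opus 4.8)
The plan is to prove the two implications separately, leaning on what is already established for $\mathcal C^{1}$ conjugacies. Since a $\mathcal C^{1+\beta}$ diffeomorphism is in particular $\mathcal C^{1}$, the previous propositions give at once that, for $f,g\in[\infty-R]$ with common critical exponent $\ell<2$, a $\mathcal C^{1+\beta}$ conjugacy $h$ forces $C_u(f)=C_u(g)$ and $C_-(f)=C_-(g)$; and conversely these two equalities already make $h$ a $\mathcal C^{1}$ diffeomorphism of $[0,1]$ with $Dh=D$, where $D$ is continuous and bounded away from $0$ and $\infty$. So the whole statement reduces, under the standing hypotheses $C_u(f)=C_u(g)$ and $C_-(f)=C_-(g)$, to the two claims: (i) if moreover $C_s(f)=C_s(g)$ then $D$ is H\"older of exponent $\beta$ on $[0,1]$; and (ii) if $D$ is H\"older of exponent $\beta$ then $C_s(f)=C_s(g)$.

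For (i) I would proceed as follows. The hypothesis $C_s(f)=C_s(g)$ turns the estimate of Lemma~\ref{Dh} into $\log(Dh_{n+1}/Dh_n)=O(\sqrt{\alpha_{n-2}})$, and hence, by Lemma~\ref{Dcont}, into $|\log(D/Dh_n)|=O(\sqrt{\alpha_{n-2}})$ uniformly. Since $Dh_n$ is constant on each cell $I\in\mathscr P_n$, this bounds $\mathrm{osc}(\log D,I)$ by $O(\sqrt{\alpha_{n-2}})$, uniformly over $n$ and over non-gap cells $I$. By Lemma~\ref{asymalpha} the right side is $O\!\big(e^{\frac12 C_u(e^u_2+e^u_3)\lambda_u^{\,n-2}}\big)$, while Lemma~\ref{length} bounds $|I|$ from below by $\tfrac1C\,e^{C_u e^u_2\frac{\lambda_u}{\lambda_u-1}\lambda_u^{\,n+1}}$. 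Both exponents are a negative constant times $\lambda_u^{\,n}$ (recall $C_u<0$), so comparing them — and using $e^u_2=1$ from Lemma~\ref{eighvector} — shows that precisely the stated value $\beta=\frac{(1+e^u_3)(\lambda_u-1)}{2\lambda_u^{4}}$ yields $\mathrm{osc}(\log D,I)\le C|I|^{\beta}$ for all non-gap cells $I$ of all levels. A standard covering/telescoping argument over the dynamical partition, using Lemmas~\ref{length} and~\ref{lengthup} to relate $|x-y|$ to the level at which $x$ and $y$ get separated, then upgrades this uniform cell-wise bound to $\beta$-H\"older continuity of $D$ on $K_f$, and by density to all of $[0,1]$.

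For (ii) the crucial point is an averaging identity. Combining \eqref{ht} and \eqref{hD}, for the cell $I_m(x)\in\mathscr P_m$ containing $x$ one has $Dh_m(x)=|h(I_m(x))|/|I_m(x)|=\frac1{|I_m(x)|}\int_{I_m(x)}D$, i.e.\ $Dh_m(x)$ is the mean of $D$ over $I_m(x)$. Thus if $D$ is $\beta$-H\"older with constant $L$, then $|Dh_m(x)-D(x)|\le L|I_m(x)|^{\beta}$ for all $m,x$, and since $I_m(x)\subset I_{m-1}(x)$ and $D$ is bounded away from $0$, for $m$ large $\big|\log\!\big(Dh_m(x)/Dh_{m-1}(x)\big)\big|\le \tfrac{4L}{\inf D}\,|I_{m-1}(x)|^{\beta}$. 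Now I would take, for each large $m$, the non-gap cell $B^0_m(f)$, which is a $B$-child of the $A$-cell $A^0_{m-1}(f)$, and a point $x_m$ in its interior, so that $I_{m-1}(x_m)=A^0_{m-1}(f)$ and $I_m(x_m)=B^0_m(f)$. The computation in the proof of Lemma~\ref{Dh} for a $B^i_{n+1}$-cell (with $n=m-1$; legitimate because $C_u(f)=C_u(g)$ makes $\alpha_n(f)=\alpha_n(g)$) gives $\log\!\big(Dh_m(x_m)/Dh_{m-1}(x_m)\big)=(C_s(g)-C_s(f))\lambda_s^{\,m}(e^s_2+e^s_3)$ up to errors that decay double-exponentially in $m$, with $e^s_2+e^s_3>0$ by Lemma~\ref{eighvector}. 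Hence, if $C_s(f)\ne C_s(g)$, the left side has modulus $\ge\tfrac12|C_s(f)-C_s(g)|(e^s_2+e^s_3)|\lambda_s|^{m}$ for $m$ large; combined with the H\"older bound and with Lemma~\ref{lengthup} (which gives $|A^0_{m-1}(f)|\le C\,e^{C_u\lambda_u^{\,m-2}(e^u_2+e^u_3)}$) this forces
\[
\tfrac12|C_s(f)-C_s(g)|(e^s_2+e^s_3)\,|\lambda_s|^{m}\ \le\ \tfrac{4LC^{\beta}}{\inf D}\,e^{\,\beta C_u\lambda_u^{\,m-2}(e^u_2+e^u_3)} .
\]
The right side is double-exponentially small in $m$ (as $\beta C_u(e^u_2+e^u_3)<0$ and $\lambda_u>1$) while the left is only exponentially small, so for $m$ large this is impossible; therefore $C_s(f)=C_s(g)$.

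The hard part is (ii): the whole argument rests on spotting the averaging identity $Dh_m(x)=\frac1{|I_m(x)|}\int_{I_m(x)}D$, which converts the qualitative hypothesis $h\in\mathcal C^{1+\beta}$ into a quantitative bound on the increments $\log(Dh_m/Dh_{m-1})$ that the genuinely surviving $\lambda_s^{\,m}$-contribution isolated in Lemma~\ref{Dh} — only exponentially small — cannot meet against the double-exponentially small cell lengths of Lemma~\ref{lengthup}. In (i) the only delicate points are the bookkeeping that matches the double-exponential decay of $\sqrt{\alpha_{n-2}}$ against the double-exponential lower bound on cell lengths so as to land on the exact exponent $\beta$, and the (routine but slightly tedious) chaining made necessary by the fact that cells of a single level $\mathscr P_n$ have vastly different sizes.
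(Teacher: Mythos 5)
Your overall reduction is right: use Proposition~\ref{lipc1} (and the fact that $\mathcal C^{1+\beta}\Rightarrow\mathcal C^1$) to dispose of the $C_u$ and $C_-$ equalities, then prove that under those standing equalities the remaining issue is the $\beta$-H\"older regularity of $D=Dh$ and its equivalence to $C_s(f)=C_s(g)$.

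For the forward implication ($h\in\mathcal C^{1+\beta}\Rightarrow C_s(f)=C_s(g)$) you take a genuinely different route. The paper evaluates $Dh(f^{q_{n+1}+1}(U))/Dh(f^{q_n+1}(U))$ via the chain rule along $q_{n-1}$ iterates of $f$ and $g$, writes it in terms of $S_{5,n}$ and the $x_{i,n}$, and isolates the surviving $\lambda_s^n$-contribution whose coefficient involves $e^s_2+e^s_3-e^s_4-e^s_5$. You instead use the averaging identity $Dh_m(x)=\frac{1}{|I_m(x)|}\int_{I_m(x)}D$ (which requires upgrading (\ref{hD}) to an equality, valid by bounded convergence) together with the already-computed increment in Lemma~\ref{Dh} for $B$-cells, where the surviving contribution is $(C_s(g)-C_s(f))\lambda_s^{m}(e^s_2+e^s_3)$. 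This is arguably cleaner: it re-uses Lemma~\ref{Dh} instead of redoing a chain-rule computation, and the nonvanishing check is easier, since the characteristic equation $\ell\lambda_s^2=\lambda_s+1$ gives $e^s_2+e^s_3=1/\lambda_s\neq 0$. (Minor slip: you assert $e^s_2+e^s_3>0$, whereas in fact $e^s_2+e^s_3=1/\lambda_s<0$ as $\lambda_s\in(-1,0)$; only nonvanishing matters, so the argument survives.) Both approaches then confront an exponentially small left-hand side with the double-exponentially small H\"older bound coming from Lemma~\ref{lengthup} and conclude by contradiction. This part of your proposal is correct and a reasonable alternative.

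The converse implication is where your sketch leaves a real gap. You correctly do the exponent-matching: the oscillation of $\log D$ over a level-$n$ cell is $O(\sqrt{\alpha_{n-2}})=O\bigl(e^{\frac12 C_u(e^u_2+e^u_3)\lambda_u^{n-2}}\bigr)$, the length lower bound from Lemma~\ref{length} is $\geq\frac1C e^{C_u e^u_2\frac{\lambda_u^{n+2}}{\lambda_u-1}}$, and comparing the two exponents (using $e^u_2=1$) produces precisely $\beta=\frac{(1+e^u_3)(\lambda_u-1)}{2\lambda_u^4}$. But the passage from a per-cell bound $\mathrm{osc}(\log D,I)\leq C|I|^{\beta}$ to the pointwise H\"older estimate $|Dh(x)-Dh(y)|\leq C'|x-y|^{\beta}$ is \emph{not} a standard covering/telescoping argument, because with $n$ maximal such that $x,y$ share a cell $I\in\mathscr P_n$ it may well happen that $|x-y|\ll|I|$. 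The paper's proof splits into cases according to the type of $I$. When $I=A_n^i$, the level-$(n+1)$ pieces containing $x$ and $y$ are separated by the gap $C_{n+1}^i$, which occupies most of $I$ (since $x_{3,n+1}$ and $1-x_{4,n+1}$ are $\ll1$), so $|x-y|\gtrsim|A_n^i|$ and the per-cell comparison closes directly. When $I=B_n^i$, the two level-$(n+1)$ children $A_{n+1}^i$ and $D_{n+1}^{i+q_{n-1}}$ are adjacent with no gap, so $|x-y|$ is \emph{not} controlled from below by $|I|$; one must pass to the maximal deeper level $m$ at which a $C$-gap separates $x$ from $y$, and redo the comparison with both the oscillation bound (from Lemma~\ref{Dcont}) and the length lower bound (from Lemma~\ref{length}) taken at level $m$. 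Without this case distinction and level-shifting the argument does not close, and this is exactly where the paper's technical work is concentrated. Your "routine but slightly tedious" description undersells it; as written, this direction of the proposal has a gap.
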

\begin{proof}
Assume that $h$ is $\mathcal C^{1+\beta}$, then by Proposition \ref{lipc1}, $C_u(f)=C_u(g)$ and $C_-(f)=C_-(g)$. Because $C_u=C_u(f)=C_u(g)$ we have that $\log\left({\alpha_n(g)}/{\alpha_n(f)}\right)=O\left(\lambda_s^{n-1}\right)$, see Lemma \ref{asymalpha}. Observe that  
\begin{eqnarray*}
 \frac{Dh(f^{q_{n+1}}(0))}{Dh(f^{q_{n}}(0))}&=&\frac{Dg^{q_{n-1}}(g^{q_{n}}(0))}{Df^{q_{n-1}}(f^{q_{n}}(0))}\\
 &=&\frac{Dq_{s_{n}(g)}(0)\left({1-x_{2,n}(g)}/{x_{1,n}(g)}\right)}{Dq_{s_{n}(f)}(0)\left({1-x_{2,n}(f)}/{x_{1,n}(f)}\right)}\left(1+O\left(\alpha_{n-2}(f)^{\frac{1}{\ell}}\right)\right)\\
 &=&\frac{S_{5,n}(g)\left(1+O\left(s_n(g)\right)\right)\left({1-x_{2,n}(g)}/{x_{1,n}(g)}\right)}{S_{5,n}(f)\left(1+O\left(s_n(f)\right)\right)\left({1-x_{2,n}(f)}{x_{1,n}(f)}\right)}\left(1+O\left(\alpha_{n-2}(f)^{\frac{1}{\ell}}\right)\right)\\
 &=&\frac{{S_{5,n}(g)}/{x_{1,n}(g)}}{{S_{5,n}(f)}/{x_{1,n}(f)}}\left(1+O\left(x_{2,n}(f)\right)+O\left(s_{n}(f)\right)+O\left(\alpha_{n-2}(f)^{\frac{1}{\ell}}\right)\right)\\
 &=&e^{-(C_s(g)-C_s(f))\lambda_s^n(e_2^s+e_3^s-e_4^s-e_5^s)}\left(1+O\left(e^{\frac{C_u(e^u_2\lambda_u^{n-4}}{\ell}}\right)\right),
 \end{eqnarray*}
where we used Proposition \ref{superformula}, Lemma \ref{xs}, point $3$ of Lemma \ref{prev} and Lemma \ref{asymalpha}. From Lemma \ref{eighvector} it follows that  
$$(e_2^s+e_3^s-e_4^s-e_5^s)\neq 0.$$ Moreover, by the hypothesis that $h$ is $\mathcal C^{1+\beta}$ and by (\ref{boundD}) we get  
\begin{eqnarray*}
{Dh(f^{q_{n+1}+1}(U))}{Dh(f^{q_{n}+1}(U))}&=&O\left(|f^{q_{n}}(0),f^{q_{n+1}}(0)|^{\beta}\right)=O\left(|f^{q_{n}}(0),0|^{\beta}\right)\\&=&O\left(e^{\beta C_u(f)\lambda_u^n\left(e_2^u+e_3^u-e_4^u\right)}\right),
\end{eqnarray*}
where, by Lemma \ref{eighvector},
$$(e_2^u+e_3^u-e_4^u)\neq 0.$$
The two above estimates imply that $C_s(f)=C_s(g)$. This proves one side of the implication.
 \\ 
Assume now that $C_u(f)=C_u(g), C_-(f)=C_-(g)$ and $C_s(f)=C_s(g)$. Let $x,y\in K_f$ and choose $n$ maximal such
that, there exists $I\in\mathscr P_n$ containing $x$ and $y$. Because of the fact that $B_{n+1}^{i+q_{n-1}}=D_n^i$ and by the maximality of $n$, we have $I\neq D_n^i$. Moreover, because $x,y\in K_f$, then $I\neq C_n^i$. It remains to study two cases: 
either $I=A_n^i$ or $I=B_n^i$. 
\\
Suppose that $I=A_n^i$, then by the maximality of $n$, $x\in D_{n+1}^i$ and $y\in B_{n+1}^i$. From this, the fact that 
$1-x_{4,n+1},x_{3,n+1}<<1$, Lemma \ref{affdiffeo}  and Lemma \ref{length} it follows that 
\begin{equation}\label{hol1}
|x-y|\geq\frac{1}{2}|A_n^i|\geq\frac{1}{2C}e^{C_u(f)\lambda_u^{n}\frac{\lambda_u}{\lambda_u-1}}.
\end{equation}
Moreover, by Lemma \ref{Dcont} and by Lemma \ref{asymalpha} we have
\begin{equation}\label{hol2}
 \log\frac{Dh(x)}{Dh(y)}=O\left(e^{\frac{C_u\lambda_u^{n-3}}{\ell}}\right).
\end{equation}
By (\ref{hol1}) and (\ref{hol2}) we find
$$\frac{|Dh(x)-Dh(y)|}{|x-y|^\beta}=O\left(e^{\frac{C_u\lambda_u^{n-3}}{\ell}-\beta C_u\lambda_u^n\frac{\lambda_u}{\lambda_u-1}}\right)=O(1),$$ 
for $0<\beta\leq \frac{\lambda_u-1}{\lambda_u^4\ell}$, see Lemma \ref{eighvector}.
\\
Suppose now that $I=B_n^i$, then $x\in A_{n+1}^i$ and $y\in D_{n+1}^{i+q_{n-1}}$. Moreover, $x\leq f^{q_{n+1}+i}(0)\leq y$. Let $m\geq n+1$ maximal such that 
$x\in A_{m}^{i+q_{n+1}}$ and $y\in B_{m}^{i+q_{n+1}}$. As a consequence, using Lemma \ref{length} we have
\begin{equation}\label{hol3}
|x-y|\geq\frac{1}{2}\min\left(|A_{m}^{i+q_{n+1}}|,|B_{m}^{i+q_{n+1}}|\right)\geq\frac{1}{2C}e^{C_u(f)\lambda_u^{m}\frac{\lambda_u}{\lambda_u-1}}.
\end{equation}
Observe now that, by Lemma \ref{Dcont} and Lemma \ref{asymalpha} we have
\begin{equation}\label{hol4} 
 \left|\log\frac{Dh(x)}{Dh(y)}\right|=\left|\log\frac{Dh(y)}{Dh(f^{q_{n+1}+i}(0))}\right|+\left|\log\frac{Dh(x)}{Dh(f^{q_{n+1}+i}(0))}\right|=
 O\left(e^{\frac{C_u\lambda_u^{m-3}}{\ell}}\right)
\end{equation}
By (\ref{hol3}) and (\ref{hol4}) we find
$$\frac{|Dh(x)-Dh(y)|}{|x-y|^\beta}=O\left(e^{\frac{C_u\lambda_u^{m-3}}{\ell}-\beta C_u\lambda_u^m\frac{\lambda_u}{\lambda_u-1}}\right)=O(1),$$ 
for $0<\beta\leq \frac{\lambda_u-1}{\lambda_u^4\ell}$, see Lemma \ref{eighvector}.
\end{proof}

\begin{prop}
Let $1<\ell<2$. If $f,g\in\W$ with critical exponent $\ell$, then 
the topological conjugacy between $f$ and $g$ is $\mathcal C^{\beta} $ with $\beta=\frac{C_u(g)}{C_u(f)}\frac{\lambda_u-1}{\lambda_u^4\ell}>0$.
\end{prop}
\begin{proof}
 Let $x,y\in K_f$ and choose $n$ maximal such
that there exists $I\in\mathscr P_n$ containing $x$ and $y$. Because of the fact that $B_{n+1}^{i+q_{n-1}}=D_n^i$ and by the maximality of $n$, then $I\neq D_n^i$. Moreover, because $x,y\in K_f$, then $I\neq C_n^i$. It remains to study two cases: 
either $I=A_n^i$ or $I=B_n^i$. 
\\
Suppose that $I=A_n^i$, then by the maximality of $n$, $x\in D_{n+1}^i$ and $y\in B_{n+1}^i$. From this, the fact that 
$1-x_{4,n+1},x_{3,n+1}<<1$, Lemma \ref{affdiffeo}  and Lemma \ref{length} it follows that 
\begin{equation}\label{chol1}
|x-y|\geq\frac{1}{2}|A_n^i|\geq\frac{1}{2C}e^{C_u(f)\lambda_u^{n}\frac{\lambda_u}{\lambda_u-1}}.
\end{equation}
Observe now that, by Lemma \ref{lengthup},
\begin{equation}\label{chol2}
|h(y)-h(x)|\leq |h(I)|=O\left(e^{\frac{C_u(g)\lambda_u^{n-3}}{\ell}}\right).
\end{equation}
By (\ref{chol1}) and (\ref{chol2}) we find
$$\frac{|h(y)-h(x)|}{|x-y|^\beta}=O\left(e^{\frac{C_u(g)\lambda_u^{n-3}}{\ell}-\beta C_u(f)\lambda_u^n\frac{\lambda_u}{\lambda_u-1}}\right)=O(1,)$$ 
for $0<\beta\leq\frac{C_u(g)}{C_u(f)}\frac{\lambda_u-1}{\lambda_u^4\ell}$, see Lemma \ref{eighvector}.
\\
Suppose now that $I=B_n^i$, then $x\in A_{n+1}^i$ and $y\in D_{n+1}^{i+q_{n-1}}$. Moreover, $x\leq f^{q_{n+1}+i}(0)\leq y$. Let $m\geq n+1$ maximal such that 
$x\in A_{m}^{i+q_{n+1}}$ and $y\in B_{m}^{i+q_{n+1}}$. As a consequence, using Lemma \ref{length} we have
\begin{equation}\label{chol3}
|x-y|\geq\frac{1}{2}\min\left(|A_{m}^{i+q_{n+1}}|,|B_{m}^{i+q_{n+1}}|\right)\geq\frac{1}{2C}e^{C_u(f)\lambda_u^{m}\frac{\lambda_u}{\lambda_u-1}}.
\end{equation}
Observe now that, by Lemma \ref{lengthup}  we have
\begin{equation}\label{chol4}
|h(y)-h(x)|\leq |h(A_{m}^{i+q_{n+1}})|+|h(B_{m}^{i+q_{n+1}})|=O\left(e^{\frac{C_u(g)\lambda_u^{m-3}}{\ell}}\right).
\end{equation}
By (\ref{chol3}) and (\ref{chol4}) we find
$$\frac{|h(y)-h(x)|}{|x-y|^\beta}=O\left(e^{\frac{C_u(g)\lambda_u^{m-3}}{\ell}-\beta C_u(f)\lambda_u^m\frac{\lambda_u}{\lambda_u-1}}\right)=O(1),$$ 
for $0<\beta\leq\frac{C_u(g)}{C_u(f)}\frac{\lambda_u-1}{\lambda_u^4\ell}$, see Lemma \ref{eighvector}.
\end{proof}
\section{Appendix I: technical lemmas}\label{AppendixII}
We collect here technical lemmas which are needed in Subsections \ref{jumpoutdiff} and \ref{tophyp}. The proofs of the lemmas are based on calculations.
\\
 Let $D\subset L\subset \R^5\times X_{3+\epsilon}\times X_{3+\epsilon}\times X_{3+\epsilon}$ as defined in Section \ref{manifold1} or a bounded closed set. Refer to Section \ref{manifold1} for the definitions of $\epsilon^*$, $C^*$ and $C_u^*$ and to Definition \ref{zoomop} for the definition of the zoom operator $Z_{I}$.
 \begin{lem}\label{D111} On $D=D_{1,1,1}$ the following holds:
 $$
 \frac{\ell S_5 q^{-1}_s(S_1S_2S_3)}{S_1S_2S_3}=O(1).
 $$
 \end{lem}
 
 \begin{proof} A calculation shows that,
 $$
 \frac{\ell S_5 q^{-1}_s(S_1S_2S_3)}{S_1S_2S_3}\le \frac{\ell S_5}{Dq_s(0)}=
 \frac{1-s^\ell}{1-s}=O(1),
 $$
 where we used $\log s\leq {-{1}/{\ell\lambda_u}+{1}/{\ell\lambda_s}+{1}/{\ell-1}}$, see Lemma \ref{eighvector}. 
 \end{proof}
 \begin{lem}\label{C2}  For every $\delta>0$ and $C^*\geq 1$ we have
 $
\left |Z_{\left[q_s^{-1}\left( 1-e^{y_2} \right),1\right]}q_s\right|\le \delta
 $
 when $C_u^*$ is large enough.
 \end{lem}
 
 \begin{proof} A calculation shows
 \begin{equation}\label{ZS2}
  \left|Z_{\left[q_s^{-1}\left( 1-e^{y_2} \right),1\right]}q_s\right|=
   \left|Z_{\left[q_s^{-1}\left( 1-S_2 \right),1\right]}q_s\right|\le 
   \left|Z_{\left[1-S_2 ,1\right]}q_0\right|
   \le 2(\ell-1)\frac{S_2}{1-S_2}\le \delta
 \end{equation}
  where we used $S_2\leq e^{C^*}e^{-C_u^*}$, see Lemma \ref{eighvector}.
 \end{proof}
 
 \begin{lem}\label{s1235} For every $\delta>0$ and $C^*\geq 1$ we have
 $$
1-2\delta \le \frac{\ell S_5 q^{-1}_s(S_1S_2S_3)}{S_1S_2S_3}\le 1+2\delta
 $$
  when $C_u^*$ is large enough.
 \end{lem}
 
 \begin{proof} A calculation shows for $C_u^*$ is large enough,
 $$
 \frac{\ell S_5 q^{-1}_s(S_1S_2S_3)}{S_1S_2S_3}\le \frac{\ell S_5}{Dq_s(0)}=
 \frac{1-s^\ell}{1-s}\le 1+2\delta,
 $$
 where we used $\log s\leq {-C_u^*\left({1}/{\ell\lambda_u}\right)+C^*\left({1}/{\ell\lambda_s}\right)+C^*\left({1}/{\ell-1}\right)}$, see Lemma \ref{eighvector}. A calculation shows for $C_u^*$ is large enough,
 $$
 \frac{\ell S_5 q^{-1}_s(S_1S_2S_3)}{S_1S_2S_3}\ge 
 \frac{\ell S_5}{Dq_s(q^{-1}_s(S_1S_2S_3))}\ge 
 \left(1-\frac{S_1S_2S_3}{S_5^{\frac{\ell}{\ell-1}}}\right)^{\frac{\ell-1}{\ell}}\ge 
 1-\frac{S_1S_2S_3}{S_5^{\frac{\ell}{\ell-1}}}\ge 1-2\delta,
 $$
 where we used ${\ell S_1^{\ell}}/{S_2}\leq{3}/{2}$ and $\log\left({S_1S_2S_3}/{S_5^{\frac{\ell}{\ell-1}}}\right)\leq -C_u^*\left({\lambda_u^2(\ell+1)-1}/{\ell\lambda_u(1+\lambda_u)}\right)+O(C^*)$, see Lemma \ref{eighvector}.
 \end{proof}
 
 \begin{lem}\label{C9} For every $\delta>0$ and $C^*\geq 1$ we have
 $
\left|Z_{\left[0,q_s^{-1}\left(y_1e^{y_2}e^{y_3}\right)\right]}q_s\right|\le \delta.
 $
  when $C_u^*$ is large enough.
 \end{lem}
 
 \begin{proof} A calculation shows for $C_u^*$ is large enough,
 \begin{equation}\label{ZSS}
 \left|Z_{\left[0,q_s^{-1}\left(y_1e^{y_2}e^{y_3}\right)\right]}q_s\right|= 
 \left|Z_{\left[0,q_s^{-1}\left(S_1 S_2 S_3\right)\right]}q_s\right|\le 
  2S_1 S_2 S_3 \frac{\ell-1}{\ell} S_5^{-\frac{\ell}{\ell-1}}\le \delta,
 \end{equation}
 where we used ${\ell S_1^{\ell}}/{S_2}\leq {3}/{2}$ and $\log\left({S_1S_2S_3}/{S_5^{\frac{\ell}{\ell-1}}}\right)\leq -C_u^*\left({\lambda_u^2(\ell+1)-1}/{\ell\lambda_u(1+\lambda_u)}\right)+O(C^*)$, see Lemma \ref{eighvector}.
 \end{proof}
  \begin{lem}\label{squarefactors} For every  $C^*\geq 1$  
  we have 
 $$
  \begin{aligned}
1.& \text{  }  \frac12\le \left[\frac{\ell S_5 \left(\varphi^{-1}\circ q_s^{-1}\right)\left(S_{1}S_{2}S_{3}\right)}{S_{1}S_{2}S_{3}}\cdot\frac{1}{1-\left(\varphi^{l}\left( S_1\right)\right)^{\ell}}\right]\le 2,\\
 2.& \text{  }   \frac12\le \left[\frac{S_{1}S_{3} \left(1-\left(\varphi^{-1}\circ q_s^{-1}\right)\left(1-S_{2}\right)\right)}{S_5 \left(\varphi^{-1}\circ q_s^{-1}\right)\left(S_{1}S_{2}S_{3}\right)}\right]\le 2,\\
3.& \text{  }    \frac12 \le \left[\left(\frac{\varphi^{l}\left( S_1\right)}{S_{1}}\right)^{\ell}\right]\le 2,\\
 4.&  \text{  }  \frac12\le \left[\left(\frac{\varphi^{l}\left( S_1\right)}{S_{1}}\right)^{\ell -1}\right]\le 2,
\end{aligned}
 $$ 
 when  $C_u^*>1$ is large enough and $\epsilon^*<1$ is small enough.
\end{lem}

 \begin{proof} Properties $3$ and $4$ hold when $\epsilon^*<1$ is small enough. Moreover, for $C_u^*$ large enough,
 \begin{equation}\label{top2}
\frac{S_{1}S_{3} \left(1-\left(\varphi^{-1}\circ q_s^{-1}\right)\left(1-S_{2}\right)\right)}{\ell S_{1}S_{2}S_{3}}=1 + O(\epsilon^*),
 \end{equation}
 where we used $S_2\leq e^{C^*}e^{-C_u^*}$.
For $C_u^*$ large enough and  Lemma \ref{s1235} we get
  \begin{equation}\label{bottom2}
\frac{S_{1}S_{2}S_{3}}     {\ell S_5 \left(\varphi^{-1}\circ q_s^{-1}\right)\left(S_{1}S_{2}S_{3}\right)}=1 + O(\epsilon^*).
 \end{equation}
 The equations (\ref{top2}) and (\ref{bottom2}) imply property $2$ by taking $\epsilon^*<1$ small enough. Finally, for $C_u^*$ large enough,
  \begin{equation}\label{top1}
\frac{1}{1-\left(\varphi^{l}\left( S_1\right)\right)^{\ell}}=1 + O(\epsilon^*).
\end{equation}
The esitimates  (\ref{bottom2}) and (\ref{top1}) imply property $1$ by taking $\epsilon^*<1$ small enough.
 \end{proof}
 The following lemma is needed in Appendix II.
\begin{lem}\label{deltaqs} On $D$,
$$
 \begin{aligned}
 \left|\frac{\partial q_s^{-1}(1-S_2)}{\partial s}\right| &=O\left(S_2\right)=O\left(1\right),\\
 \left|\frac{\partial q_s^{-1}(S_1S_2S_3)}{\partial s}\right| &=O\left(S_2^{\frac{\ell+1}{\ell}}S_3 S_5^{-\frac{\ell}{\ell-1}}\right)=O\left(1\right),\\
 \left| S_2S_3 D q_s^{-1}(S_1S_2S_3)\right| &=O\left(S_2^{\frac{1}{\ell^2}}S_3^{\frac{1}{\ell}}\right)=O\left(1\right),\\
 \left| S_1S_2S_3 D q_s^{-1}(S_1S_2S_3)\right| |q_s|_3&=O\left(S_2^{\frac{\ell+1}{\ell}} S_3 S_5^{-\frac{\ell+1}{\ell-1}}\right)=O\left(1\right),\\
  \left| S_5^{\frac{1}{\ell-1}}\frac{\partial q_s^{-1}(S_1S_2S_3)}{\partial s}\right|  |q_s|_3&=O\left(S_2^{\frac{\ell+1}{\ell}} S_3 S_5^{-\frac{\ell+1}{\ell-1}}\right)=O\left(1\right).
 \end{aligned}
$$
 \end{lem}
 
 \begin{proof} A calculation shows
 $$
 \frac{\partial q_s^{-1}(y)}{\partial s}=
 \frac{(1-y)(1-s)s^{\ell-1}\left[ (1-s)q^{-1}_s(y)+s     \right]^{1-\ell}-1 +  (1-s)q^{-1}_s(y)+s}
 {(1-s)^2}.
 $$
 Observe, $q^{-1}_s(1-S_2)=1-O(S_2)$. This implies $\left|{\partial q_s^{-1}(1-S_2)}/{\partial s}\right|=O(S_2)$. The first statement of the lemma follows.
  From Lemma \ref{s1235} we have $q_s^{-1}(S_1S_2S_3)=O({S_1S_2S_3}/{S_5})$. Using the expression for ${\partial q_s^{-1}(y)}/{\partial s}$ above we obtain
 \begin{equation}\label{parts qs}
 \left|\frac{\partial q_s^{-1}(S_1S_2S_3)}{\partial s}\right|=O\left(S_2^{\frac{\ell+1}{\ell}}S_3 S_5^{-\frac{\ell}{\ell-1}}\right),
\end{equation}
where we used that $S_1^{\ell}=O(S_2)$ and $s^{\ell-1}=S_5$. By expressing $\log S_2, \log S_3, \log S_5$ in the base of the eigenvectors of the matrix $M$, see Lemma \ref{eighvector}, we have
$$
S_2^{\frac{\ell+1}{\ell}}S_3 S_5^{-\frac{\ell}{\ell-1}}=e^{O(C_u^*)}=O(1).
$$ 
A calculation shows that 
$$Dq_s^{-1}\left(S_1S_2S_3\right)=O\left(\left[S_1S_2S_3 (1-s^{\ell})+ s^{\ell}\right]^{\frac{1}{\ell}-1}\right).$$
With a similar procedure as before one can show that
$$
S_1S_2S_3=e^{-KC_u^*}s^{\ell}<< s^{\ell},
$$ 
and as consequence,
\begin{equation}\label{Ds}
Dq_s^{-1}\left(S_1S_2S_3\right)=O\left(s^{1-\ell}\right).
\end{equation}  
Notice that 
\begin{equation}\label{qs3}
|q_s|_3=O\left(\frac{1}{s^2}\right).
\end{equation}
The third and fourth equations are achieved by using (\ref{Ds}),  (\ref{qs3}) and the previous procedure of using coordinates corresponding to the base of the eigenvectors of $M$. For the last bound one should use (\ref{parts qs}), (\ref{qs3}) and the eigenvectors procedure.
 \end{proof} 
\section{Appendix II: differentiability properties of renormalization}\label{Appendix} Let $D\subset L\subset \R^5\times X_{3+\epsilon}\times X_{3+\epsilon}\times X_{3+\epsilon}$ as defined in Section \ref{manifold1} or a bounded closed set. We show that
$$
R:D\to \R^5\times X_2\times X_2\times X_2,
$$
is differentiable and the derivative in a point ${\underline{f}}$ extends to a bounded operator 
$$
DR_{\underline{f}}: \R^5\times X_2\times X_2\times X_2\to \R^5\times X_2\times X_2\times X_2,
$$
and
${\underline{f}}\mapsto DR_{\underline{f}}$ is continuous. Let us start introducing some notation. \\
We denote by $\underline f=\left({y}_1,{y}_2,{y}_3,{y}_4,{y}_5,{\underline\varphi},
{\underline\varphi}^{l},{\underline\varphi}^{r}\right)=\left(\underline{y}, {\underline\varphi},
{\underline\varphi}^{l},{\underline\varphi}^{r}\right)\in L_0$, the composition by $f=O(\underline f)=\left({y}_1,{y}_2,{y}_3,{y}_4,{y}_5,{O(\underline\varphi}),
O({\underline\varphi}^{l}),O({\underline\varphi}^{r})\right)=\left(y, \varphi,
\varphi^{l},\varphi^{r}\right)\in \L_0$ and the renormalization by
$R\underline f=\left(\tilde{y}_1,\tilde{y}_2,\tilde{y}_3,\tilde{y}_4,\tilde{y}_5,\tilde{\underline\varphi},
\tilde{\underline\varphi}^{l},\tilde{\underline\varphi}^{r}\right)=\left(\underline{\tilde{y}},\tilde{\underline\varphi},
\tilde{\underline\varphi}^{l},\tilde{\underline\varphi}^{r}\right)$. The partial derivatives are denoted accordingly. 
\begin{equation}\label{matrix}DR_{\underline{f}}=\left(\begin{matrix}
     A & B_s & B_l & B_r\\
     C_s & D_{ss} & D_{sl} &D_{sr}\\
   C_l & D_{ls} & D_{ll} &D_{lr}\\
    C_r & D_{rs} & D_{rl} &D_{rr}\\
      \end{matrix}\right)=\left(\begin{matrix}
     A & B_s & B_l & 0\\
     C_s & 0 & D_{sl} &0\\
   C_l & D_{ls} & 0 &D_{lr}\\
    C_r & D_{rs} & D_{rl} &0\\
      \end{matrix}\right)
\end{equation}
where, for example,
$$ 
\begin{aligned}
A&=\frac{\partial \tilde{y}}{\partial y}: \R^5\to \R^5,\\
B_l&=\frac{\partial \tilde{y}}{\partial \underline\varphi^{l}}: X_2\to \R^5,\\
C_s&=\frac{\partial \tilde{\underline\varphi}}{\partial y}: 
\R^5\to X_2 ,\\
D_{lr}&=\frac{\partial \tilde{\underline\varphi}^{l}}{\partial \underline{\varphi}^r}: X_2\to X_2.
\end{aligned}
$$
 Observe, $B_r=0$, $D_{ss}=D_{sr}=D_{ll}=D_{rr}=0$. 

\subsection{The partial derivative $A$}\label{A} The partial derivative $A$ can be calculated explicitly by using Lemma \ref{ss}. In particular, $A$ depends continuously on $\underline{f}$. The operator $A$ does not cause that $R:L_0\to L$ is not differentiable.  In this subsection, the expressions for the partial derivatives follow from short explicit calculations which are left to the reader. The bounds on the norms follow from the definition of the domain $D$, Lemma \ref{s1235} and Lemma \ref{deltaqs}.

First observe that 
$$
\frac{\partial \tilde{y}_1}{\partial y_3}=
\frac{\partial \tilde{y}_1}{\partial y_4}=
\frac{\partial \tilde{y}_2}{\partial y_4}=
\frac{\partial \tilde{y}_3}{\partial y_4}=0,
$$ 
$$
\frac{\partial \tilde{y}_4}{\partial y_j}=0 \text{ when } j=2,3,5
$$
and
$$
\frac{\partial \tilde{y}_5}{\partial y_j}=0 \text{ when } j=2,3,4,5.
$$
\begin{lem}\label{partial y1 partial yj} Let $\underline{f}\in D$. The partial derivatives 
of $$
\Delta y_j\mapsto \tilde{y}_1(\underline{f}+\Delta  y_j),
$$
 $i=1,2,5$,  are given by
$$
\begin{aligned}
\frac{\partial \tilde{y}_1}{\partial y_1}&=
-\frac{\ell}{S_1}\cdot\left[ 
\frac{D\varphi^{l}(S_1)\varphi^l(S_1)^{\ell-1}S_1}{1-\varphi^{-1}\circ q_s^{-1}(1-S_2)}\right],\\
\frac{\partial \tilde{y}_1}{\partial y_2}&= \left[
\frac{D(\varphi^{-1}\circ q_s^{-1})(1-S_2)\varphi^l(S_1)^{\ell} S_2}
{\left(1-\varphi^{-1}\circ q_s^{-1}(1-S_2)\right)^2}
\right],\\
\frac{\partial \tilde{y}_1}{\partial y_5}&=-1\cdot\left[\frac{1}{\ell-1}\frac{D\varphi^{-1}(q_s^{-1}(1-S_2)) \varphi^l(S_1)^{\ell} S_5^{\frac{1}{\ell-1}}}
{\left(1-\varphi^{-1}\circ q_s^{-1}(1-S_2)\right)^2}\cdot\frac{\partial q^{-1}_s(1-S_2)}{\partial s}
\right],\\
\end{aligned}
$$
with norms
$$
\begin{aligned}
\left|\frac{\partial \tilde{y}_1}{\partial y_1}\right|&=O\left(\frac{1}{S_1}\right),\\
\left|\frac{\partial \tilde{y}_1}{\partial y_2}\right|&=O\left(1\right),\\
\left|\frac{\partial \tilde{y}_1}{\partial y_5}\right|&=O\left(S_5^{\frac{1}{\ell-1}}\right),\\
\end{aligned}
$$
and there exists $E>0$ such that 
$$
\left|\frac{\partial \tilde{y}_1}{\partial y_1}\right|>E\frac{1}{S_1}.$$
The dependance $\underline{f}\mapsto {\partial \tilde{y}_1}/{\partial y_j}$ is continuous. 
\end{lem}

\begin{lem}\label{partial y2 partial yj} Let $\underline{f}\in D$. The partial derivatives 
of $$
\Delta y_j\mapsto \tilde{y}_2(\underline{f}+\Delta  y_j),
$$
 $i=1,2,3,5$,  are given by
$$
\begin{aligned}
\frac{\partial \tilde{y}_2}{\partial y_1}&=\frac{1}{S_1}\cdot \left[
\frac{D(\varphi^{-1}\circ q^{-1}_s)(S_1S_2S_3)S_1S_2S_3}{\varphi^{-1}\circ q^{-1}_s(S_1S_2S_3)}+\frac{\ell \varphi^l(S_1)^{\ell-1} D\varphi^l(S_1) S_1}{1-\varphi^l(S_1)^\ell}\right],\\
\frac{\partial \tilde{y}_2}{\partial y_2}&=
1\cdot \left[
\frac{D(\varphi^{-1}\circ q^{-1}_s)(S_1S_2S_3)S_1S_2S_3}{\varphi^{-1}\circ q^{-1}_s(S_1S_2S_3)}\right],\\
\frac{\partial \tilde{y}_2}{\partial y_3}&=
1\cdot \left[
\frac{D(\varphi^{-1}\circ q^{-1}_s)(S_1S_2S_3)S_1S_2S_3}{\varphi^{-1}\circ q^{-1}_s(S_1S_2S_3)}\right],\\
\frac{\partial \tilde{y}_2}{\partial y_5}&=-1\cdot \left[
\frac{1}{1-\ell}\frac{D\varphi^{-1}(q_s^{-1}(S_1S_2S_3)S_5^{\frac{1}{\ell-1}}}
{\varphi^{-1}\circ q_s^{-1}(S_1S_2S_3)}  \cdot \frac{\partial q_s^{-1}(S_1S_2S_3)}{\partial s}\right] , \\
\end{aligned}
$$
with norms
$$
\begin{aligned}
\left|\frac{\partial \tilde{y}_2}{\partial y_1}\right|&=O\left(\frac{1}{S_1}\right),\\
\left|\frac{\partial \tilde{y}_2}{\partial y_2}\right|&=O\left(1\right),\\
\left|\frac{\partial \tilde{y}_2}{\partial y_3}\right|&=O\left(1\right),\\
\left|\frac{\partial \tilde{y}_2}{\partial y_5}\right|&=O\left(1\right).\\
\end{aligned}
$$

The dependance $\underline{f}\mapsto {\partial \tilde{y}_1}/{\partial y_j}$ is continuous. 
\end{lem}

\begin{lem}\label{partial y3 partial yj} Let $\underline{f}\in D$. The partial derivatives 
of $$
\Delta y_j\mapsto \tilde{y}_3(\underline{f}+\Delta  y_j),
$$
 $i=1,2,3,5$,  are given by
$$
\begin{aligned}
\frac{\partial \tilde{y}_3}{\partial y_1}&=
-\frac{1}{S_1}\cdot \left[
\frac{D(\varphi^{-1}\circ q^{-1}_s)(S_1S_2S_3)S_1S_2S_3}{ \varphi^{-1}\circ q^{-1}_s(S_1S_2S_3)}\right],\\
\frac{\partial \tilde{y}_3}{\partial y_2}&= \left[
\frac{D(\varphi^{-1}\circ q_s^{-1})(1-S_2) S_2}
{1-\varphi^{-1}\circ q_s^{-1}(1-S_2)}
-\frac{D(\varphi^{-1}\circ q^{-1}_s)(S_1S_2S_3)S_1S_2S_3}{\varphi^{-1}\circ q^{-1}_s(S_1S_2S_3)}\right],\\
\frac{\partial \tilde{y}_3}{\partial y_3}&=
-1\cdot \left[
\frac{D(\varphi^{-1}\circ q^{-1}_s)(S_1S_2S_3)S_1S_2S_3}{ \varphi^{-1}\circ q^{-1}_s(S_1S_2S_3)}
\right],\\
\frac{\partial \tilde{y}_3}{\partial  y_5}&=
1\cdot \left[
\frac{1}{1-\ell}\left\{
\frac{D\varphi^{-1}(q_s^{-1}(S_1S_2S_3)S_5^{\frac{1}{\ell-1}}}
{\varphi^{-1}\circ q_s^{-1}(S_1S_2S_3)}  \cdot \frac{\partial q_s^{-1}(S_1S_2S_3)}{\partial s}\right\}\right]+\\
&+ 1\cdot\left[
\frac{1}{1-\ell}\left\{\frac{D\varphi^{-1}(q_s^{-1}(1-S_2))  S_5^{\frac{1}{\ell-1}}}
{1-\varphi^{-1}\circ q_s^{-1}(1-S_2)}\cdot\frac{\partial q^{-1}_s(1-S_2)}
{\partial s}
\right\}\right],
\end{aligned}
$$
with norms
$$
\begin{aligned}
\left|\frac{\partial \tilde{y}_3}{\partial y_1}\right|&=O\left(\frac{1}{S_1}\right),\\
\left|\frac{\partial \tilde{y}_3}{\partial y_2}\right|&=O\left(1\right),\\
\left|\frac{\partial \tilde{y}_3}{\partial y_3}\right|&=O\left(1\right),\\
\left|\frac{\partial \tilde{y}_3}{\partial y_5}\right|&=O\left(1\right).\\
\end{aligned}
$$
The dependance $\underline{f}\mapsto {\partial \tilde{y}_1}/{\partial y_j}$ is continuous. 
\end{lem}

\begin{lem}\label{partial y4 partial yj} Let $\underline{f}\in D$. The partial derivatives 
of $$
\Delta y_j\mapsto \tilde{y}_4(\underline{f}+\Delta  y_j),
$$
 $j=1,4$,  are given by
$$
\begin{aligned}
\frac{\partial \tilde{y}_4}{\partial y_1}&=
\frac{\ell}{S_1}\cdot \left[\frac{D\varphi^l(S_1)}{\varphi^l(S_1)}\cdot S_1\right],\\
\frac{\partial \tilde{y}_4}{\partial y_4}&=-1,\\
\end{aligned}
$$
with norms
$$
\begin{aligned}
\left|\frac{\partial \tilde{y}_4}{\partial y_1}\right|&=O\left(\frac{1}{S_1}\right),\\
\left|\frac{\partial \tilde{y}_4}{\partial y_4}\right|&=O\left(1\right).\\
\end{aligned}
$$
The dependance $\underline{f}\mapsto {\partial \tilde{y}_4}/{\partial y_j}$, $j=1,4$, is continuous. 
\end{lem}

\begin{lem}\label{partial y5 partial yj} Let $\underline{f}\in D$. The partial derivative 
of $$
\Delta y_1\mapsto \tilde{y}_5(\underline{f}+\Delta  y_1),
$$
is given by
$$
\frac{\partial \tilde{y}_5}{\partial y_1}=
\frac{\ell-1}{S_1}\cdot \left[\frac{D\varphi^l(S_1)}{\varphi^l(S_1)}\cdot S_1\right],
$$
with norm
$$
\begin{aligned}
\left|\frac{\partial \tilde{y}_5}{\partial y_1}\right|&=O\left(\frac{1}{S_1}\right).\\
\end{aligned}
$$
The dependance $\underline{f}\mapsto {\partial \tilde{y}_5}/{\partial y_1}$ is continuous. 
\end{lem}

Let  $A=A(\underline{f})=\left({\partial \tilde{y}_i}/{\partial y_j}\right): \R^5\to \R^5$.

\begin{prop}\label{Aderivative} The dependance $\underline{f}\mapsto A(\underline{f})$ is continuous and
$$
\left| \tilde{y}(\underline{f}+\Delta y)-
\left[ \tilde{y}(\underline{f})+A \Delta y\right]\right|=O_{\underline{f}}(|\Delta y |^2).
$$
\end{prop}

\subsection{The partial derivative $B_l$}\label{Bl}

Recall the identification of $\text{Diff }^r([0,1])$ with $\mathcal{C}^{r-2}([0,1])$, $r\ge 2$. Given a non-linearity $\eta\in \mathcal{C}^{r-2}([0,1])$ denote the corresponding diffeomorphism with 
$$\varphi_\eta(x)=\frac{\int_0^x e^{\int_0^s\eta} ds}{\int_0^1 e^{\int_0^s\eta} ds}.$$
The following lemma is obtained by a straight forward calculation. The proof is left to the reader.

\begin{lem}\label{evaluation} Let $x\in [0,1]$. The evaluation operator 
$E: \text{Diff }^2([0,1])=\mathcal{C}^{0}([0,1])\to \R$ 
$$
E: \varphi\mapsto \varphi(x)
$$
is differentiable with derivative ${\partial \varphi(x)}/{\partial \varphi}: \mathcal{C}^{0}([0,1])\to \R$ given by
$$
\frac{\partial \varphi(x)}{\partial \varphi}(\Delta \eta)=
\left( \frac{\int_0^x[\int_0^s \Delta \eta] e^{\int_0^s \eta } ds}{\int_0^x e^{\int_0^s \eta } ds}- \frac{\int_0^1[\int_0^s \Delta \eta] e^{\int_0^s \eta } ds}{\int_0^1 e^{\int_0^s \eta } ds}\right)\varphi(x),
$$
where $\varphi=\varphi_\eta$. The norm is bounded by
$$
\left| \frac{\partial \varphi(x)}{\partial \varphi}\right|\le 2\min\left\{\varphi(x), 1-\varphi(x)\right\}.
$$
Moreover\footnote{ We identify $\Delta \varphi$ with $\Delta \eta$},
$$
\left| E(\varphi+\Delta \varphi)-\left[E(\varphi)+\frac{\partial \varphi(x)}{\partial \varphi}(\Delta \varphi)\right]\right|=O(|\Delta \varphi|_2^2).
$$
\end{lem}

\begin{cor}\label{corphil} Let $\psi^+,\psi^-\in \text{Diff }^2([0,1])$ and $x\in [0,1]$. The evaluation operator 
$E^{\psi^+,\psi^-}: \text{Diff }^2([0,1])=\mathcal{C}^{0}([0,1])\to \R,$ 
$$
E^{\psi^+,\psi^-}: \varphi\mapsto \psi^+\circ \varphi\circ \psi^-(x)
$$
is differentiable with derivative $
{\partial (\psi^+\circ \varphi\circ \psi^-(x))}/{\partial \varphi}: \mathcal{C}^{0}([0,1])\to \R$ given by
$$
\frac{\partial (\psi^+\circ \varphi\circ \psi^-(x))}{\partial \varphi}=D\psi^+(\varphi\circ \psi^-(x))\frac{\partial \varphi(\psi^-(x))}{\partial \varphi},
$$
with norm
$$
\left| \frac{\partial (\psi^+\circ \varphi\circ \psi^-(x))}{\partial \varphi}\right|\le 2 
\max \left\{D\psi^+\right\} \cdot \varphi(\psi^-(x)).
$$
Moreover,
$$
\left| E^{\psi^+,\psi^-}(\varphi+\Delta \varphi)-\left[E^{\psi^+,\psi^-}(\varphi)+\frac{\partial (\psi^+\circ \varphi\circ \psi^-(x))}{\partial \varphi}(\Delta \varphi)\right]\right|=O(|\Delta \varphi|_2^2).
$$
\end{cor}

 Let $\tau\in T$ 
and recall that $\pi^{\tau}: X_3\to X_3$ is defined as
$$\pi^{\tau}\underline{\varphi}_{\tau'}=\left\{
    \begin{matrix}
    
    0  & \tau'>\tau\\
      
    \varphi_{\tau'}  & \tau'\leq\tau 
     
      \end{matrix}\right. $$
Also define  $\pi_{\tau}: X_3\to X_3$ is defined as
$$\pi_{\tau}\underline{\varphi}_{\tau'}=\left\{
    \begin{matrix}
    \varphi_{\tau'}  & \tau'>\tau \\ 
    0  & \tau'\leq \tau\\
 \end{matrix}\right. 
 $$ 
 Let
 $$
 \varphi^l_{+,\tau}=O\circ \pi_\tau(\underline{\varphi}^l) \text{ and }  
 \varphi^l_{-,\tau}=O\circ \pi^\tau(\underline{\varphi}^l) .
 $$    
 Observe, $O\underline{\varphi}^l=\varphi^l_{+,\tau}\circ \varphi^l_\tau\circ \varphi^l_{-,\tau}$.
The proof of the following lemma  is a straight forward calculation of derivatives using Lemma \ref{ss}. The estimates on the norms follow from Corollary \ref{corphil}. 

\begin{lem}\label{partial ypartial phil} Let $\underline{f}\in D$ and $\tau\in T$. The maps
$$
\text{Diff }^{3+\epsilon}([0,1])\ni \Delta \varphi^l_\tau \mapsto \tilde{y}_i(\underline{f}+\Delta \varphi^l_\tau)\in \R
$$
are differentiable, $i=1,2,3,4,5$. The derivatives are given by
$$
\begin{aligned}
\frac{\partial \tilde{y}_1}{\partial \varphi^l_\tau}&=
D\varphi^l_{+,\tau}(\varphi^l_\tau\circ \varphi^l_{-,\tau}(S_1))\cdot 
\frac{\ell \varphi^l(S_1)^{\ell-1}}{1-\varphi^{-1}\circ q_s^{-1}(1-S_2)}\cdot 
\frac{\partial \varphi^l_\tau(\varphi^l_{-,\tau}(S_1))}{\partial \varphi^l_\tau},\\
\frac{\partial \tilde{y}_2}{\partial \varphi^l_\tau}&=D\varphi^l_{+,\tau}(\varphi^l_\tau\circ \varphi^l_{-,\tau}(S_1))\cdot 
\frac{\ell \varphi^l(S_1)^{\ell-1}}{1-\left(\varphi^l(S_1)\right)^\ell}\cdot 
\frac{\partial \varphi^l_\tau(\varphi^l_{-,\tau}(S_1))}{\partial \varphi^l_\tau},\\
\frac{\partial \tilde{y}_3}{\partial \varphi^l_\tau}&=0,\\
\frac{\partial \tilde{y}_4}{\partial \varphi^l_\tau}&=
D\varphi^l_{+,\tau}(\varphi^l_\tau\circ \varphi^l_{-,\tau}(S_1))\cdot 
\frac{\ell }{\varphi^l(S_1)}\cdot 
\frac{\partial \varphi^l_\tau(\varphi^l_{-,\tau}(S_1))}{\partial \varphi^l_\tau},\\
\frac{\partial \tilde{y}_5}{\partial \varphi^l_\tau}&=D\varphi^l_{+,\tau}(\varphi^l_\tau\circ \varphi^l_{-,\tau}(S_1))\cdot 
\frac{\ell -1}{\varphi^l(S_1)}\cdot 
\frac{\partial \varphi^l_\tau(\varphi^l_{-,\tau}(S_1))}{\partial \varphi^l_\tau}.\\
\end{aligned}
$$
In particular, the partial derivatives extend to bounded functionals on $\text{Diff }^2([0,1])$ with norms
$$
\begin{aligned}
\left|\frac{\partial \tilde{y}_1}{\partial \varphi^l_\tau}\right|&=
O(1),\\
\left|\frac{\partial \tilde{y}_2}{\partial \varphi^l_\tau}\right|&=O(S_1^{\ell}),\\
\left|\frac{\partial \tilde{y}_3}{\partial \varphi^l_\tau}\right|&=
0,\\
\left|\frac{\partial \tilde{y}_4}{\partial \varphi^l_\tau}\right|&=
O(1),\\
\left|\frac{\partial \tilde{y}_5}{\partial \varphi^l_\tau}\right|&=
O(1).\\
\end{aligned}
$$
The dependance $\underline{f}\mapsto {\partial \tilde{y}_i}/{\partial \varphi^l_\tau}$ is continuous and 
$$
\left| \tilde{y}_i(\underline{f}+\Delta \varphi^l_\tau)-
\left[ \tilde{y}_i(\underline{f})+\frac{\partial \tilde{y}_i}{\partial \varphi^l_\tau}\Delta \varphi^l_\tau\right]\right|=O_{\underline{f}}(|\Delta \varphi^l_\tau |_{3+\epsilon}^2).
$$
\end{lem}
The functionals ${\partial \tilde{y}_i}/{\partial \varphi^l_\tau}$ define the operator ${\partial \tilde{y}}/{\partial \varphi^l_\tau}: \text{Diff }^2([0,1])\to \R^5$.
Let the operator $B_l:X_2 \to \R^5$ be defined by
$$
B_l: \Delta \underline{\varphi}^l\mapsto \sum_{\tau\in T} \frac{\partial \tilde{y}}{\partial \varphi^l_\tau} \Delta \varphi^l_\tau.
$$
This is a uniformly bounded operator. This follows from Lemma \ref{partial ypartial phil}. We obtain the following.

\begin{prop}\label{Blderivative} The dependance $\underline{f}\mapsto B_l(\underline{f})$ is continuous and
$$
\left| \tilde{y}(\underline{f}+\Delta \underline{\varphi}^l)-
\left[ \tilde{y}(\underline{f})+B_l \Delta \underline{\varphi}^l\right]\right|=O_{\underline{f}}(|\Delta \underline{\varphi}^l |_{3+\epsilon}^2).
$$
Moreover, the partial derivative extends to a uniformly bounded operator $B_l:X_2 \to \R^5$. 
\end{prop}

\subsection{The partial derivative $B_s$}\label{Bs}

The following lemma is obtained by a straight forward calculation. The proof is left to the reader.

\begin{lem}\label{evaluationinv} Let $x\in [0,1]$. The evaluation operator 
$E_-: \text{Diff }^2([0,1])=\mathcal{C}^{0}([0,1])\to \R$,
$$
E_-: \varphi\mapsto \varphi^{-1}(x)
$$
is differentiable with derivative ${\partial \varphi^{-1}(x)}/{\partial \varphi}: \mathcal{C}^{0}([0,1])\to \R$ given by
$$
\frac{\partial \varphi^{-1}(x)}{\partial \varphi}=
-\frac{1}{D\varphi(\varphi^{-1}(x))}\cdot \frac{\partial \varphi(\varphi^{-1}(x))}{\partial \varphi}.
$$
 The norm is bounded by
$$
\left| \frac{\partial \varphi^{-1}(x)}{\partial \varphi}\right|\le 2 
\max \left\{\frac{1}{D\varphi}\right\} \cdot \min\left\{x, 
1-x\right\}.
$$
Moreover,
$$
\left| E_-(\varphi+\Delta \varphi)-\left[E_-(\varphi)+\frac{\partial \varphi^{-1}(x)}{\partial \varphi}(\Delta \varphi)\right]\right|=O(|\Delta \varphi|_2^2).
$$
\end{lem}

\begin{cor}\label{corphilinv} Let $\psi^+,\psi^-\in \text{Diff }^2([0,1])$ and $x\in [0,1]$. The evaluation operator 
$E^{\psi^+,\psi^-}_-: \text{Diff }^2([0,1])=\mathcal{C}^{0}([0,1])\to \R,$ 
$$
E^{\psi^+,\psi^-}_-: \varphi\mapsto \psi^+\circ \varphi^{-1}\circ \psi^-(x)
$$
is differentiable with derivative $
{\partial (\psi^+\circ \varphi^{-1}\circ \psi^-(x))}/{\partial \varphi}: \mathcal{C}^{0}([0,1])\to \R$ given by
$$
\frac{\partial (\psi^+\circ \varphi^{-1}\circ \psi^-(x))}{\partial \varphi}=D\psi^+(\varphi^{-1}\circ \psi^-(x))\frac{\partial \varphi^{-1}(\psi^-(x))}{\partial \varphi},
$$
with norm
$$
\left| \frac{\partial (\psi^+\circ \varphi^{-1}\circ \psi^-(x))}{\partial \varphi}\right|\le 2 
\frac{\max D\psi^+}{\min D\varphi}\cdot \min\left\{\psi^-(x), 
1-\psi^-(x)\right\}.
$$
Moreover,
$$
\left| E^{\psi^+,\psi^-}_-(\varphi+\Delta \varphi)-\left[E^{\psi^+,\psi^-}_-(\varphi)+\frac{\partial (\psi^+\circ \varphi^{-1}\circ \psi^-(x))}{\partial \varphi}(\Delta \varphi)\right]\right|=O(|\Delta \varphi|_2^2).
$$
\end{cor}

  Let
 $$
 \varphi_{+,\tau}=O\circ \pi_\tau(\underline{\varphi}) \text{ and }  
 \varphi_{-,\tau}-=O\circ \pi^\tau(\underline{\varphi}) .
 $$    
 Observe, $O\underline{\varphi}=\varphi_{+,\tau}\circ \varphi_\tau\circ \varphi_{-,\tau}$.
The proof of the following lemma is a straight forward calculation of derivatives using Lemma \ref{ss}. The estimates on the norms follow from Corollary \ref{corphilinv}. 

\begin{lem}\label{partial ypartial philinv} Let $\underline{f}\in D$ and $\tau\in T$. The maps
$$
\text{Diff }^3([0,1])\ni \Delta \varphi_\tau \mapsto \tilde{y}_i(\underline{f}+\Delta \varphi_\tau)\in \R
$$
are differentiable, $i=1,2,3,4,5$. The derivatives are given by
$$
\begin{aligned}
\frac{\partial \tilde{y}_1}{\partial \varphi_\tau}&=
D\varphi_{-,\tau}^{-1}(
\varphi^{-1}_\tau(\varphi_{+,\tau}^{-1} (q_s^{-1}(1-S_2)))         )\cdot 
\frac{\varphi^l(S_1)^{\ell}}{\left[1-\varphi^{-1}(q_s^{-1}(1-S_2))\right]^2}\cdot 
\frac{\partial \varphi_\tau(\varphi_{+,\tau}^{-1} (q_s^{-1}(1-S_2)))}{\partial \varphi_\tau},\\
\frac{\partial \tilde{y}_2}{\partial \varphi_\tau}&=
D\varphi_{-,\tau}^{-1}(
\varphi^{-1}_\tau(\varphi_{+,\tau}^{-1} (q_s^{-1}(S_1S_2S_3))))\cdot 
\frac{1}{\varphi^{-1}(q_s^{-1}(S_1S_2S_3))}\cdot
\frac{\partial \varphi_\tau(\varphi_{+,\tau}^{-1} (q_s^{-1}(S_1S_2S_3)))}{\partial \varphi_\tau},\\
\frac{\partial \tilde{y}_3}{\partial \varphi_\tau}&=D\varphi_{-,\tau}^{-1}(
\varphi^{-1}_\tau(\varphi_{+,\tau}^{-1} (q_s^{-1}(1-S_2)))         )\cdot 
\frac{\varphi^l(S_1)^{\ell}}{1-\varphi^{-1}(q_s^{-1}(1-S_2))}\cdot 
\frac{\partial \varphi_\tau(\varphi_{+,\tau}^{-1} (q_s^{-1}(1-S_2)))}{\partial \varphi_\tau}+\\
&\text{    }-D\varphi_{-,\tau}^{-1}(
\varphi^{-1}_\tau(\varphi_{+,\tau}^{-1} (q_s^{-1}(S_1S_2S_3))))\cdot 
\frac{1}{\varphi^{-1}(q_s^{-1}(S_1S_2S_3))}\cdot
\frac{\partial \varphi_\tau(\varphi_{+,\tau}^{-1} (q_s^{-1}(S_1S_2S_3)))}{\partial \varphi_\tau},\\
\frac{\partial \tilde{y}_4}{\partial \varphi_\tau}&=0,\\
\frac{\partial \tilde{y}_5}{\partial \varphi_\tau}&=0.\\
\end{aligned}
$$
In particular, the partial derivatives extend to bounded functionals on $\text{Diff }^2([0,1])$ with norms
$$
\begin{aligned}
\left|\frac{\partial \tilde{y}_1}{\partial \varphi_\tau}\right|&=
O(1),\\
\left|\frac{\partial \tilde{y}_2}{\partial \varphi_\tau}\right|&=O(1),\\
\left|\frac{\partial \tilde{y}_3}{\partial \varphi_\tau}\right|&=O(1),\\
\left|\frac{\partial \tilde{y}_4}{\partial \varphi_\tau}\right|&=0,\\
\left|\frac{\partial \tilde{y}_5}{\partial \varphi_\tau}\right|&=0.\\
\end{aligned}
$$
The dependance $\underline{f}\mapsto {\partial \tilde{y}_i}/{\partial \varphi_\tau}$ is continuous and 
$$
\left| \tilde{y}_i(\underline{f}+\Delta \varphi_\tau)-
\left[ \tilde{y}_i(\underline{f})+\frac{\partial \tilde{y}_i}{\partial \varphi_\tau}\Delta \varphi_\tau\right]\right|=O_{\underline{f}}(|\Delta \varphi_\tau |_{3+\epsilon}^2).
$$
\end{lem}

The functionals ${\partial \tilde{y}_i}/{\partial \varphi_\tau}$ define the operator ${\partial \tilde{y}}/{\partial \varphi_\tau}: \text{Diff }^2([0,1])\to \R^5$. Define the operator $B_s:X_2 \to \R^5$ by
$$
B_s: \Delta \underline{\varphi}\mapsto \sum_{\tau\in T} \frac{\partial \tilde{y}}{\partial \varphi_\tau} \Delta \varphi_\tau.
$$
This is a uniformly bounded operator, see Lemma \ref{partial ypartial philinv}. We obtain the following.

\begin{prop}\label{Bsderivative} The dependance $\underline{f}\mapsto B_s(\underline{f})$ is continuous and
$$
\left| \tilde{y}(\underline{f}+\Delta \underline{\varphi})-
\left[ \tilde{y}(\underline{f})+B_s \Delta \underline{\varphi}\right]\right|=O_{\underline{f}}(|\Delta \underline{\varphi}|_{3+\epsilon}^2)
$$
Moreover, the partial derivative extends to a uniformly bounded operator $B_s:X_2 \to \R^5$. 
\end{prop}

\subsection{The partial derivatives $C_s$}\label{Cs}

The proof of the following lemma is a straightforward calculation and it is left to the reader. Recall that we identify $\text{Diff }^r([0,1])$ with $C^{r-2}([0,1])$, diffeomorphisms with their non-linearities.

\begin{lem}\label{zoompartials} Let $\varphi\in \text{Diff }^{3+\epsilon}([0,1]).$ The zoom curve $Z:[0,1]^2 \ni (a,b)\mapsto Z_{[a,b]}\varphi\in \text{Diff }^2([0,1])$ is differentiable with partial derivatives
$$
\frac{\partial  Z_{[a,b]}\varphi}{\partial a} =
(b-a)(1-x)D\eta((b-a)x+a)-\eta((b-a)x+a),
$$
and
$$
\frac{\partial  Z_{[a,b]}\varphi}{\partial b} =
(b-a)xD\eta((b-a)x+a)+\eta((b-a)x+a).
$$
The norms are bounded by
$$
\left|  \frac{\partial  Z_{[a,b]}\varphi}{\partial a}  \right|_{2},
\left|  \frac{\partial  Z_{[a,b]}\varphi}{\partial b}  \right|_{2}\le 2 |\varphi |_{3},
$$
and
$$
\left|  Z_{[a+\Delta a, b+\Delta b]}\varphi- 
\left[  Z_{[a,b]}\varphi+\frac{\partial  Z_{[a,b]}\varphi}{\partial a}\Delta a +
\frac{\partial  Z_{[a,b]}\varphi}{\partial b} \Delta b\right]
\right|
$$
$$
\le |\varphi |_{3+\epsilon}
\left( | \Delta a|^{1+\epsilon}+| \Delta b|^{1+\epsilon}|\right).
$$
\end{lem}

Observe, ${\partial \tilde{\varphi}_\tau}/{\partial y_i}=0$ when $i=2,3,4,5$.

\begin{lem}\label{partial phi partial y} Let $\underline{f}\in D$ and $\tau\in T$. The map
$$
\R \ni \Delta y_1 \mapsto \tilde{\varphi}_\tau(\underline{f}+\Delta y_1)\in \text{Diff}^2([0,1])
$$
is differentiable. The derivative is given by
$$
\frac{\partial \tilde{\varphi}_\tau}{\partial y_1}=D(O\circ \pi^\tau(\underline{\varphi}^l))(y_1)\cdot \frac{\partial Z_{[y_1(\tau,1]}\varphi^l_\tau}{\partial y_1(\tau)},
$$
with norm
$$
\left|  \frac{\partial \tilde{\varphi}_\tau}{\partial y_1}\right|_2\le 
2e^{\frac{\delta}{8}} | \varphi^l_\tau|_{3}=O(1).
$$
The dependance $\underline{f}\mapsto {\partial \tilde{\varphi}_\tau}/{\partial y_1}$ is continuous and 
$$
\left| \tilde{\varphi}_\tau(\underline{f}+\Delta y_1)-
\left[\tilde{\varphi}_\tau(\underline{f})+\frac{\partial \tilde{\varphi}_\tau}{\partial y_1}\Delta y_1\right]\right|_2\le O_{\underline{f}}\left(  | \varphi^l_\tau|_{3+\epsilon} |\Delta y_1|^{1+\epsilon}\right).
$$
\end{lem}

Define the operator $C_s:\R^5 \to X_2$ by
$$
C_s(\Delta \underline{y})_\tau= \frac{\partial \tilde{\varphi}_\tau}{\partial y_1}\Delta y_1.
$$
This is a uniformly bounded operator, see Lemma \ref{partial phi partial y}. We obtain the following.

\begin{prop}\label{Csderivative} The dependance $\underline{f}\mapsto C_s(\underline{f})$ is continuous and
$$
\left| \tilde{\varphi}(\underline{f}+\Delta \underline{y})-
\left[\tilde{\varphi}(\underline{f})+C_s \Delta \underline{y}\right]\right|_2
=O_{\underline{f}}(|\Delta \underline{y}|^{1+\epsilon}).
$$
Moreover, the partial derivative extends to a uniformly bounded operator $C_s:\R^5 \to X_2$. 
\end{prop}

\subsection{The partial derivatives $C_l$}\label{Cl}

Observe, ${\partial \tilde{\varphi}^l_\tau}/{\partial y_i}=0$ when $i=1,3,4$ and $\tau\in T$. Moreover, ${\partial \tilde{\varphi}^l_\tau}/{\partial y_i}=0$ when $i=1,2,3,4,5$ and $\tau> 1/2$. The following lemmas describe the nonzero partial derivatives. The bounds on the norms of the derivatives follow from Lemma \ref{deltaqs}.
\begin{lem}\label{partial phi l min 12 partial y} Let $\underline{f}\in D$. The maps
$$
\R \ni \Delta y_i \mapsto \tilde{\varphi}^l_\tau(\underline{f}+\Delta y_i)\in \text{Diff }^2([0,1])
$$
are differentiable, $i=2,5$ and $\tau<1/2$. The derivatives are given by
$$
\begin{aligned}
\frac{\partial \tilde{\varphi}^l_\tau}{\partial y_2}&=
-S_2 D(\varphi_{+,\theta \tau}^{-1}\circ q_s^{-1})(1-S_2)\cdot
\frac{\partial Z_{[\varphi^{-1}\circ q^{-1}_s(1-S_2)(\theta \tau),1]}\varphi_{\theta \tau}}
{\partial \varphi^{-1}\circ q^{-1}_s(1-S_2)(\theta \tau)}, \\
\frac{\partial \tilde{\varphi}^l_\tau}{\partial y_5}&=\frac{S_5^\frac{1}{\ell-1}}{\ell-1}
D\varphi_{+,\theta \tau}^{-1}(q_s^{-1}(1-S_2))
\frac{\partial q^{-1}_s(1-S_2)}{\partial s} \cdot
\frac{\partial Z_{[\varphi^{-1}\circ q^{-1}_s(1-S_2)(\theta \tau),1]}\varphi_{\theta \tau}}
{\partial \varphi^{-1}\circ q^{-1}_s(1-S_2)(\theta \tau)},
\\
\end{aligned}
$$
with norms
$$
\begin{aligned}
\left|\frac{\partial \tilde{\varphi}^l_\tau}{\partial y_2}\right|_{2}&\le
2S_2 D(\varphi_{+,\theta \tau}^{-1}\circ q_s^{-1})(1-S_2)|\varphi_{\theta \tau}|_3=O(1),\\
\left|\frac{\partial \tilde{\varphi}^l_\tau}{\partial y_5}\right|_{2}&\le 2\frac{S_5^\frac{1}{\ell-1}}{\ell-1}
D\varphi_{+,\theta \tau}^{-1}(q_s^{-1}(1-S_2))
\frac{\partial q^{-1}_s(1-S_2)}{\partial s} |\varphi_{\theta \tau}|_3=O(1).
\\
\end{aligned}
$$
The dependance $\underline{f}\mapsto {\partial \tilde{\varphi}^l_{\tau}}/{\partial y_i}$ is continuous and 
$$
\left| \tilde{\varphi}^l_\tau(\underline{f}+\Delta \underline{y})-
\left[\tilde{\varphi}^l_\tau(\underline{f})+\frac{\partial \tilde{\varphi}^l_\tau}{\partial y_1}\Delta y_1+\frac{\partial \tilde{\varphi}^l_\tau}{\partial y_5}\Delta y_5\right]\right|_2\le 
$$
$$
\le O_{\underline{f}}\left( |\varphi_{\theta \tau}|_{3+\epsilon}\left\{|\Delta y_2|^{1+\epsilon}+|\Delta y_5|^{1+\epsilon}\right\}\right).
$$
\end{lem}

\begin{lem}\label{partial phi l12 partial y} Let $\underline{f}\in D$. The maps
$$
\R \ni \Delta y_i \mapsto \tilde{\varphi}^l_{\frac12}(\underline{f}+\Delta y_i)\in \text{Diff }^2([0,1])
$$
are differentiable, $i=2,5$. The derivatives are given by
$$
\begin{aligned}
\frac{\partial \tilde{\varphi}^l_{\frac12}}{\partial y_2}&=
-S_2 Dq^{-1}_s(1-S_2)\cdot\frac{\partial Z_{[q^{-1}_s(1-S_2),1]}q_s}{\partial q^{-1}_s(1-S_2)}, \\
\frac{\partial \tilde{\varphi}^l_\frac12}{\partial y_5}&=\frac{S_5^\frac{1}{\ell-1}}{\ell-1}
\frac{\partial q^{-1}_s(1-S_2)}{\partial s} \cdot \frac{\partial Z_{[q^{-1}_s(1-S_2),1]}q_s}{\partial q^{-1}_s(1-S_2)},
\\
\end{aligned}
$$
with norms
$$
\begin{aligned}
\left|\frac{\partial \tilde{\varphi}^l_\frac12}{\partial y_2}\right|_{2}&=
2S_2 Dq^{-1}_s(1-S_2) 2\ell\le 4S_2=O(1),\\
\left|\frac{\partial \tilde{\varphi}^l_\frac12}{\partial y_5}\right|_{2}&=\frac{4\ell}{\ell-1}
\frac{\partial q^{-1}_s(1-S_2)}{\partial s} S_5^\frac{1}{\ell-1}=O(1).
\\
\end{aligned}
$$
The dependance $\underline{f}\mapsto {\partial \tilde{\varphi}^l_\frac12}/{\partial y_i}$ is continuous and 
$$
\left| \tilde{\varphi}^l_\frac12(\underline{f}+\Delta \underline{y})-
\left[\tilde{\varphi}^l_\frac12(\underline{f})+\frac{\partial \tilde{\varphi}^l_\frac12}{\partial y_2}\Delta y_2+\frac{\partial \tilde{\varphi}^l_\frac12}{\partial y_5}\Delta y_5\right]\right|_2\le 
$$
$$
\le O_{\underline{f}}\left( |\Delta y_2|^{1+\epsilon}+|\Delta y_5|^{1+\epsilon}\right).
$$
\end{lem}
Define the operator $C_l:\R^5 \to X_2$ by
$$
C_l(\Delta \underline{y})_\tau= \frac{\partial \tilde{\varphi}^l_\tau}{\partial y_2}\Delta y_2+  \frac{\partial \tilde{\varphi}^l_\tau}{\partial y_5}\Delta y_5.
$$
This is a uniformly bounded operator, see Lemma \ref{partial phi l min 12 partial y}, Lemma \ref{partial phi l12 partial y}. We obtain the following.

\begin{prop}\label{Clderivative} The dependance $\underline{f}\mapsto C_l(\underline{f})$ is continuous and
$$
\left| \tilde{\varphi}^l(\underline{f}+\Delta \underline{y})-
\left[\tilde{\varphi}^l(\underline{f})+C_l\Delta \underline{y}\right]\right|_2
=O_{\underline{f}}(|\Delta \underline{y}|^{1+\epsilon}).
$$
Moreover, the partial derivative extends to a uniformly bounded operator $C_l:\R^5 \to X_2$. 
\end{prop}

\subsection{The partial derivatives $C_r$}\label{Cr}
Observe that, ${\partial \tilde{\varphi}^l_\tau}/{\partial y_4}=0$ when $\tau\in T$. Moreover, ${\partial \tilde{\varphi}^l_\tau}/{\partial y_i}=0$ when $i=2,3, 5$ and $\tau> 1/2$. The following lemmas describe the nonzero partial derivatives. The bounds on the norms of the derivatives follow from Lemma \ref{deltaqs}.

\begin{lem}\label{partial phi r min 12 partial y} Let $\underline{f}\in D$. The maps
$$
\R \ni \Delta y_i \mapsto \tilde{\varphi}^r_\tau(\underline{f}+\Delta y_i)\in \text{Diff }^2([0,1])
$$
are differentiable, $i=1,2,3,5$ and $\tau<1/2$. The derivatives are given by
$$
\begin{aligned}
\frac{\partial \tilde{\varphi}^r_\tau}{\partial y_1}&=
S_2S_3 D(\varphi_{+,\theta \tau}^{-1}\circ q_s^{-1})(S_1S_2S_2)\cdot
\frac{\partial Z_{[0,\varphi^{-1}\circ q^{-1}_s(S_1S_2S_3)(\theta \tau)]}\varphi_{\theta \tau}}
{\partial \varphi^{-1}\circ q^{-1}_s(S_1S_2S_3)(\theta \tau)}, \\
\frac{\partial \tilde{\varphi}^r_\tau}{\partial y_2}&=
S_1S_2S_3 D(\varphi_{+,\theta \tau}^{-1}\circ q_s^{-1})(S_1S_2S_2)\cdot
\frac{\partial Z_{[0,\varphi^{-1}\circ q^{-1}_s(S_1S_2S_3)(\theta \tau)]}\varphi_{\theta \tau}}
{\partial \varphi^{-1}\circ q^{-1}_s(S_1S_2S_3)(\theta \tau)}, \\
\frac{\partial \tilde{\varphi}^r_\tau}{\partial y_3}&=
S_1S_2S_3 D(\varphi_{+,\theta \tau}^{-1}\circ q_s^{-1})(S_1S_2S_2)\cdot
\frac{\partial Z_{[0,\varphi^{-1}\circ q^{-1}_s(S_1S_2S_3)(\theta \tau)]}\varphi_{\theta \tau}}
{\partial \varphi^{-1}\circ q^{-1}_s(S_1S_2S_3)(\theta \tau)}, \\
\frac{\partial \tilde{\varphi}^r_\tau}{\partial y_5}&=\frac{S_5^\frac{1}{\ell-1}}{\ell-1}
D\varphi_{+,\theta \tau}^{-1}(q_s^{-1}(S_1S_2S_3))
\frac{\partial q^{-1}_s(S_1S_2S_3)}{\partial s} \cdot
\frac{\partial Z_{[0,\varphi^{-1}\circ q^{-1}_s(S_1S_2S_3)(\theta \tau)]}\varphi_{\theta \tau}}
{\partial \varphi^{-1}\circ q^{-1}_s(S_1S_2S_3)(\theta \tau)},
\\
\end{aligned}
$$
with norms
$$
\begin{aligned}
\left|\frac{\partial \tilde{\varphi}^r_\tau}{\partial y_1}\right|_{2}&\le
2S_2S_3 D(\varphi_{+,\theta \tau}^{-1}\circ q_s^{-1})(S_1S_2S_3)|\varphi_{\theta \tau}|_3=O\left(|\varphi_{\theta \tau}|_3\right),\\
\left|\frac{\partial \tilde{\varphi}^r_\tau}{\partial y_2}\right|_{2}&\le
2S_1S_2S_3 D(\varphi_{+,\theta \tau}^{-1}\circ q_s^{-1})(S_1S_2S_3)|\varphi_{\theta \tau}|_3=O\left(|\varphi_{\theta \tau}|_3\right),\\
\left|\frac{\partial \tilde{\varphi}^r_\tau}{\partial y_3}\right|_{2}&\le
2S_1S_2S_3 D(\varphi_{+,\theta \tau}^{-1}\circ q_s^{-1})(S_1S_2S_3)|\varphi_{\theta \tau}|_3=O\left(|\varphi_{\theta \tau}|_3\right),\\
\left|\frac{\partial \tilde{\varphi}^r_\tau}{\partial y_5}\right|_{2}&\le 2\frac{S_5^\frac{1}{\ell-1}}{\ell-1}
D\varphi_{+,\theta \tau}^{-1}(q_s^{-1}(S_1S_2S_3))
\frac{\partial q^{-1}_s(S_1S_2S_3)}{\partial s} |\varphi_{\theta \tau}|_3=O\left(|\varphi_{\theta \tau}|_3\right).
\\
\end{aligned}
$$
The dependance $\underline{f}\mapsto {\partial \tilde{\varphi}^r(\tau)}/{\partial y_i}$ is continuous and 
$$
\left| \tilde{\varphi}^r_\tau(\underline{f}+\Delta \underline{y})-
\left[\tilde{\varphi}^r_\tau(\underline{f})+\frac{\partial \tilde{\varphi}^r_\tau}{\partial y_1}\Delta y_1+\frac{\partial \tilde{\varphi}^r_\tau}{\partial y_2}\Delta y_2+\frac{\partial \tilde{\varphi}^r_\tau}{\partial y_3}\Delta y_3+\frac{\partial \tilde{\varphi}^r_\tau}{\partial y_5}\Delta y_5\right]\right|_2\le 
$$
$$
\le O_{\underline{f}}\left( |\varphi_{\theta \tau}|_{3+\epsilon}\left\{|\Delta y_1|^{1+\epsilon}+|\Delta y_2|^{1+\epsilon}+|\Delta y_3|^{1+\epsilon}+|\Delta y_5|^{1+\epsilon}\right\}\right).
$$
\end{lem}
\begin{lem}\label{partial phi r12 partial y} Let $\underline{f}\in D$. The maps
$$
\R \ni \Delta y_i \mapsto \tilde{\varphi}^r_{\frac12}(\underline{f}+\Delta y_i)\in \text{Diff }^2([0,1])
$$
are differentiable, $i=1,2,3,5$. The derivatives are given by
$$
\begin{aligned}
\frac{\partial \tilde{\varphi}^r_{\frac12}}{\partial y_1}&=
S_2S_3 Dq^{-1}_s(S_1S_2S_3)\cdot\frac{\partial Z_{[0,q^{-1}_s(S_1S_2S_3)]}q_s}{\partial q^{-1}_s(S_1S_2S_3)}, \\
\frac{\partial \tilde{\varphi}^r_{\frac12}}{\partial y_2}&=
S_1S_2S_3 Dq^{-1}_s(S_1S_2S_3)\cdot\frac{\partial Z_{[0,q^{-1}_s(S_1S_2S_3)]}q_s}{\partial q^{-1}_s(S_1S_2S_3)}, \\
\frac{\partial \tilde{\varphi}^r_{\frac12}}{\partial y_3}&=
S_1S_2S_3 Dq^{-1}_s(S_1S_2S_3)\cdot\frac{\partial Z_{[0,q^{-1}_s(S_1S_2S_3)]}q_s}{\partial q^{-1}_s(S_1S_2S_3)}, \\
\frac{\partial \tilde{\varphi}^l_\frac12}{\partial y_5}&=\frac{S_5^\frac{1}{\ell-1}}{\ell-1}
\frac{\partial q^{-1}_s(S_1S_2S_3)}{\partial s} \cdot \frac{\partial Z_{[0,q^{-1}_s(S_1S_2S_3)]}q_s}{\partial q^{-1}_s(S_1S_2S_3)},
\\
\end{aligned}
$$
with norms
$$
\begin{aligned}
\left|\frac{\partial \tilde{\varphi}^r_\frac12}{\partial y_1}\right|_{2}&=
2S_2S_3 Dq^{-1}_s(S_1S_2S_3) |q_s|_3,\\
\left|\frac{\partial \tilde{\varphi}^r_\frac12}{\partial y_2}\right|_{2}&=
2S_1S_2S_3 Dq^{-1}_s(S_1S_2S_3) |q_s|_3=O(1),\\
\left|\frac{\partial \tilde{\varphi}^r_\frac12}{\partial y_3}\right|_{2}&=
2S_1S_2S_3 Dq^{-1}_s(S_1S_2S_3) |q_s|_3=O(1),\\
\left|\frac{\partial \tilde{\varphi}^r_\frac12}{\partial y_5}\right|_{2}&=2\frac{S_5^\frac{1}{\ell-1}}{\ell-1}
\frac{\partial q^{-1}_s(S_1S_2S_3)}{\partial s} |q_s|_3=O(1).
\\
\end{aligned}
$$
The dependance $\underline{f}\mapsto {\partial \tilde{\varphi}^r_\frac12}/{\partial y_i}$ is continuous and 
$$
\left| \tilde{\varphi}^r_\frac12(\underline{f}+\Delta \underline{y})-
\left[\tilde{\varphi}^r_\frac12(\underline{f})+\frac{\partial \tilde{\varphi}^r_\frac12}{\partial y_1}\Delta y_1+\frac{\partial \tilde{\varphi}^r_\frac12}{\partial y_2}\Delta y_2+\frac{\partial \tilde{\varphi}^r_\frac12}{\partial y_3}\Delta y_3+\frac{\partial \tilde{\varphi}^r_\frac12}{\partial y_5}\Delta y_5\right]\right|_2\le 
$$
$$
\le O_{\underline{f}}\left( |\Delta y_1|^{1+\epsilon}+|\Delta y_2|^{1+\epsilon}+|\Delta y_3|^{1+\epsilon}+|\Delta y_5|^{1+\epsilon}\right).
$$
\end{lem}
\begin{lem}\label{partial phi r max 12 partial y} Let $\underline{f}\in D$. The maps
$$
\R \ni \Delta y_1\mapsto \tilde{\varphi}^r_\tau(\underline{f}+\Delta y_1)\in \text{Diff }^2([0,1])
$$
are differentiable, $\tau>1/2$. The derivative is given by
$$
\begin{aligned}
\frac{\partial \tilde{\varphi}^r_\tau}{\partial y_1}&= D\left(\varphi^l_{-,\theta \tau}(y_1)\right)\cdot
\frac{\partial Z_{[0, y_1(\theta \tau)]}\varphi^l_{\theta \tau}}
{\partial y_1(\theta \tau)},
\end{aligned}
$$
with norm
$$
\begin{aligned}
\left|\frac{\partial \tilde{\varphi}^r_\tau}{\partial y_1}\right|_{2}&\le 
2 D\left(\varphi^l_{-,\theta \tau}(y_1)\right) |\varphi^l_{\theta \tau}|_3
\end{aligned}.
$$
The dependance $\underline{f}\mapsto {\partial \tilde{\varphi}^r(\tau)}/{\partial y_1}$ is continuous and 
$$
\left| \tilde{\varphi}^r_\tau(\underline{f}+\Delta \underline{y})-
\left[\tilde{\varphi}^r_\tau(\underline{f})+\frac{\partial \tilde{\varphi}^r_\tau}{\partial y_1}{\Delta y_1}\right]\right|_2\le 
$$
$$
\le O_{\underline{f}}\left( |\varphi_{\theta \tau}^l|_{3+\epsilon}\left\{|\Delta y_1|^{1+\epsilon}\right\}\right).
$$
\end{lem}
Define the operator $C_r:\R^5 \to X_2$ by
$$
C_r(\Delta \underline{y})_\tau= \frac{\partial \tilde{\varphi}^r_\tau}{\partial y_1}\Delta y_1+ \frac{\partial \tilde{\varphi}^r_\tau}{\partial y_2}\Delta y_2+ \frac{\partial \tilde{\varphi}^r_\tau}{\partial y_3}\Delta y_3+  \frac{\partial \tilde{\varphi}^r_\tau}{\partial y_5}\Delta y_5.
$$

 This is a bounded operator, see Lemma \ref{partial phi r min 12 partial y}, Lemma \ref{partial phi r12 partial y} and Lemma \ref{partial phi r max 12 partial y} . We obtain the following.

\begin{prop}\label{Crderivative} The dependance $\underline{f}\mapsto C_r(\underline{f})$ is continuous and
$$
\left| \tilde{\varphi}^r(\underline{f}+\Delta \underline{y})-
\left[\tilde{\varphi}^r(\underline{f})+C_r\Delta \underline{y}\right]\right|_2
=O_{\underline{f}}(|\Delta \underline{y}|^{1+\epsilon}).
$$
The operator $C_r:\R^5 \to X_2$ is bounded. Moreover the operator $C_r:\R^4 \to X_2$ with
$$
C_r(\Delta \underline{y})_\tau= \frac{\partial \tilde{\varphi}^r_\tau}{\partial y_2}\Delta y_2+ \frac{\partial \tilde{\varphi}^r_\tau}{\partial y_3}\Delta y_3+  \frac{\partial \tilde{\varphi}^r_\tau}{\partial y_5}\Delta y_5
$$ is uniformly bounded.
\end{prop}

\subsection{The partial derivatives $D_{sl}$}\label{Dsl}
Observe that, ${\partial \tilde{\varphi}_\tau}/{\partial {\varphi}^l_{\tau'}}=0$ when $\tau'>\tau$. The following lemmas describe the nonzero partial derivatives.

\begin{lem}\label{partial phi tau=tau' partial phil} Let $\underline{f}\in D$. The maps
$$
\R \ni \Delta{\varphi}^l_\tau \mapsto \tilde{\varphi}_\tau(\underline{f}+\Delta {\varphi}^l_\tau)\in \text{Diff }^2([0,1])
$$
are linear. Namely,
$$
\tilde{\varphi}_\tau(\underline{f}+\Delta {\varphi}^l_\tau)=\tilde{\varphi}_\tau(\underline{f})+\frac{\partial \tilde{\varphi}_\tau}{\partial {\varphi}^l_{\tau}}\Delta{\varphi}^l_\tau,
$$
where
$$
\frac{\partial \tilde{\varphi}_\tau}{\partial {\varphi}^l_{\tau}}=Z_{\left[y_1(\tau),1\right]},
$$
with norm
$$
\begin{aligned}
\left|\frac{\partial \tilde{\varphi}_\tau}{\partial {\varphi}^l_{\tau}}\right|_{2}&\le 1.
\end{aligned}
$$
\end{lem}
Let $\tau>\tau'$. For $\underline\varphi\in X_3$, we define
$$
\underline\varphi_{[\tau,\tau')}=O\circ\pi_{\tau'}\circ\pi^{\tau}\left(\underline\varphi\right).
$$
By Lemma \ref{zoompartials} and Lemma \ref{evaluation} we get the following.
\begin{lem}\label{partial phi tau>tau' partial phil} Let $\underline{f}\in D$ and let $\tau>\tau'$. The maps
$$
\R \ni \Delta{\varphi}^l_{\tau'} \mapsto \tilde{\varphi}_\tau(\underline{f}+\Delta {\varphi}^l_{\tau'})\in \text{Diff }^2([0,1])
$$
are differentiable. The derivatives are given by
$$
\begin{aligned}
\frac{\partial \tilde{\varphi}_\tau}{\partial {\varphi}^l_{\tau'}}&=D\underline\varphi_{[\tau,\tau')}^l\left(\varphi^l_{\tau'}\left(y_1(\tau')\right)\right)\cdot\frac{\partial \tilde{\varphi}_\tau}{\partial y_1({\tau})}\circ\frac{\partial {\varphi}^l_{\tau'}\left(y_1({\tau'})\right)}{\partial {\varphi}^l_{\tau'}},\\
\end{aligned}
$$
with norms
$$
\begin{aligned}
\left|\frac{\partial \tilde{\varphi}_\tau}{\partial {\varphi}^l_{\tau'}}\right|_{2}&=O\left(\left|\varphi_\tau^l\right|_3S_1\right).
\end{aligned}
$$
The dependance $\underline{f}\mapsto {\partial \tilde{\varphi}_\tau}/{\partial {\varphi}^l_{\tau'}}$ is continuous and 
$$
\left| \tilde{\varphi}_\tau(\underline{f}+\Delta{\varphi}_{\tau'}^l)-
\left[\tilde{\varphi}_\tau(\underline{f})+\frac{\partial \tilde{\varphi}_\tau}{\partial {\varphi}^l_{\tau'}}\Delta {\varphi}^l_{\tau'} \right]\right|_2
= O_{\underline{f}}\left( \left|{\varphi}^l_{\tau}\right|_{3+\epsilon}\left|\Delta{\varphi}_{\tau'}^l\right|_{2}^{1+\epsilon}\right).
$$
\end{lem}
Define the operator $D_{sl}: X_2 \to X_2$ by
$$
\left(D_{sl}\left(\Delta \underline{\varphi}^l\right)\right)_\tau=\sum_{\tau'\leq\tau}\frac{\partial \tilde{\varphi}_\tau}{\partial {\varphi}^l_{\tau'}}\Delta\varphi^l_{\tau'}.
$$

 This is a uniformly bounded operator, see Lemma \ref{partial phi tau=tau' partial phil} and Lemma \ref{partial phi tau>tau' partial phil} . We obtain the following.

\begin{prop}\label{Dslderivative} The dependance $\underline{f}\mapsto D_{sl}(\underline{f})$ is continuous and
$$
\left| \tilde{\varphi}(\underline{f}+\Delta \underline{\varphi}^l)-
\left[\tilde{\varphi}(\underline{f})+D_{sl}\Delta \underline{\varphi}^l\right]\right|_2
=O_{\underline{f}}(|\Delta \underline{\varphi}^l|_2^{1+\epsilon}).
$$
Moreover, the partial derivative extends to a uniformly bounded operator $D_{sl}: X_2 \to X_2$. 
\end{prop}
\subsection{The partial derivatives $D_{lr}$}\label{Dlr}
Observe that, ${\partial \tilde{\varphi}^l_\tau}/{\partial {\varphi}^r_{\tau'}}=0$ when $\tau\leq\frac{1}{2}$ and when $\tau>\frac{1}{2}, \theta\tau\neq\tau'$. The following lemma describe the nonzero partial derivatives.

\begin{lem}\label{partial phil tau=thetatau' partial phir} Let $\underline{f}\in D$, let $\tau>\frac{1}{2}$ and $\theta\tau=\tau'$. The maps
$$
\R \ni \Delta{\varphi}^r_{\tau'} \mapsto \tilde{\varphi}^l_\tau(\underline{f}+\Delta {\varphi}^r_{\tau'})\in \text{Diff }^2([0,1])
$$
are linear. Namely,
$$
\tilde{\varphi}^l_\tau(\underline{f}+\Delta {\varphi}^r_{\tau'})=\tilde{\varphi}^l_\tau(\underline{f})+\frac{\partial \tilde{\varphi}^l_\tau}{\partial {\varphi}^r_{\tau'}}\Delta{\varphi}^r_{\tau'},
$$
where
$$
\frac{\partial \tilde{\varphi}^l_\tau}{\partial {\varphi}^r_{\tau'}}=\text{Id},
$$
with norms
$$
\begin{aligned}
\left|\frac{\partial \tilde{\varphi}^l_\tau}{\partial {\varphi}^r_{\tau'}}\right|_{2}&=1.
\end{aligned}
$$
\end{lem}
Define the operator $D_{lr}: X_2 \to X_2$ by
$$
\left(D_{lr}\left(\Delta \underline{\varphi}^r\right)\right)_\tau=\Delta\varphi^r_{\theta\tau},
$$
where $\tau>\frac{1}{2}$.
 This is a uniformly bounded operator, see Lemma \ref{partial phil tau=thetatau' partial phir}. We obtain the following.

\begin{prop}\label{Dlrderivative} The dependance $\underline{f}\mapsto D_{lr}(\underline{f})$ is continuous and
$$
\left| \tilde{\varphi}^l(\underline{f}+\Delta \underline{\varphi}^r)-
\left[\tilde{\varphi}^l(\underline{f})+D_{lr}\Delta \underline{\varphi}^r\right]\right|_2
=0.
$$
Moreover, the partial derivative extends to a uniformly bounded operator $D_{lr}: X_2 \to X_2$. 
\end{prop}
\subsection{The partial derivatives $D_{ls}$}\label{Dls}
Observe that, ${\partial \tilde{\varphi}^l_\tau}/{\partial {\varphi}_{\tau'}}=0$ when $\tau\geq\frac{1}{2}$ and when $\tau<\frac{1}{2}, \tau'<\theta\tau$. The following lemmas describe the nonzero partial derivatives.

\begin{lem}\label{partial phil thetatau=tau' partial phi} Let $\underline{f}\in D$ and let $\tau<\frac{1}{2}$. The maps
$$
\R \ni \Delta{\varphi}_{\theta\tau} \mapsto \tilde{\varphi}^l_\tau(\underline{f}+\Delta {\varphi}_{\theta\tau})\in \text{Diff }^2([0,1])
$$
are differentiable.  The derivatives are given by
$$
\begin{aligned}
\frac{\partial \tilde{\varphi}^l_\tau}{\partial {\varphi}_{\theta\tau}}&=Z_{\left[\varphi^{-1}\circ q_s^{-1}(1-S_2)(\theta\tau),1\right]}+\\&+\frac{\partial \tilde{\varphi}^l_\tau}{\partial \varphi^{-1}\circ q_s^{-1}(1-S_2)(\theta\tau)}\circ\frac{\partial {\varphi}_{\theta\tau}^{-1}\left(\varphi_{\theta\tau}\left(\varphi^{-1}\circ q_s^{-1}(1-S_2)(\theta\tau)\right)\right)}{\partial {\varphi}_{\theta\tau}}
\end{aligned},
$$
with norms
$$
\begin{aligned}
\left|\frac{\partial \tilde{\varphi}^l_\tau}{\partial {\varphi}_{\theta\tau}}\right|_{2}&=O(S_2).
\end{aligned}
$$
The dependance $\underline{f}\mapsto {\partial \tilde{\varphi}^l_\tau}/{\partial {\varphi}_{\theta\tau}}$ is continuous and 
$$
\left| \tilde{\varphi}^l_\tau(\underline{f}+\Delta{\varphi}_{\theta\tau})-
\left[\tilde{\varphi}^l_\tau(\underline{f})+\frac{\partial \tilde{\varphi}^l_\tau}{\partial {\varphi}_{\theta\tau}}\Delta {\varphi}_{\theta\tau} \right]\right|_2
= O_{\underline{f}}\left( \left|{\varphi}_{\theta\tau}\right|_{3+\epsilon}\left|\Delta\underline{\varphi}\right|_{2}^{1+\epsilon}\right).
$$
\end{lem}

By Lemma \ref{zoompartials} and Lemma \ref{evaluationinv} we get the following.
\begin{lem}\label{partial phil thetatau<tau' partial phi} Let $\underline{f}\in D$, $\tau<\frac{1}{2}$ and $\theta\tau<\tau'$. The maps
$$
\R \ni \Delta{\varphi}_{\tau'} \mapsto \tilde{\varphi}^l_\tau(\underline{f}+\Delta {\varphi}_{\tau'})\in \text{Diff }^2([0,1])
$$
are differentiable. The derivatives are given by
$$
\begin{aligned}
\frac{\partial \tilde{\varphi}^l_\tau}{\partial {\varphi}_{\tau'}}&=\frac{1}{D\underline\varphi_{[\tau',\theta\tau)}\left(\varphi_{\theta\tau}\left(\varphi^{-1}\circ q_s^{-1}(1-S_2)(\theta\tau)\right)\right)}\cdot\\&\cdot\frac{\partial \tilde{\varphi}^l_\tau}{\partial \varphi^{-1}\circ q_s^{-1}(1-S_2)(\theta\tau)}\circ\frac{\partial {\varphi}_{\tau'}^{-1}\left(\varphi_{\tau'}\left(\varphi^{-1}\circ q_s^{-1}(1-S_2)(\tau')\right)\right)}{\partial {\varphi}_{\tau'}},\\
\end{aligned}
$$
with norms
$$
\begin{aligned}
\left|\frac{\partial \tilde{\varphi}^l_\tau}{\partial {\varphi}_{\tau'}}\right|_{2}&=O\left(\left|\varphi_{\theta\tau}\right|_3S_2\right).
\end{aligned}
$$
The dependance $\underline{f}\mapsto {\partial \tilde{\varphi}^l_\tau}/{\partial {\varphi}_{\tau'}}$ is continuous and 
$$
\left| \tilde{\varphi}^l_\tau(\underline{f}+\Delta{\varphi}_{\tau'})-
\left[\tilde{\varphi}^l_\tau(\underline{f})+\frac{\partial \tilde{\varphi}^l_\tau}{\partial {\varphi}_{\tau'}}\Delta {\varphi}_{\tau'} \right]\right|_2
= O_{\underline{f}}\left( \left|{\varphi}_{\theta\tau}\right|_{3+\epsilon}\left|\Delta{\varphi}_{\tau'}\right|_{2}^{1+\epsilon}\right).
$$
\end{lem}
Define the operator $D_{ls}: X_2 \to X_2$ by
$$
\left(D_{ls}\left(\Delta \underline{\varphi}\right)\right)_\tau=\sum_{\tau'>\theta\tau,\\\tau<\frac{1}{2}}\frac{\partial \tilde{\varphi}^l_\tau}{\partial {\varphi}_{\tau'}}\Delta\varphi_{\tau'}.
$$

 This is a uniformly bounded operator, see Lemma \ref{partial phil thetatau=tau' partial phi} and Lemma \ref{partial phil thetatau<tau' partial phi} . We obtain the following.

\begin{prop}\label{Dlsderivative} The dependance $\underline{f}\mapsto D_{ls}(\underline{f})$ is continuous and
$$
\left| \tilde{\varphi}^l(\underline{f}+\Delta \underline{\varphi})-
\left[\tilde{\varphi}^l(\underline{f})+D_{ls}\Delta \underline{\varphi}\right]\right|_2
=O_{\underline{f}}(|\Delta \underline{\varphi}|_2^{1+\epsilon}).
$$
Moreover, the partial derivative extends to a uniformly bounded operator 
$D_{ls}: X_2 \to X_2$.
\end{prop}
\subsection{The partial derivatives $D_{rs}$}\label{Drs}
Observe that, ${\partial \tilde{\varphi}^r_\tau}/{\partial {\varphi}_{\tau'}}=0$ when $\tau\geq\frac{1}{2}$ and when $\tau<{1}/{2}, \tau'<\theta\tau$. The following lemmas describe the nonzero partial derivatives.

\begin{lem}\label{partial phir thetatau=tau' partial phi} Let $\underline{f}\in D$ and let $\tau<\frac{1}{2}$. The maps
$$
\R \ni \Delta{\varphi}_{\theta\tau} \mapsto \tilde{\varphi}^r_\tau(\underline{f}+\Delta {\varphi}_{\theta\tau})\in \text{Diff }^2([0,1])
$$
are differentiable.  The derivatives are given by
$$
\begin{aligned}
\frac{\partial \tilde{\varphi}^r_\tau}{\partial {\varphi}_{\theta\tau}}&=Z_{\left[0,\varphi^{-1}\circ q_s^{-1}(S_1S_2S_3)(\theta\tau)\right]}+\\&+\frac{\partial \tilde{\varphi}^r_\tau}{\partial \varphi^{-1}\circ q_s^{-1}(S_1S_2S_3)(\theta\tau)}\circ\frac{\partial {\varphi}_{\theta\tau}^{-1}\left(\varphi_{\theta\tau}\left(\varphi^{-1}\circ q_s^{-1}(S_1S_2S_3)(\theta\tau)\right)\right)}{\partial {\varphi}_{\theta\tau}},
\end{aligned}
$$
with norms
$$
\begin{aligned}
\left|\frac{\partial \tilde{\varphi}^r_\tau}{\partial {\varphi}_{\theta\tau}}\right|_{2}&=O(q_s^{-1}\left(S_1S_2S_3\right)).
\end{aligned}
$$
The dependance $\underline{f}\mapsto {\partial \tilde{\varphi}^r_\tau}/{\partial {\varphi}_{\theta\tau}}$ is continuous and 
$$
\left| \tilde{\varphi}^r_\tau(\underline{f}+\Delta{\varphi}_{\theta\tau})-
\left[\tilde{\varphi}^r_\tau(\underline{f})+\frac{\partial \tilde{\varphi}^r_\tau}{\partial {\varphi}_{\theta\tau}}\Delta {\varphi}_{\theta\tau} \right]\right|_2
= O_{\underline{f}}\left( \left|{\varphi}_{\theta\tau}\right|_{3+\epsilon}\left|\Delta\underline{\varphi}\right|_{2}^{1+\epsilon}\right).
$$
\end{lem}

By Lemma \ref{zoompartials} and Lemma \ref{evaluationinv} we get the following.
\begin{lem}\label{partial phir thetatau<tau' partial phi} Let $\underline{f}\in D$, let $\tau<\frac{1}{2}$ and $\theta\tau<\tau'$. The maps
$$
\R \ni \Delta{\varphi}_{\tau'} \mapsto \tilde{\varphi}^r_\tau(\underline{f}+\Delta {\varphi}_{\tau'})\in \text{Diff }^2([0,1])
$$
are differentiable. The derivatives are given by
$$
\begin{aligned}
\frac{\partial \tilde{\varphi}^r_\tau}{\partial {\varphi}_{\tau'}}&=\frac{1}{D\underline\varphi_{[\tau',\theta\tau)}\left(\varphi_{\theta\tau}\left(\varphi^{-1}\circ q_s^{-1}(S_1S_2S_3)(\theta\tau)\right)\right)}\cdot\\&\cdot\frac{\partial \tilde{\varphi}^r_\tau}{\partial \varphi^{-1}\circ q_s^{-1}(S_1S_2S_3)(\theta\tau)}\circ\frac{\partial {\varphi}_{\tau'}^{-1}\left(\varphi_{\tau'}\left(\varphi^{-1}\circ q_s^{-1}(S_1S_2S_3)(\tau')\right)\right)}{\partial {\varphi}_{\tau'}},\\
\end{aligned}
$$
with norms
$$
\begin{aligned}
\left|\frac{\partial \tilde{\varphi}^r_\tau}{\partial {\varphi}_{\tau'}}\right|_{2}&=O\left(\left|\varphi_{\theta\tau}\right|_3q_s^{-1}\left(S_1S_2S_3\right)\right).
\end{aligned}
$$
The dependance $\underline{f}\mapsto {\partial \tilde{\varphi}^r_\tau}/{\partial {\varphi}_{\tau'}}$ is continuous and 
$$
\left| \tilde{\varphi}^r_\tau(\underline{f}+\Delta{\varphi}_{\tau'})-
\left[\tilde{\varphi}^r_\tau(\underline{f})+\frac{\partial \tilde{\varphi}^r_\tau}{\partial {\varphi}_{\tau'}}\Delta {\varphi}_{\tau'} \right]\right|_2
= O_{\underline{f}}\left( \left|{\varphi}_{\theta\tau}\right|_{3+\epsilon}\left|\Delta{\varphi}_{\tau'}\right|_{2}^{1+\epsilon}\right).
$$
\end{lem}
Define the operator $D_{rs}: X_2 \to X_2$ by
$$
\left(D_{rs}\left(\Delta \underline{\varphi}\right)\right)_\tau=\sum_{\tau'>\theta\tau,\\\tau<\frac{1}{2}}\frac{\partial \tilde{\varphi}^r_\tau}{\partial {\varphi}_{\tau'}}\Delta\varphi_{\tau'}.
$$

 This is a uniformly bounded operator, see Lemma \ref{partial phir thetatau=tau' partial phi} and Lemma \ref{partial phir thetatau<tau' partial phi} . We obtain the following.

\begin{prop}\label{Drsderivative} The dependance $\underline{f}\mapsto D_{rs}(\underline{f})$ is continuous and
$$
\left| \tilde{\varphi}^r(\underline{f}+\Delta \underline{\varphi})-
\left[\tilde{\varphi}^r(\underline{f})+D_{rs}\Delta \underline{\varphi}\right]\right|_2
=O_{\underline{f}}(|\Delta \underline{\varphi}|_2^{1+\epsilon}).
$$
Moreover, the partial derivative extends to a uniformly bounded operator 
$D_{rs}: X_2 \to X_2$.
\end{prop}
\subsection{The partial derivatives $D_{rl}$}\label{Drl}
Observe that, ${\partial \tilde{\varphi}^r_\tau}/{\partial {\varphi}^l_{\tau'}}=0$ when $\tau\leq{1}/{2}$ and when $\tau>{1}/{2}$ with $\tau'>\theta\tau$. The following lemmas describe the nonzero partial derivatives.

\begin{lem}\label{partial phir thetatau=tau' partial phil} Let $\underline{f}\in D$ and let $\tau>\frac{1}{2}$. The maps
$$
\R \ni \Delta{\varphi}^l_{\theta\tau} \mapsto \tilde{\varphi}^r_\tau(\underline{f}+\Delta {\varphi}^l_{\theta\tau})\in \text{Diff }^2([0,1])
$$
are linear. Namely,
$$
\tilde{\varphi}^r_\tau(\underline{f}+\Delta {\varphi}^l_{\theta\tau})=\tilde{\varphi}^r_\tau(\underline{f})+\frac{\partial \tilde{\varphi}^r_\tau}{\partial {\varphi}^l_{\theta\tau}}\Delta{\varphi}^l_{\theta\tau},
$$
where
$$
\frac{\partial \tilde{\varphi}^r_\tau}{\partial {\varphi}^l_{\theta\tau}}=Z_{\left[0,y_1(\theta\tau)\right]},
$$
with norms
$$
\begin{aligned}
\left|\frac{\partial \tilde{\varphi}^r_\tau}{\partial {\varphi}^l_{\theta\tau}}\right|_{2}&=O(S_1).
\end{aligned}
$$
\end{lem}
By Lemma \ref{zoompartials} and Lemma \ref{evaluation} we get the following.
\begin{lem}\label{partial phir thetatau>tau' partial phil} Let $\underline{f}\in D$ and let $\tau>\frac{1}{2}$ with $\theta\tau>\tau'$. The maps
$$
\R \ni \Delta{\varphi}^l_{\tau'} \mapsto \tilde{\varphi}^r_\tau(\underline{f}+\Delta {\varphi}^l_{\tau'})\in \text{Diff }^2([0,1])
$$
are differentiable. The derivatives are given by
$$
\begin{aligned}
\frac{\partial \tilde{\varphi}^r_\tau}{\partial {\varphi}^l_{\tau'}}&=D\underline\varphi_{[\theta\tau,\tau')}^l\left(\varphi^l_{\tau'}\left(y_1(\tau')\right)\right)\cdot\frac{\partial \tilde{\varphi}^r_\tau}{\partial y_1({\theta\tau})}\circ\frac{\partial {\varphi}^l_{\tau'}\left(y_1({\tau'})\right)}{\partial {\varphi}^l_{\tau'}},\\
\end{aligned}
$$
with norms
$$
\begin{aligned}
\left|\frac{\partial \tilde{\varphi}^r_\tau}{\partial {\varphi}^l_{\tau'}}\right|_{2}&=O\left(\left|\varphi_{\theta\tau}^l\right|_3S_1\right).
\end{aligned}
$$
The dependance $\underline{f}\mapsto {\partial \tilde{\varphi}^r_\tau}/{\partial {\varphi}^l_{\tau'}}$ is continuous and 
$$
\left| \tilde{\varphi}^r_\tau(\underline{f}+\Delta{\varphi}_{\tau'}^l)-
\left[\tilde{\varphi}^r_\tau(\underline{f})+\frac{\partial \tilde{\varphi}^r_\tau}{\partial {\varphi}^l_{\tau'}}\Delta {\varphi}^l_{\tau'} \right]\right|_2
= O_{\underline{f}}\left( \left|{\varphi}^l_{\theta\tau}\right|_{3+\epsilon}\left|\Delta{\varphi}_{\tau'}^l\right|_{2}^{1+\epsilon}\right).
$$
\end{lem}
Define the operator $D_{rl}: X_2 \to X_2$ by
$$
\left(D_{rl}\left(\Delta \underline{\varphi}^l\right)\right)_\tau=\sum_{\tau'\leq\theta\tau}\frac{\partial \tilde{\varphi}^r_\tau}{\partial {\varphi}^l_{\tau'}}\Delta\varphi^l_{\tau'}.
$$

 This is a uniformly bounded operator, see Lemma \ref{partial phir thetatau=tau' partial phil}, and Lemma \ref{partial phir thetatau>tau' partial phil} . We obtain the following.

\begin{prop}\label{Drlderivative} The dependance $\underline{f}\mapsto D_{rl}(\underline{f})$ is continuous and
$$
\left| \tilde{\varphi}^r(\underline{f}+\Delta \underline{\varphi}^l)-
\left[\tilde{\varphi}^r(\underline{f})+D_{rl}\Delta \underline{\varphi}^l\right]\right|_2
=O_{\underline{f}}(|\Delta \underline{\varphi}^l|_2^{1+\epsilon}).
$$
Moreover, the partial derivative extends to a uniformly bounded operator 
$D_{rl}: X_2 \to X_2$.
\end{prop}


\begin{thebibliography}{9}


\bibitem{CLM05}{De Carvalho, A., Lyubich, M. and Martens, M.}, {Renormalization in the H\'enon family. I. Universality but non-rigidity}, {J. Stat. Phys.}, {121}, {2005}, {5-6}, {611--669}.

\bibitem{Davie} {Davie, A. M.}, {Period Doubling for $\mathcal C^{2+\epsilon}$ Mappings}, {Commun. Math. Phys.}, {176}, {1996}, {261-272}.


\bibitem{dFdM}{de Faria, E. and de Melo, W.}, {Rigidity of critical circle mappings. {II}}, {J. Amer. Math. Soc.}, {13}, {2000}, {2}, {343--370}.


\bibitem{dFdMP}{de Faria, E., de Melo, W. and Pinto, A.}, {Global hyperbolicity of renormalization for {$C^r$} unimodal mappings}, {Ann. of Math. (2)}, {164}, {2006}, {3}, {731--824}.
		

\bibitem{GJM}{Gaidashev, D., Johnson, T. and Martens, M.}, {Rigidity for infinitely renormalizable area-preserving maps}, {Duke Math. J.}, {165}, {2016}, {1}, {129--159}.


\bibitem{5aut} {Graczyk, J., Jonker, L. B., \'Swi\c{a}tek, G., Tangerman, F. M. and Veerman, J. J. P.},
{Differentiable Circle Maps with a Flat Interval}, {Commun. Math. Phys.}, {173}, {1995}, {599-622}.

\bibitem{GMdM}{Guarino, P., Martens, M. and de Melo, W.}, {Rigidity of critical circle maps}, {arXiv:1511.027923}.
\bibitem{GdM}{Guarino, P. and de Melo, W.}, {Rigidity of smooth critical circle maps}, {J. Eur. Math. Soc.}, {19}, {2017}, {1729-1783}.



\bibitem{HMT}{Hazard, P., Martens, M. and Tresser, C.}, {Infinitely many moduli of stability at the dissipative boundary of chaos}, 
{Trans. Amer. Math. Soc.}, {370}, {2018}, { 1}, {27–51}.  

\bibitem{H79}{Herman, M.}, {Sur la conjugaison diff\'erentiable des diff\'eomorphismes du cercle \`a des rotations},{Inst. Hautes \'Etudes Sci. Publ. Math.}, {49}, {1979}, {5--233}.


\bibitem{KK14}{Khanin, K. and Koci\'c, S.}, {Renormalization conjecture and rigidity theory for circle diffeomorphisms with breaks}, {Geom. and Func. Anal.}, {24}, {6}, {2014}, {2002--2028}.





\bibitem{KT} {Khanin, K. and Teplinsky, A.}, {Renormalization horseshoe and rigidity for circle diffeomorphisms with breaks}, {Comm. Math. Phys.}, {320}, {2013}, {2}, {347--377}.

\bibitem{KT1}{Khanin, K. and Teplinsky, A.}, {Robust rigidity for circle diffeomorphisms with singularities}, {Invent. Math.}, {169}, {2007}, {1}, {193--218}.
\bibitem{Lan}{Lanford, III, O. E.}, {A computer-assisted proof of the {F}eigenbaum conjectures}, {Bull. Amer. Math. Soc. (N.S.)}, {6}, {1982}, {3}, {427--434}.



\bibitem{Lyu1}{Lyubich, M.}, {Feigenbaum-{C}oullet-{T}resser universality and {M}ilnor's hairiness conjecture}, {Ann. of Math. (2)}, {149},{1999}, {2}, {319--420}.

\bibitem{Lyu2} {Lyubich, M.}, {Almost every real quadratic map is either regular or stochastic}, {Ann. of Math. (2)}, {156}, {2002}, {1}, {1--78}.

\bibitem{ML}{Lyubich, M. and Martens, M.}, {Probabilistic Universality in two-dimensional Dynamics}, {arxiv.org:1106.5067}.

\bibitem{MilnorLyubic} {Lyubich, M. and Milnor, J.}, {The {F}ibonacci unimodal map}, {J. Amer. Math. Soc.}, {6}, {1993}, {2}, {425--457}.



\bibitem{M}{Martens, M.}, {The periodic points of renormalization}, {Ann. of Math. (2)}, {147}, {1998}, {3}, {543--584}.
\bibitem{MPW}{Martens, M., Palmisano, L. and Winckler, B.}, {The Rigidity Conjecture}, {Indagationes Mathematicae.}, {29}, {2018}, {825-830}.
\bibitem{M-vS-dM-M} {Martens, M., van Strien, S. and de Melo, W.}, {On Cherry flows}, {Ergodic Theory Dynam. Systems}, {10}, {1990}, {3}, {531-554}.
\bibitem{MW}{Martens, M. and Winckler, B.}, {Instability of Renormalization}, {arXiv:1609.04473}.
\bibitem{Mc} {McMullen, C. T.}, {Renormalization and 3-manifolds which fiber over the circle}, {Annals of Mathematics Studies},{142}, {Princeton University Press, Princeton, NJ},  {1996}, {x+253}.






















\bibitem{dMP}{de Melo, W. and Pinto, A. A.}, {Rigidity of $C^2$ infinitely renormalizable unimodal maps}, {Comm. Math. Phys.}, {208}, {1999}, {1}, {91--105}.
\bibitem{dmvs} {de Melo, W. and van Strien, S.}, { One-dimensional Dynamics}, Springer-Verlag, New York, {1993}.
\bibitem{Mo}{Mostow, G. D.}, {Quasi-conformal mappings in {$n$}-space and the rigidity of  hyperbolic space forms}, {Inst. Hautes \'Etudes Sci. Publ. Math.}, {34}, {1968}, {53--104}.
  
\bibitem{P1} {Palmisano, L.}, {A phase transition for circle maps and {C}herry flows}, {Comm. Math. Phys.}, {321}, {2013}, {1}, {135--155}.

\bibitem{P3} {Palmisano, L.}, {Quasi-symmetric conjugacy for circle maps with a flat interval}, {Ergodic Theory Dynam. Systems}(2), {39}, {2019}, {425–445}.
\bibitem{P2} {Palmisano, L.}, {Unbounded regime for circle maps with a flat interval}, {Discrete Contin. Dyn. Syst.}, {35}, {2015}, {5}, {2099--2122}.
\bibitem{S}{Sullivan, D.}, {Bounds, quadratic differentials, and renormalization conjectures}, {American {M}athematical {S}ociety centennial publications,   {V}ol. {II} ({P}rovidence, {RI}, 1988)},{417--466},{Amer. Math. Soc., Providence, RI}, {1992}.
\bibitem{Y1}{Yampolsky, M.}, {Hyperbolicity of renormalization of critical circle maps}, {Publ. Math. Inst. Hautes \'Etudes Sci.}, {96}, {2002}, {1--41}.
\bibitem{Y} {Yampolsky, M.}, {Renormalization horseshoe for critical circle maps}, {Comm. Math. Phys.}, {240}, {2003}, {1-2}, {75--96}.
\bibitem{Yo}{Yoccoz, J.-C.}, {Conjugaison diff\'erentiable des diff\'eomorphismes du cercle dont le nombre de rotation v\'erifie une condition diophantienne}, {Ann. Sci. \'Ecole Norm. Sup. (4)}, {17}, {1984}, {3}, {333--359}.
  

     
    
\end{thebibliography}
\end{document}